\newlength\fullwidth
\numberwithin{equation}{section}
\DeclareMathSymbol{\leqslant}{\mathalpha}{AMSa}{"36} 
\DeclareMathSymbol{\geqslant}{\mathalpha}{AMSa}{"3E} 
\DeclareMathSymbol{\eset}{\mathalpha}{AMSb}{"3F}     
\renewcommand{\leq}{\;\leqslant\;}                   
\renewcommand{\geq}{\;\geqslant\;}                   
\newcommand{\maxtwo}[2]{\max_{\substack{#1 \\ #2}}} 
\newcommand{\suptwo}[2]{\sup_{\substack{#1 \\ #2}}} 
\renewcommand{\b}{\beta}
\def\1{\ifmmode {1\hskip -3pt \rm{I}} \else {\hbox {$1\hskip -3pt \rm{I}$}}\fi}
\newcommand{\var}{\operatorname{Var}}
\newcommand{\si}{\sigma }
\newcommand{\D}{\Delta}
\renewcommand{\b}{\beta}
\renewcommand{\l}{\lambda}
\renewcommand{\l}{\lambda}
\renewcommand{\a}{\alpha}
\renewcommand{\d}{\delta}
\renewcommand{\t}{\tau}
\newcommand{\g}{\gamma}
\newcommand{\e}{\varepsilon}
\renewcommand{\r}{\rho}
\renewcommand{\O}{\Omega}
\renewcommand{\epsilon}{\varepsilon}
\newcommand{\tc}{\thinspace |\thinspace}
\newtheorem{theorem}{Theorem}[section]
\newtheorem{maintheorem}{Theorem}
\newtheorem{lemma}[theorem]{Lemma}
\newtheorem{proposition}[theorem]{Proposition}
\newtheorem{corollary}[theorem]{Corollary}
\newtheorem{maincorollary}{Corollary}
\newtheorem{remark}[theorem]{Remark}
\newtheorem{definition}[theorem]{Definition}
\newcommand{\cA}{\ensuremath{\mathcal A}}
\newcommand{\cG}{\ensuremath{\mathcal G}}
\newcommand{\cH}{\ensuremath{\mathcal H}}
\newcommand{\cL}{\ensuremath{\mathcal L}}
\newcommand{\cO}{\ensuremath{\mathcal O}}
\newcommand{\cP}{\ensuremath{\mathcal P}}
\newcommand{\cZ}{\ensuremath{\mathcal Z}}
\newcommand{\bbE}{{\ensuremath{\mathbb E}} }
\newcommand{\bbL}{{\ensuremath{\mathbb L}} }
\newcommand{\bbN}{{\ensuremath{\mathbb N}} }
\newcommand{\bbP}{{\ensuremath{\mathbb P}} }
\newcommand{\bbR}{{\ensuremath{\mathbb R}} }
\newcommand{\bbZ}{{\ensuremath{\mathbb Z}} }
\newcommand{\wt}{\widetilde }
\newcommand{\IND}{{\bf 1}}
\newcommand{\be}{\boldsymbol \e}
\let\a=\alpha \let\b=\beta   \let\d=\delta  \let\e=\varepsilon
 \let\g=\gamma     \let\k=\kappa  \let\l=\lambda
\let\r=\rho  \let\s=\sigma \let\t=\tau   
\let\D=\Delta      
\let\O=\Omega
\renewcommand{\le}{\leq}
\title{Multi-level pinning problems for random walks and self-avoiding lattice
  paths}
\author{Pietro Caputo, Fabio Martinelli and Fabio Lucio Toninelli}
\address{Dipartimento di Matematica e Fisica, Universit\`a Roma
  Tre, Largo S. Murialdo 1, 00146 Roma, Italy}
\email{caputo@mat.uniroma3.it, martin@mat.uniroma3.it}
\address{Universit\'e de Lyon, CNRS and Institut Camille Jordan, Universit\'e Lyon 1,
    43 bd
 du 11 novembre 1918, 69622 Villeurbanne, France}
\email{toninelli@math.univ-lyon1.fr}
\thanks{This work was partially supported by the Marie Curie
  IEF Action ``DMCP- Dimers, Markov chains and Critical Phenomena'',
  grant agreement n.  621894}
\begin{document}
\begin{abstract}
We consider a generalization of the classical pinning problem for
integer-valued random walks conditioned to stay non-negative. More
specifically,  we take 
pinning potentials of the form $\sum_{j\geq 0}\e_j N_j$, where $N_j$ is
the number of visits to the state $j$ and $\{\e_j\}$ is
a non-negative sequence. Partly motivated by similar problems for low-temperature
contour models in statistical physics, we aim at finding a sharp characterization of the
threshold of the wetting transition, especially in the regime where
the variance $\s^2$ of the single step of the random walk is small.
Our main result says that, for natural choices of the pinning sequence $\{\e_j\}$,
localization (respectively delocalization)
occurs if $\s^{-2}\sum_{ j\geq0}(j+1)\e_j\geq\d^{-1}$ (respectively $\le \d$),
for some universal $\d <1$. Our
finding is reminiscent of the classical Bargmann-Jost-Pais criteria
for the absence of bound states for the radial Schr\"odinger
equation. The core of the proof is a recursive argument to bound the free energy
of the model. Our approach is rather robust, which allows us to obtain similar results in the 
case where the random walk trajectory is replaced by a self-avoiding path $\g$
in $\bbZ^2$ 
with weight $\exp(-\beta |\g|)$, $|\g|$ being the
length of the path and $\b>0$ a large enough parameter. This generalization is directly relevant for
applications to the above mentioned contour models.       
\end{abstract}

\keywords{Random walks, pinning, entropic repulsion, contour models}
\subjclass[2010]{60K35, 82B41, 82C24}
\maketitle

\section{Introduction and motivations}
Consider a one-dimensional integer-valued symmetric random walk starting at 
zero, conditioned to
stay non-negative. If the walk
has a reward $\e>0$ for each return to zero, it is a classical fact
that there exists  a critical
value $\e_c$ such that for $\e>\e_c$ the random walk has a positive
density of returns to the origin while for $\e<\e_c$  entropic
repulsion prevails and the density of returns is zero; see e.g.\ \cite{Giacomin}
and references therein. This is often called a wetting transition \cite{Fisher}. The critical parameter $\e_c$ depends
crucially on the single-step variance $\s^2$; in simple examples such
as the symmetric walk with increments in $\{-1,0,+1\}$ one finds that
$\e_c$ scales linearly in $\s^2$ as $\s^2\to 0$ \cite{IsoYos}.

In this work we consider a natural generalization where the 
pinning at the origin is replaced by a long range pinning potential
 $\be=\{\e_j\}_{j\geq 0}$, where $\e_j\geq 0$ is the reward for a visit to
the state $j\geq 0$. To be specific, for a trajectory $\g$ of length
$L$, define 
\begin{align}\label{model}
\Phi(\g)= \sum_{j=0}^\infty\e_j
N_j(\g),
\end{align} where $N_j(\g)$ is the number of visits to state $j$. Define also the free energy
\[  
f(\be)= \lim_{L\to \infty}\frac 1L
\log\bbE^+_{0,L}\big(e^{\Phi}\big),
\]
where $\bbE_{0,L}^+(\cdot)$ stands for the expectation w.r.t.\ to the path
measure conditioned to $\g\geq 0$ and $\g_0=\g_L=0$. The existence of
the limit follows by sub-additivity. 
With this notation the localized (resp.\ delocalized)
phase is characterized by $f(\be)>0$ (resp.\ $f(\be)=0$). Under mild assumptions on the random walk kernel
and on the pinning sequence $\be$, we prove that the wetting
transition occurs at a critical value
$\rho_c$ of
the ratio 
\[
\rho:=\frac{1}{\s^2}\sum_{j=0}^\infty (j+1)\e_j 
\] 
and that $\rho_c\in (a,b)$ for
universal constants $a,b>0$. 

 As far as we know, this is the first analysis of the wetting transition for a multi-level pinning problem of the general form \eqref{model}. We refer to \cites{Caravennaet,Sohier} for previous  studies of certain specific models of random walks with pinning on several layers.  
It is interesting  to note the analogy between our condition for
delocalization and  the classical Bargmann-Jost-Pais
\cites{Jost,Bargman,Solomyak} criteria
for the absence of bound states for the radial Schroedinger
equation; cf.\ Remark \ref{Jost} below for a discussion of this point.

The most challenging part of the proof is to show delocalization for $\rho$
small. That requires establishing an upper bound on the
partition function 
\[
Z^{\Phi,+}_{0,L}:= \sum_{\g\geq 0} w(\g)e^{\Phi(\g)},
\]
where the sum runs over non-negative trajectories returning to the origin at time
$L$ and $w(\g)$ is the probability of $\g$. 
Using the strategy outlined below we prove that 
\begin{equation}\label{xx}
Z^{\Phi,+}_{0,L}\leq C\,Z_{0,L},
\end{equation}
where $C$ is a universal constant and $Z_{0,L}= \sum_{\g} w(\g)$, the sum being over all  trajectories returning to the origin at time $L$. Clearly,
\[
\bbE^+_{0,L}\big(e^{\Phi}\big) = \frac{Z^{\Phi,+}_{0,L}}{Z_{0,L}}\, \bbP_{0,L}(\g\geq 0)^{-1}
\]
where $\bbP_{0,L}(\g\geq 0)$ is the probability that a path returning to the origin after $L$ steps remains non-negative. Well known bounds show that $\bbP_{0,L}(\g\geq 0)^{-1}=O(L)$, so that the estimate \eqref{xx} establishes delocalization. 

To prove \eqref{xx} we argue as follows. The first step is to
decouple the problem into a collection of independent 
pinning problems, one for each height level $j=0,1,\dots$. 
More precisely, let $\r_j:= (\rho \si^2)^{-1}(j+1)\e_j$ so that
$\sum_{j\geq 0}\r_j=1$. Then, Jensen's inequality implies
that
\[
 Z^{\Phi,+}_{0,L} =\sum_{\g\geq 0} w(\g)e^{\sum_{j=0}^\infty\e_j N_j(\g)}\le \sum_{j=0}^\infty \r_j  
 Z_{0,L}^{+,\k_j}\,,
\] 
where $ Z_{0,L}^{+,\k_j}=\sum_{\g\geq 0} w(\g)   e^{\rho
   \si^2N_j(\g)/(j+1)}$
is the partition function of a
random walk returning to the origin after $L$ steps, pinned at
height $j$ with pinning strength $\k_j= \rho
   \si^2/(j+1)$. 

The next step is to show that, if the parameter $\rho$ is small enough, then 
\emph{uniformly} in the law of the random walk and in the height $j$,
one has $Z_{0,L}^{+,\k_j}\leq C Z_{0,L} $. Using $\sum_{j}\rho_j=1$, this bound implies \eqref{xx}.

The main idea of the proof goes  as follows. With a natural 
inductive argument we 
show that, for all $L$ larger than a
critical ``diffusive'' scale $L_c(j)\sim (j+1)^2/\s^2$, one has 
$Z_{0,L}^{+,\k_j}\le C Z_{0,L}$, provided that the same holds for $L=L_c(j)$. 
The base case of the induction is solved by a fine
analysis based on careful local limit theorem estimates. It is
only at this stage, that is when  $L=L_c(j)$, that we need to take the parameter $\rho$ small
enough.

The method outlined above is rather accurate in finding the threshold for the wetting transition. In this respect, we remark that a direct ``energy vs.\ entropy'' argument such as the one used in \cite{CV} would
fail to capture the right dependence in the parameter $\si^2$ for instance.  Moreover, our method is robust enough to admit an extension to the setting of self-avoiding paths, as we discuss below.   

One of the motivations for this work stems from 
 the mathematical analysis of
contour models arising in 
low-temperature two-dimensional spin systems and related interface
models. In this context the random walk is
replaced by a self-avoiding and weakly self-interacting  random lattice path
with an effective diffusion constant $\s^2\sim e^{-\b}$, where $\b$ is
the inverse temperature. 
Here the analog of the pinning strength $\e_j$ above 
typically decays
like $e^{-\a \b(j +1) }$ for $j\to\infty$ with $\a > 1$. Whether such long range
potential is able to localize the contour is a key question in the
analysis of large deviations problems such as e.g.\ the Wulff
construction for the 2D Ising model \cite{DKS} and for the $(2+1)$-dimensional
Solid-on-Solid model \cite{CLMST}. We refer the interested
reader to \cite{IST} for more details.     

Our approach can be applied in principle to this setting.
In Section \ref{paths} below we work out  the details of this extension
in the simplified case where the
self-avoiding path has no additional  self-interaction.  
The general case has been recently solved in \cite{IST}, with stronger
results, with a
very different approach. The main
idea of
\cite{IST } consists in constructing, out of the self-interacting
contour path, an effective
random walk together with a renewal structure and then prove delocalization for the latter.

\section{Models and results}

\subsection{Random walks and pinning}
\label{RW}
We consider a class of symmetric and irreducible random walk kernels on $\bbZ$ with  variance $\si^2$. Since we are interested in the regime of small $\si^2$, we will make the assumption $\si^2\in(0, \tfrac12]$.
\begin{definition}\label{def:rw}
For a fixed constant $c_0\in(0,1]$ and $\si^2\in(0,\tfrac12]$, we call $\cP(c_0,\si^2)$ or simply $\cP(\si^2)$
the set of all symmetric probabilities $p:\bbZ\mapsto [0,1]$ with 
variance $\si^2$
such that 
\begin{equation}
p(1)\geq \tfrac12 \,c_0\,\si^2\quad \text{and}\quad \sum_{k\in\bbZ}|k|^3p(k)\leq c_0^{-1}\si^2.
\label{pps}
\end{equation}
 \end{definition}
In particular, any $p\in\cP(\si^2)$ satisfies $p(0)\geq 1-\si^2\geq
\tfrac12$, and the associated random walk is irreducible. Below we shall restrict ourselves to random walk kernels in the class $\cP(\si^2)$. 
While we do not believe this to be the largest possible class for our results to hold, the above assumptions turn out to be very convenient from the technical point of view.  At the same time, they 
include a wide range of interesting models.  
Two key examples to keep in mind are:
\begin{enumerate}[1)]
\item the symmetric nearest neighbor walk with $p(\pm1)=\tfrac{\si^2}{2}$, $p(0)=1-\si^2$ and $p(k)=0$ otherwise, referred to as the {\em binomial walk}, and 

\item the geometric walk with $p(k)=\tfrac1{Z_\b}e^{-\b |k|}$,
  $k\in\bbZ$, where $\b\in(0,\infty)$ is the unique positive solution of \begin{equation}\label{sigmabeta}
\si^2 = \frac{2e^{-\b }}{(1-e^{-\b })^2},
\end{equation} 
and $Z_\b=(1+e^{-\b })(1-e^{-\b })^{-1}$.  We refer to this as the {\em SOS
    walk} at inverse temperature $\b$, from its relation with the so-called Solid-On-Solid model. 
\end{enumerate}

\bigskip
We call $\bbP_i$ the law on trajectories of the random walk starting at $i\in\bbZ$.
Let $$\O_{0,L}= \{\g=(\g_0,\dots,\g_L):\; \g_i\in\bbZ,\, \g_0=\g_L=0\},$$ denote the set of trajectories
which start and end at zero, and define the partition function 
\begin{equation}\label{pfo}
Z_{0,L} = \sum_{\g\in\O_{0,L}} w(\g)\,,\qquad w(\g) = \prod_{i=1}^{L}p(\g_i - \g_{i-1}).
\end{equation}
We write $\bbP_{0,L}$ for the law of the walk conditioned to $\O_{0,L}$, that is for any $\g\in\O_{0,L}$:
\begin{equation}\label{pfo1}
\bbP_{0,L} (\g)=  \frac{\bbP_0(\g)}{\bbP_0(\O_{0,L})} = \frac{w(\g)}{Z_{0,L}}.
\end{equation}
For a fixed integer $j\geq 0$, consider the paths $\O_{0,L}^{+,j}$ that stay above height $-j$:  
\begin{align}\label{o+j}
\O_{0,L}^{+,j}=\{\g\in\O_{0,L}:\; \g_i\geq -j\; \,\forall i\}\,,\qquad Z^{+,j}_{0,L} = \sum_{\g\in\O_{0,L}} w(\g)\IND(\g\in\O^{+,j}_{0,L}).
\end{align}
The number of contacts with level zero is given by
\begin{align}\label{njo}
N(\g) = \sum_{i=1}^{L-1} \IND(\g_i=0).
\end{align}
For any $\e>0$ we consider the probability measures \begin{align}\label{po+j}
\bbP_{0,L}^{+,j}(\g) \propto w(\g) \IND(\g\in\O^{+,j}_{0,L}),\qquad \bbP_{0,L}^{\e,+,j}(\g) \propto e^{\e N(\g)} w(\g) \IND(\g\in\O^{+,j}_{0,L}),
\end{align}
and the corresponding expectations $\bbE_{0,L}^{+,j}$, $\bbE_{0,L}^{\e,+,j}$.  

\begin{maintheorem}\label{th1}
There exist constants $a\geq b>0$ and $c>0$, such that the following holds  
for any integer $j\geq 0$, any $\si^2\in(0,\tfrac12]$ and any random walk $p\in\cP(\si^2)$:
\begin{enumerate}[i)] 

\item If $\e\geq a \si^2 /(j+1)$, then  for all $L$ large enough
\begin{align}\label{pinwalla}
\bbE^{+,j}_{0,L}
\big[e^{\e N}\big] 
\geq \exp{\big(c\, \si^2 L\,(j+1)^{-2}\big)}.
\end{align}

\item If $\e\leq b  \si^2 /(j+1)$, then:
\begin{align}\label{pinwallb}
\bbE^{+,j}_{0,L}
\big[e^{\e N}\big] 
\leq 2 \frac{Z_{0,L}}{Z^{+,j}_{0,L}}.
\end{align}

\end{enumerate}
\end{maintheorem}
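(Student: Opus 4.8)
\emph{Reformulation.} Set $Z^{\e,+,j}_{0,L}:=\sum_{\g\in\O_{0,L}}e^{\e N(\g)}w(\g)\IND(\g\in\O^{+,j}_{0,L})$, so that $\bbE^{+,j}_{0,L}[e^{\e N}]=Z^{\e,+,j}_{0,L}/Z^{+,j}_{0,L}$; then (ii) is exactly $Z^{\e,+,j}_{0,L}\le 2Z_{0,L}$ for all $L$, while (i) is a matching exponential lower bound. The plan is to work with the excursion (renewal) structure of the three partition functions. Let $K(\ell)=\bbP_0(\tau_0=\ell)$ be the first-return probability of the free walk and $K^{+,j}(\ell)=\bbP_0(\tau_0=\ell,\ \g_i\ge-j\ \forall\,i\le\ell)$ its analogue for the walk killed below $-j$. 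Recurrence gives $\sum_\ell K(\ell)=1$ and $\sum_\ell Z_{0,\ell}s^\ell=(1-\sum_\ell K(\ell)s^\ell)^{-1}$, whereas $\sum_\ell K^{+,j}(\ell)=\bbP_0(\tau_0<\tau_{<-j})\in(0,1)$ and $e^{-\e}Z^{\e,+,j}_{0,L}=\sum_{m\ge1}(\wt K^{*m})(L)$ with the tilted kernel $\wt K(\ell):=e^\e K^{+,j}(\ell)$. All the work is to compare $\wt K$ with $K$ across the ``diffusive'' scale $L_c\asymp(j+1)^2/\si^2$.

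\emph{Part (i): the easy bound.} By super-additivity of $L\mapsto\log(e^\e Z^{\e,+,j}_{0,L})$ the free energy $f^\star:=\lim_L L^{-1}\log Z^{\e,+,j}_{0,L}$ exists and, by standard renewal theory, equals the unique $\lambda\ge0$ with $\psi(\lambda):=e^\e\sum_\ell K^{+,j}(\ell)e^{-\lambda\ell}=1$ whenever $\psi(0)>1$. Since $Z^{+,j}_{0,L}\le Z_{0,L}\le1$, we get $\liminf_L L^{-1}\log\bbE^{+,j}_{0,L}[e^{\e N}]\ge f^\star$, so it suffices to prove $f^\star\ge c\,\si^2(j+1)^{-2}$, i.e. $\psi(\lambda_0)\ge1$ for $\lambda_0:=c\,\si^2(j+1)^{-2}$. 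Write $\psi(\lambda_0)=e^\e\big[\sum_\ell K^{+,j}(\ell)-\sum_\ell K^{+,j}(\ell)(1-e^{-\lambda_0\ell})\big]$. I would bound the first sum from below by $\bbP_0(\tau_{<-j}<\tau_0)\le C_1\si^2(j+1)^{-1}$, obtained by conditioning on the first step and a gambler's-ruin estimate, using $\sum_k|k|^3p(k)\le c_0^{-1}\si^2$ to control overshoots; and the second by $\sum_\ell K^{+,j}(\ell)(1-e^{-\lambda_0\ell})\le\sum_\ell K(\ell)(1-e^{-\lambda_0\ell})=(\sum_\ell Z_{0,\ell}e^{-\lambda_0\ell})^{-1}$, where $\sum_\ell Z_{0,\ell}e^{-\lambda_0\ell}\ge e^{-1}\sum_{\ell\le1/\lambda_0}Z_{0,\ell}\gtrsim\si^{-1}\lambda_0^{-1/2}\asymp(j+1)\si^{-2}$ by the local-CLT lower bound $\bbP_0(\g_\ell=0)\gtrsim(\si\sqrt\ell)^{-1}$ for $\ell\gtrsim\si^{-2}$, hence this term is $\le C_1'\si^2(j+1)^{-1}$. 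Since $\e\ge a\si^2(j+1)^{-1}$ gives $e^\e\ge1+a\si^2(j+1)^{-1}$, choosing the universal constant $a$ large enough (against $C_1,C_1'$, with $c$ fixed) forces $\psi(\lambda_0)\ge1$, and \eqref{pinwalla} follows for $L$ large.

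\emph{Part (ii): the hard bound.} I would prove $Z^{\e,+,j}_{0,L}\le 2Z_{0,L}$ by induction on $L$; the smallness of $\rho=\e(j+1)/\si^2$ enters only in a base case $L\le C_0L_c$. There the wall is essentially irrelevant (the killed walk from $0$ reaches $-j$ only on scale $L_c$), so $Z^{\e,+,j}_{0,L}\le Z^{\e}_{0,L}$, the pure pinning partition function at $0$, and one needs $\bbE_{0,L}[e^{\e N}]=Z^\e_{0,L}/Z_{0,L}\le 2$; this holds because under the free bridge $\bbP_{0,L}$ the contact number $N$ concentrates around $\asymp\sqrt L/\si\le(j+1)/\si^2$ with comparable fluctuations, so $\e N=O(\rho)=O(b)$ — again a local-CLT input. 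For the inductive step ($L>C_0L_c$) I would use the last-return decomposition $Z^{\e,+,j}_{0,L}=K^{+,j}(L)+e^\e\sum_{t=1}^{L-1}Z^{\e,+,j}_{0,t}K^{+,j}(L-t)$ together with $Z_{0,L}=K(L)+\sum_{t=1}^{L-1}Z_{0,t}K(L-t)$; inserting $Z^{\e,+,j}_{0,t}\le 2Z_{0,t}$ reduces everything to $\sum_{\ell=1}^{L-1}Z_{0,L-\ell}\big(e^\e K^{+,j}(\ell)-K(\ell)\big)\le0$. Split at $\ell=L_c$: for $\ell\le L_c$ the summand is $\le(e^\e-1)K(\ell)\le2\e K(\ell)$, while for $\ell>L_c$ the wall removes a definite fraction of the $\ell^{-3/2}$ tail of $K^{+,j}$ (the surviving excursions are essentially those staying positive, carrying about half the mass of $K(\ell)$), so the summand is $\le-\tfrac14K(\ell)$; using the renewal identity $\sum_{\ell=1}^{L-1}Z_{0,L-\ell}K(\ell)=Z_{0,L}-K(L)$ and the local-CLT smoothness of $m\mapsto Z_{0,m}$, the negative contribution from $\ell>L_c$, of order $\si^2(j+1)^{-1}Z_{0,L}$, dominates the positive one, of order $\e Z_{0,L}\le b\,\si^2(j+1)^{-1}Z_{0,L}$, once $b$ is small.

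\emph{Main obstacle.} The whole scheme rests on sharp and \emph{uniform} local-limit estimates: a lower bound on $\bbP_0(\g_\ell=0)$ valid for all $\ell\gtrsim\si^{-2}$ and all $p\in\cP(\si^2)$; the tail asymptotics $\sum_{\ell>T}K(\ell)\asymp\si/\sqrt T$; concentration of $N$ for the free bridge; and, most delicately, the two-sided comparison of $K^{+,j}(\ell)$ with $K(\ell)$ showing that the wall at $-j$ is negligible for $\ell\lesssim L_c$ and kills a definite fraction of the mass for $\ell\gtrsim L_c$. The difficulty is that the walk is very ``lazy'' ($p(0)\ge1-\si^2$), so the relevant scales $\si^{-2}$ and $(j+1)^2/\si^2$ blow up as $\si\to0$ and all constants must be independent of $\si$ and of $p$; the moment bounds in \eqref{pps} are precisely what make Edgeworth-type local CLTs and overshoot estimates run with explicit constants. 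I expect the base case of Part (ii) — the only place where $\rho$ must be taken small — to require the most care.
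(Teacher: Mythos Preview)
Your outline is sound and would ultimately work, but both halves take a genuinely different route from the paper, and the differences are instructive.

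\textbf{Part (i).} The paper does not use renewal theory at all. Instead it writes the partition function as a matrix power $W_L=e^{-V}(e^VP)^L$ with $P_{ij}=p(i-j)\IND(i,j\ge0)$ and $V$ the diagonal potential, proves the spectral inequality $(\varphi,W_L\varphi)\ge(\varphi,P\varphi)^L$ via Jensen on the spectral measure of $e^{V/2}Pe^{V/2}$, and then plugs in the explicit test vector $\varphi_i\propto\sin(\pi(i+1)/(d+2))$ with $d=2j$. A short Dirichlet-form computation gives $(\varphi,P\varphi)\ge 1+c\,\si^2/(j+1)^2$ once $a$ is large, and the result follows. Your renewal approach is the classical one for pinning models and is perfectly fine here; it trades the variational calculation for a gambler's-ruin bound on $\bbP_0(\tau_{<-j}<\tau_0)$ and a local-CLT lower bound on $\sum_\ell Z_{0,\ell}e^{-\lambda_0\ell}$. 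The spectral argument has the advantage that it yields the lower bound directly for $W_L(0,0)$ without passing through the free energy, and it recycles verbatim for the multi-level potential of Theorem~\ref{th2}.

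\textbf{Part (ii).} Both proofs induct over the diffusive scale $L_c\asymp(j+1)^2/\si^2$, and your base case (concentration of $N$ under the free bridge via local-CLT) is exactly the paper's Lemma~3.1. The inductive steps, however, are different. The paper does \emph{not} compare $K^{+,j}$ with $K$ termwise. Instead it decomposes the path at the last zero $\xi$ before $L/2$ and the first zero $\eta$ after $L/2$; the inductive hypothesis bounds the two outer pieces, and the middle piece (an excursion through $L/2$ staying $\ge -j$) is controlled by the single-time moderate-deviation estimate $\bbP_{0,L}(\g_{\lfloor L/2\rfloor}\ge -j)\le 3/4$ for $L\ge C(j+1)^2/\si^2$, which is just Berry--Ess\'een. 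This closes the induction with the factor $3/4$ beating $(1+\d)^2e^{2\e}$.

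Your last-return scheme instead needs the pointwise kernel comparison $K^{+,j}(\ell)\le(1-\d)K(\ell)$ uniformly in $\ell>L_c$, $j$, and $p\in\cP(\si^2)$. This is where you should be careful: conditioning on $\{\tau_0=\ell\}$ is awkward, $K(\ell)$ can vanish, and even the clean inequality $K^{+,j}(\ell)\le\tfrac12\bigl(K(\ell)+K^{[-j,j]}(\ell)\bigr)$ obtained by symmetry leaves you needing $K^{[-j,j]}(\ell)\le\tfrac12K(\ell)$, which is a comparison of two small quantities and does not follow from the spectral decay of $Z_{0,\ell}^{[-j,j]}$ alone. The estimate can be pushed through (e.g.\ by working with tails $\sum_{\ell>T}$ rather than pointwise, or by a uniform local limit theorem for excursions), but it is noticeably heavier than the paper's one-line CLT input. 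The midpoint decomposition is also what makes the argument portable to the self-avoiding path model in Section~\ref{paths}, where a renewal structure on first returns is not directly available.
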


\begin{remark}\label{krem1}
It is well known that the ratio $Z_{0,L}/Z^{+,j}_{0,L}$ appearing in \eqref{pinwallb} is $O(L)$ as $L\to\infty$; see \eqref{posk1} below. In particular,  
the bounds in Theorem \ref{th1} imply that  a wetting transition occurs at a critical value $\e_c$
that satisfies $b\leq \e_c\si^{-2}(j+1)\leq a$, with constants $a,b$ that are independent of $\si^2$ and $j$ and independent of $p\in\cP(\si^2)$. This extends well known results in the case $j=0$; see e.g.\ \cite{IsoYos,Giacomin,DGZ,CGZ}.
\end{remark}

\begin{remark}\label{krem2}
In Proposition \ref{bootstrap} below we show that the upper bound \eqref{pinwallb} can be upgraded to the following bound independent of $L$, but with possibly non-optimal dependence on $j,\si^2$:
\begin{align}\label{pinwallbo}
\bbE^{+,j}_{0,L}
\big[e^{\e N}\big] 
\leq K,
\end{align}
for some constant $K=K(j,p)$, $p\in\cP(\si^2)$, whenever $\e\leq b  \si^2 /(j+1)$. 
\end{remark}

Next, we consider a more general interaction with the wall.  
For any $\g\in\O_{0,L}^{+,0}$, define the potential
\begin{align}\label{clpin1}
\Phi(\g) = \sum_{j=0}^\infty \e_j N_j(\g)\,,\qquad N_j(\g) = \sum_{i=1}^{L-1}\IND(\g_i=j),
\end{align}
 where $\{\e_j\}$ is a given nonnegative  sequence, and let
\begin{align}\label{zetaphi}
Z^{\Phi,+,0}_{0,L}=\sum_{\g\in\O^{+,0}_{0,L}}w(\g)e^{\Phi(\g)}.
\end{align}
 
 \begin{maintheorem}\label{th2}
There exist absolute constant $a,b,c>0$  such that, for any
$\si^2\in(0,\tfrac12]$ and any random walk $p\in\cP(\si^2)$,  the
following holds:
\begin{enumerate}[i)] 

\item For any integer $d\geq 0$ such that 
\begin{align}\label{bocpinwallb}
\frac1{d+1}\sum_{j=0}^{d/2} (j+1)^2\e_j\geq a \si^2, \end{align} 
we have
\begin{align}\label{cpinwallao}
 \bbE^{+,0}_{0,L}
\big[e^{\Phi(X)}\big]\geq 
\exp{\big(c\,\si^2 L(d+1)^{-2}\big) }, 
\end{align}
for all $L$ large enough.

\item
If the sequence $\{\e_j\}$ satisfies 
\begin{align}\label{ocpinwallb}
\sum_{j=0}^\infty (j+1)\e_j\leq b \si^2, \end{align} 
then 
\begin{align}\label{occpinwallb}
Z^{\Phi,+,0}_{0,L}
\leq 4\,Z_{0,L}.
\end{align}
\end{enumerate}

\end{maintheorem}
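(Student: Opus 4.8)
The plan is to obtain both halves of Theorem~\ref{th2} from the single-level Theorem~\ref{th1}, so that the only genuinely new ingredient is combinatorial. For part~(ii) I would begin with the Jensen decoupling announced in the introduction. Assuming $\sum_{j\geq0}(j+1)\e_j>0$ (otherwise $\Phi\equiv0$ and \eqref{occpinwallb} is trivial), set $\r:=\si^{-2}\sum_{j\geq0}(j+1)\e_j$, $\r_j:=(\r\si^2)^{-1}(j+1)\e_j$ (so $\sum_j\r_j=1$) and $\k_j:=\r\si^2/(j+1)$, so that $\e_jN_j(\g)=\r_j\,\k_jN_j(\g)$ and, by convexity of the exponential,
\[
Z^{\Phi,+,0}_{0,L}\ \leq\ \sum_{j\geq0}\r_j\,Z^{+,\k_j}_{0,L},\qquad Z^{+,\k_j}_{0,L}:=\sum_{\g\in\O^{+,0}_{0,L}}w(\g)\,e^{\k_jN_j(\g)}.
\]
Since $\sum_j\r_j=1$, it is then enough to prove $Z^{+,\k_j}_{0,L}\leq4\,Z_{0,L}$ for every $j\geq0$ once $\r$ is below a universal threshold.

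This uniform bound reduces to Theorem~\ref{th1}(ii). For $j=0$ it is Theorem~\ref{th1}(ii) itself (the $j=0$ wall is exactly $\g\geq0$ and $N_0=N$). For $j\geq1$, condition on the first and last visit of $\g$ to height $j$: trajectories never reaching $j$ contribute at most $Z_{0,L}$, and the others split into (a) a first-passage excursion from $0$ to $j$ staying $\geq0$; (b) the bridge between the two visits, which after translating down by $j$ lies in $\O^{+,j}_{0,\ell}$ and whose total weight over bridges of length $\ell$ is at most $2e^{2\k_j}Z_{0,\ell}$ by Theorem~\ref{th1}(ii) (the factor $e^{2\k_j}$ accounting for the two endpoint contacts with $j$); (c) a first-passage excursion from $j$ to $0$ staying $\geq0$. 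Summing over the three lengths gives a convolution of the first-passage weights with $Z_{0,\cdot}$, which is at most $Z_{0,L}$ because the decomposition is injective (the split points are the first and last visits of $j$ in the concatenated path, hence recoverable from it). Therefore $Z^{+,\k_j}_{0,L}\leq(1+2e^{2\k_j})Z_{0,L}$, uniformly in $j$ and in $p\in\cP(\si^2)$ because the constants of Theorem~\ref{th1} are. Since $\k_j\leq\r\si^2\leq\r/2$, it suffices to take the constant $b$ in \eqref{ocpinwallb} smaller than both the constant of Theorem~\ref{th1}(ii) and $\log(3/2)$: then \eqref{ocpinwallb} gives $\r\leq b$, hence $\k_j\leq b\si^2/(j+1)$ (so Theorem~\ref{th1}(ii) applies) and $2\k_j\leq\r\leq\log(3/2)$ (so $1+2e^{2\k_j}\leq4$), and \eqref{occpinwallb} follows.

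For part~(i) I would instead exploit confinement to the strip $\{0,\dots,d\}$. Writing $\O^{[0,d]}_{0,L}:=\{\g\in\O_{0,L}:0\leq\g_i\leq d\ \ \forall i\}\subseteq\O^{+,0}_{0,L}$ and $Z^{[0,d]}_{0,L}=[Q^L]_{0,0}$ with $Q$ the restriction of the walk kernel to $\{0,\dots,d\}$, Jensen's inequality for the probability measure $w(\cdot)/Z^{[0,d]}_{0,L}$ on $\O^{[0,d]}_{0,L}$ (using $\Phi\geq0$) together with $Z^{+,0}_{0,L}\leq1$ gives
\[
\bbE^{+,0}_{0,L}\big[e^{\Phi}\big]\ \geq\ \frac{Z^{[0,d]}_{0,L}}{Z^{+,0}_{0,L}}\,e^{\,\bbE^{[0,d]}_{0,L}[\Phi]}\ \geq\ Z^{[0,d]}_{0,L}\,e^{\sum_{j=0}^{d/2}\e_j\,\bbE^{[0,d]}_{0,L}[N_j]},
\]
$\bbE^{[0,d]}_{0,L}$ denoting the associated expectation. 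I would then use the spectral estimates for $Q$ already underlying the proof of Theorem~\ref{th1}(i): a triangular test function gives $1-\l_1(Q)\leq C\si^2(d+1)^{-2}$, hence $\log Z^{[0,d]}_{0,L}\geq-C\si^2L(d+1)^{-2}-O(\log d)$; and, for $L$ large and all $0\leq j\leq d/2$, $\bbE^{[0,d]}_{0,L}[N_j]\geq cL\,\phi_1(j)^2\geq c'L(j+1)^2(d+1)^{-3}$, where $\phi_1$ is the $\ell^2$-normalised principal eigenvector of $Q$ (for the nearest-neighbour walk $\phi_1(j)\propto\sin(\pi(j+1)/(d+2))$, which makes the last inequality explicit for $j$ up to half the strip width and explains the restriction $j\leq d/2$ in \eqref{bocpinwallb}). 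Plugging this in and using \eqref{bocpinwallb},
\[
\sum_{j=0}^{d/2}\e_j\,\bbE^{[0,d]}_{0,L}[N_j]\ \geq\ \frac{c'L}{(d+1)^{3}}\sum_{j=0}^{d/2}(j+1)^2\e_j\ \geq\ c'a\,\si^2L(d+1)^{-2},
\]
so $\log\bbE^{+,0}_{0,L}[e^{\Phi}]\geq(c'a-C)\si^2L(d+1)^{-2}-O(\log d)$; choosing $a$ large enough that $c'a>C$ and then $L$ large enough to absorb the $O(\log d)$ term yields \eqref{cpinwallao}.

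Given Theorem~\ref{th1}, part~(ii) is essentially bookkeeping; the one point needing care is the injectivity of the first/last-visit decomposition and tracking the multiplicative constant so that it equals $4$. The substantive work is the lower bound $\phi_1(j)^2\gtrsim(j+1)^2(d+1)^{-3}$ used in part~(i), uniformly over the class $\cP(\si^2)$: for a general light-tailed long-range kernel one must control the principal Dirichlet eigenvector of the confined walk near the wall (equivalently, bound below the occupation of level $j$ by the confined bridge), e.g.\ by comparison with the nearest-neighbour case. This is, however, of exactly the same nature as the confined-walk estimates already needed for Theorem~\ref{th1}(i), so I do not expect a genuinely new difficulty here.
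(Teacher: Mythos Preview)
Your treatment of part~(ii) is correct and coincides with the paper's: the same Jensen decoupling into single-level problems, followed by the first/last-visit-to-level-$j$ decomposition and an appeal to Theorem~\ref{th1}(ii) (stated in the paper as Proposition~\ref{propoz}). The only cosmetic difference is that the paper writes the target as $\bbE^{+,0}_{0,L}[e^{\k_j N_j}]\le 4\,Z_{0,L}/Z^{+,0}_{0,L}$ rather than your $Z^{+,\k_j}_{0,L}\le 4\,Z_{0,L}$, but these are the same inequality.

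For part~(i) your route differs from the paper's and has a gap. The paper does \emph{not} separate an ``entropy cost'' $Z^{[0,d]}_{0,L}$ from an ``energy gain'' $\bbE^{[0,d]}_{0,L}[\Phi]$ via Jensen on the confined bridge. Instead it works directly with the tilted transfer operator $\tilde P=e^{V/2}Pe^{V/2}$, where $P_{ij}=p(i-j)\IND(i,j\ge0)$ and $V$ is the diagonal potential $V_{jj}=\e_j$. A one-line spectral inequality gives $(\psi,\tilde P^{L}\psi)\ge(\psi,\tilde P\psi)^{L}$ for any unit vector $\psi$ (Jensen for the spectral measure of the nonnegative self-adjoint $\tilde P$), and then $(\psi,\tilde P\psi)=(\varphi,P\varphi)$ is evaluated with the explicit sine test vector $\varphi_i\propto\sin(\pi(i+1)/(d+2))$, $0\le i\le d$, normalised so that $\sum_i\varphi_i^2e^{-\e_i}=1$. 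The potential enters only through this normalisation: the numerator sees the Dirichlet form of the unperturbed walk, bounded using nothing more than $\sum_k k^2p(k)=\si^2$, while the denominator is $\sum_i s(i)^2e^{-\e_i}<\sum_i s(i)^2$, and the gap between the two manufactures the exponential growth. No property of any eigenvector is used, either here or in the paper's proof of Theorem~\ref{th1}(i), which is literally the same argument with $d=2j$ and $\e_i=\e\,\IND(i=j)$.

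Your approach, by contrast, hinges on the occupation-time lower bound $\bbE^{[0,d]}_{0,L}[N_j]\ge c'L(j+1)^2(d+1)^{-3}$, which you reduce to a pointwise lower bound on the \emph{actual} principal Dirichlet eigenvector $\phi_1$ of the walk restricted to $[0,d]$, uniformly over $p\in\cP(\si^2)$. For the nearest-neighbour walk $\phi_1$ is the exact sine, but for a general kernel in $\cP(\si^2)$ the shape of $\phi_1$ near the wall is not controlled by anything in the paper. In particular it is \emph{not} ``of the same nature'' as what goes into Theorem~\ref{th1}(i): that proof uses only a variational \emph{upper} bound on the Dirichlet form with a fixed test function, never a pointwise \emph{lower} bound on an eigenvector. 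Such eigenvector control is plausible under the third-moment hypothesis of Definition~\ref{def:rw}, but it is a separate estimate that you have not supplied; as written, this step is a genuine gap. The tilted-operator trick bypasses the issue entirely and is the cleaner route here.
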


Notice that since $j\leq d$ in the summation \eqref{bocpinwallb}, the
condition for localization is slightly stronger than 
the bound $\sum_{j=0}^\infty (j+1)\e_j\geq a \si^2$ that would be
sharp in view point (ii). However, in many natural cases of interest,
condition \eqref{bocpinwallb} is actually rather sharp; see Corollary \ref{coro1} below.


\begin{remark}
\label{Jost} Our criterion \eqref{ocpinwallb} for the
  absence of a localized phase bears some similarity with the one
  derived in \cite{Jost} by Jost and Pais (cf. also the more general Bargmann's
  bounds in \cite{Bargman,Solomyak}) to exclude bound states
 for the Schr\"odinger equation 
\[
-\D\psi+V(r)\psi = E \psi
\]
in
  an attractive central potential $V\le 0$ in $\bbR^3$ or, after moving
  to radial coordinates, for the Sturm-Liouville problem on the
  half-line
\[
-\frac{d^2}{dx^2}f(x) +V(x)f(x)=\l f(x),\quad f(0)=0.
\] In \cite{Jost} it was proved in
  fact that
  if $\int_0^\infty dr \, r |V(r)| <1$ then there are no bound
  states. The connection between the pinning problem and the bound
  state problem goes as follows.  Let $\gamma(\cdot)$ be the random walk on
  $\bbZ$ with law $p\in \cP(\sigma^2)$ and let 
 $\bbE_0(\cdot)$ denote the average over the trajectories
  of $\gamma(\cdot)$ starting at the origin. If $\t$ denotes the hitting
  time of the half-line $(-\infty,-1]$ then we can write
\[
\frac{Z^{\Phi,+,0}_{0,L}}{Z^+_{0,L}}=
\bbE_0\left(e^{\sum_{s=0}^L F(\gamma(s))}\mid \gamma(L)=0;\, \t>L\right),
\]
where $F(x) = \sum_{j\geq 0} \e_j \IND(x=j)$.
If we pretend that the random walk 
behaves like a
Brownian motion with the correct diffusion constant and we replace $F$
with $V(x):=-F(\lfloor x\rfloor)$, then, using the Feynman-Kac
formula, we get that the r.h.s.\ above
has the form 
\begin{equation}
  \label{eq:settembre}
e^{-L \cH}(0,0)/p(0,0;L),
\end{equation}
where $\cH= -\s^2\frac{d^2}{dx^2} +V(x)$ acts on $\bbL^2((-1,+\infty))$ with
Dirichlet boundary conditions and $p(x,y;L)$ is the transition
probability density for the Brownian motion killed at $-1$. If the equation 
$\cH f=\l f$
has a solution bounded in $\bbL^2$ (of course with a negative eigenvalue $\l$) then the above
ratio should diverge exponentially fast in $L$. The
Jost-Pais criterium says that this cannot be the case if
$\int_{0}^{+\infty} dr\, r |V(r)| <\s^2$, which is indeed analogous to
our condition $\sum_{j\geq 0} \e_{j}(j+1)  < b\si^2$. Notice that the
absence of bound states does not guarantee that the ratio
\eqref{eq:settembre} stays bounded in $L$. In this sense our result is
stronger.  
\end{remark}

The above theorems allow us to identify rather precisely the critical point of the wetting transition for the generalized pinning problem described by \eqref{zetaphi} when the sequence $\e_j$ is given. For the sake of definiteness we mention only two types of sequences below, the power law and the exponential law. 
It is immediate to deduce the following corollary.
\begin{maincorollary}\label{coro1} 
Let the sequence $\e_j^0$ be either the power law $\e^0_j = (j+1)^{-2-\d}$ or the exponential law $\e^0_j=e^{-\d j}$, for some $\d>0$. Then   
there exist constants $a> b>0$ and $c>0$ such that the following holds
for any $\si^2\in(0,\tfrac12]$ and any $p\in\cP(\si^2)$:
\begin{enumerate}[i)] 

\item If the pinning sequence is $\e_j:= a\,\si^2 \e^0_j$, then the walk is localized: for $L$ large enough, 
\begin{align}\label{cpinw21}
 \bbE^{+,0}_{0,L}
\big[e^{\Phi(X)}\big]\geq 
\exp{\big(c\,\si^2 L\big) }; 
\end{align}

\item If the pinning sequence is $\e_j:= b\,\si^2 \e^0_j$, then the walk is delocalized:
\begin{align}\label{occpinw}
Z^{\Phi,+,0}_{0,L}
\leq 4\,Z_{0,L}.
\end{align}
\end{enumerate}
On the other hand, suppose that $\e^0_j =(j+1)^{-2+\d}$, for some
$\d>0$. Then, for any $b>0$, for any $\si^2>0$, any  walk $p\in\cP(\si^2)$
is localized by the pinning sequence $\e_j=b\,\si^2\e^0_j$.
 
\end{maincorollary}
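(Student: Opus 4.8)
The plan is to read off the whole corollary from Theorem \ref{th2}: the only work is to check its hypotheses \eqref{bocpinwallb} and \eqref{ocpinwallb} for the three sequences at hand, making sure the constants $a>b>0$ one extracts are independent of $\si^2$ and of $p\in\cP(\si^2)$. Write $\bar a\geq\bar b>0$ and $\bar c>0$ for the absolute constants of Theorem \ref{th2}, and note that $\e^0_0=1$ in all three cases.

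For the delocalization part (ii) I would first record that $S:=\sum_{j\geq0}(j+1)\e^0_j$ is a finite constant depending only on $\d$: for the power law $S=\sum_{j\geq0}(j+1)^{-1-\d}=\zeta(1+\d)<\infty$ since $\d>0$, and for the exponential law $S=\sum_{j\geq0}(j+1)e^{-\d j}<\infty$. Then with $b:=\bar b/S$ the sequence $\e_j=b\,\si^2\e^0_j$ satisfies $\sum_{j\geq0}(j+1)\e_j=\bar b\,\si^2$, which is precisely \eqref{ocpinwallb}, so Theorem \ref{th2}(ii) yields \eqref{occpinw}. For the localization part (i) I would exploit that these sequences are dominated by their value at $j=0$: taking $d=0$ in \eqref{bocpinwallb}, the left-hand side is just $\e_0=a\,\si^2$, so the condition holds as soon as $a\geq\bar a$. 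Choosing $a:=\max\{\bar a,\,2\bar b/S\}$ (which also forces $a>b$), Theorem \ref{th2}(i) applied with $d=0$ gives $\bbE^{+,0}_{0,L}[e^{\Phi}]\geq\exp(\bar c\,\si^2 L)$, which is \eqref{cpinw21}; no optimization over $d$ is needed.

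For the last assertion, with $\e^0_j=(j+1)^{-2+\d}$ the mechanism is reversed, since now $(j+1)^2\e^0_j=(j+1)^\d$ grows: for every integer $d\geq 0$,
\[
\frac1{d+1}\sum_{j=0}^{\lfloor d/2\rfloor}(j+1)^2\e_j
=\frac{b\,\si^2}{d+1}\sum_{j=0}^{\lfloor d/2\rfloor}(j+1)^\d
\geq c_\d\,b\,\si^2\,(d+1)^{\d},
\]
for some $c_\d>0$ depending only on $\d$ (compare the sum with $\int_0^{\lfloor d/2\rfloor}(x+1)^\d\,dx$). Since $(d+1)^\d\to\infty$, for any fixed $b>0$ one can choose $d=d(b,\d)$ — crucially independent of $\si^2$ and of $p$ — with $c_\d\,b\,(d+1)^\d\geq\bar a$, so that \eqref{bocpinwallb} holds; Theorem \ref{th2}(i) then gives $\bbE^{+,0}_{0,L}[e^{\Phi}]\geq\exp(\bar c\,\si^2 L(d+1)^{-2})$, hence $f(\be)\geq\bar c\,\si^2(d+1)^{-2}>0$ and the walk is localized.

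I do not anticipate any genuine obstacle here, since the substance is entirely contained in Theorem \ref{th2}; the only points deserving a line of care are that $S$ is finite and $c_\d$ positive with both depending only on $\d$, and that in the final case the choice of $d$ really is uniform in $\si^2$ — which it is, because $\si^2$ cancels from the inequality $c_\d\,b\,(d+1)^\d\geq\bar a$ that determines $d$.
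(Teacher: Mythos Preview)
Your proposal is correct and matches the paper's own treatment, which simply declares the corollary ``immediate'' from Theorem~\ref{th2} without spelling out the verification. Your choice of $d=0$ for the localization part in the summable cases is clean (since $\e^0_0=1$), and your handling of the borderline sequence $(j+1)^{-2+\d}$ via letting $d\to\infty$ with $\si^2$ cancelling from the key inequality is exactly the intended mechanism.
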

  
\subsection{Self-avoiding lattice paths interacting with a wall}\label{latticepaths}
We turn to the description of the lattice path model.
We first define the class of lattice paths to be considered. 

\begin{definition}\label{contourdef}
We call $V:=(\bbZ +\tfrac12)\times \bbZ$ the vertex set of our lattice paths. 
An {\sl edge} is an unordered pair of points $e=\{x,y\}$, $x,y\in V$, with euclidean distance $d(x,y)=1$. 
An edge can be horizontal if $x = y \pm e_1$  or vertical if $x=y\pm e_2$, where $e_1=(1,0)$ and $e_2=(0,1)$.  
A {\sl self-avoiding lattice path} 
(for short a {\sl path} in the sequel) 
joining  $x\in V$ and $y\in V$ is a 
sequence $f_0,\ldots,f_n$ of edges such that:
\begin{enumerate}
\item for every $i=0,\dots,n-1$, $f_i$ and $f_{i+1}$ have one common vertex  $z_i\in V$;
\item $f_0=\{z_0,z_1\}$, $f_n=\{z_n,z_{n+1}\}$ with $z_0=x$ and $z_{n+1}=y$;
\item all vertices $z_i$, $i=0,\dots,n+1$, are distinct.
\end{enumerate}
We denote the length of a lattice path $\gamma$, that is the number of edges in $\g$, by $|\gamma|$.
Given $x,y\in V$, $x\neq y$, we call $\Omega(x,y)$ the set of all paths  joining $x$ and $y$.
\end{definition}

\vskip0.35cm
\begin{figure}[htb]
        \centering
 \begin{overpic}[scale=0.47]{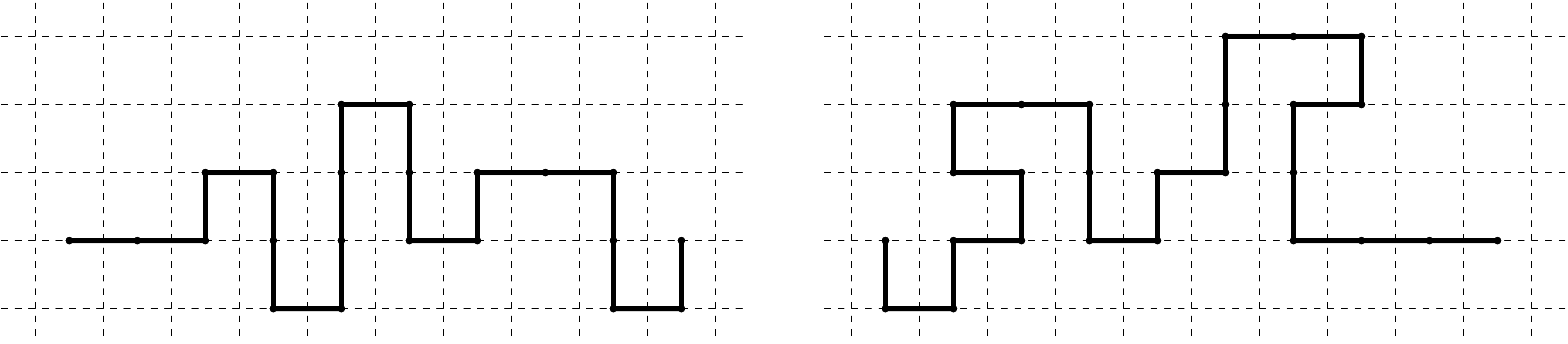}
\put(4,8) {$x$}\put(43,8) {$y$}
\put(56,8) {$x$}\put(95,8) {$y$}
\end{overpic}
        \caption{
        Two examples of lattice paths joining the vertices $x=(\tfrac12,0)$ and $y=(10-\tfrac12,0)$. In the background, the integer grid $\bbZ^2$.  
          }\label{fig:paths}
\end{figure}
Notice that for $x=(x_1,x_2)$ and $y=(y_1,y_2)$, all  $\g\in \O(x,y)$ have at least $|y_1-x_1|$ horizontal edges. Paths that have exactly $ |y_1-x_1|$  horizontal edges are in one to one correspondence with random walk trajectories from $x-\tfrac12 e_1$ to $y+\tfrac12 e_1$; see Figure \ref{fig:paths}.

Next, we define the ensemble of lattice paths. 
Given $x,y\in V$, $x\neq y$, we 
write
\begin{align}\label{partfctso}
\cZ(x,y) = \sum_{\g\in\O(x,y)}e^{-\b|\g|},
\end{align}
where $\b>0$ is the inverse temperature parameter. With slight abuse of notation, when $x=(\tfrac12,0)$ and $y=(L-\tfrac12,0)$, we write $\O_{0,L}$ instead of $\O(x,y)$ and $\cZ_{0,L}$ instead of $\cZ(x,y)$. Observe that  
if we restrict 
to the set $\hat \O_{0,L}$ of paths with minimal number of horizontal edges then we obtain 
the following relation with the partition function $Z_{0,L} $ of the SOS walk with parameter $\b$:
\begin{align}\label{sosvsp}
\hat \cZ_{0,L} := \sum_{\g\in \hat \O_{0,L}} e^{-\b|\g|} = e^{-\b(L-1)}(Z_\b)^{L}Z_{0,L},
\end{align}
where $Z_\b = (1+e^{-\b})(1-e^{-\b})^{-1}$.

For any $\g\in\O_{0,L}$, define 
\begin{align}\label{clpin1p}
\Phi(\g) = \sum_{j=0}^\infty \e_j N_j(\g)\,,\qquad N_j(\g) = \sum_{i=1}^{L-2}\IND((i+\tfrac12,j)\in\g),
\end{align}
 where $\{\e_j\}$ is a nonnegative sequence. Define also 
\begin{align}\label{zetaphip}
\cZ^{\Phi,+,0}_{0,L}=\sum_{\g\in\O^{+,0}_{0,L}}e^{\Phi(\g)- \b|\g|},
\end{align}
where $\O^{+,0}_{0,L}$ denotes the set of paths $\g\in\O_{0,L}$ with $\g\geq 0$. With slight abuse of notation
we write again $\bbP_{0,L}$ and $\bbE_{0,L}$ (resp.\ $\bbP^{+,0}_{0,L}$ and $\bbE^{+,0}_{0,L}$) 
for the probability and expectation over paths $\g\in \O_{0,L}$ (resp.\ $\g\in\O^{+,0}_{0,L}$) with weight 
$e^{-\b|\g|}$.

 \begin{remark}[Alternative definitions of the number of contacts] \label{rem_zeros}
 In analogy with the random walk case one could have  considered the number $\hat N_j(\g)$ 
 of {\em horizontal} contacts of $\g$ with the $j$-th level rather than the number $N_j(\g)$ defined in \eqref{clpin1p}. By horizontal contact with the $j$-th level here we mean an horizontal edge $f\in\g$ at height $j$.  For instance the path in Figure \ref{fig:paths} (left) has $N_0(\g)=7$ and $\hat N_0(\g)=3$. 
 It is easy to check that all our results apply with no modifications to the potential $\hat \Phi$ obtained as in \eqref{clpin1p} with $N_j$ replaced by $\hat N_j$. As we will  see  in the proof of Theorem \ref{th2p}, our results actually extend to a model where one takes into account also possible contacts of $\g$ with the $j$-th level occurring outside of the horizontal interval $[0,L]$.
 \end{remark}

 \begin{maintheorem}[Delocalized phase]\label{th2p}
There exist universal constants $b>0$, $\b_0>0$  
such that the following holds for any $\b\geq\b_0$.
If $\sum_{j=0}^\infty (j+1)\e_j\leq b e^{-\b} $, then for all $L\in\bbN$
\begin{align}\label{cpinwallbp}
\cZ^{\Phi,+,0}_{0,L}
\leq 4\cZ_{0,L}.
\end{align}
\end{maintheorem}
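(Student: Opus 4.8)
The plan is to reduce the self-avoiding path model to the random-walk model of Theorem \ref{th2}, applied to the SOS walk at inverse temperature $\b$, and then control the ``extra'' paths (those with more than the minimal number of horizontal edges) by a contour-counting argument that is absorbed into the constants for $\b$ large. Recall from \eqref{sigmabeta} that the SOS walk has variance $\s^2 = 2e^{-\b}(1-e^{-\b})^{-2}\sim 2e^{-\b}$ as $\b\to\infty$, and from \eqref{pps} that for $\b$ large enough $p\in\cP(\s^2)$. So the hypothesis $\sum_j(j+1)\e_j\le be^{-\b}$ is, up to a universal constant, the hypothesis \eqref{ocpinwallb} of Theorem \ref{th2}(ii) for this walk; choosing $b$ small relative to the $b$ of Theorem \ref{th2} takes care of the discrepancy between $e^{-\b}$ and $\s^2$.

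First I would set up a decomposition of an arbitrary path $\g\in\O^{+,0}_{0,L}$ according to its sequence of horizontal edges. A path with exactly $m\ge L-1$ horizontal edges can be encoded as: (a) a choice of $L-1$ ``forward'' horizontal steps among the $m$, equivalently the heights $\g_0,\dots,\g_{L-1}$ visited just after each such step, which is exactly a monotone-in-the-first-coordinate skeleton corresponding to an SOS walk trajectory, plus (b) the vertical excursions glued in between. The key point is the factorization of the weight: $e^{-\b|\g|}$ over the path splits, via \eqref{sosvsp}-type bookkeeping, into the SOS walk weight $w(\g)$ of the skeleton times a product of geometric-type factors for the inserted vertical detours, and these detour factors sum (over all ways of inserting detours at fixed skeleton) to a constant arbitrarily close to $1$ as $\b\to\infty$. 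Crucially, the pinning potential $\Phi(\g)=\sum_j\e_j N_j(\g)$ with $N_j$ counting vertices $(i+\tfrac12,j)\in\g$ is a sum over \emph{columns}: within each column the number of lattice points of $\g$ at height $j$ is at least the number of times the skeleton is at height $j$, and the surplus is controlled by the detours. Since $\e_j\ge0$, one must be a little careful that the detours \emph{increase} $\Phi$; but the extra contacts created by a detour of total vertical length $\ell$ are at most $O(\ell)$, so they contribute a factor at most $e^{O(\ell)\max_j\e_j}\le e^{O(\ell)be^{-\b}}$, which is swamped by the $e^{-\b\ell}$ from the length weight once $\b$ is large. (This is the point of Remark \ref{rem_zeros}: contacts outside $[0,L]$ are handled the same way.)

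Carrying this out: I would write
\[
\cZ^{\Phi,+,0}_{0,L}\;\le\; \Big(\sup_{\text{skeleton }\eta}\ \sum_{\text{detours of }\eta} e^{-\b(\text{detour length})+\Phi_{\rm extra}}\Big)\ \cdot\ e^{-\b(L-1)}(Z_\b)^{L}\, Z^{\Phi,+,0}_{0,L}(\text{SOS}),
\]
where $Z^{\Phi,+,0}_{0,L}(\text{SOS})$ is the random-walk partition function \eqref{zetaphi} for the SOS walk (with possibly a slightly enlarged $\e_j$ to absorb the surplus-contact bookkeeping into a single clean inequality, still satisfying \eqref{ocpinwallb} with the smaller $b$). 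The supremum over skeletons of the detour sum is bounded by a convergent series in $e^{-\b}$ — each unit of detour length costs $e^{-\b}$ and the combinatorial entropy of detour shapes is at worst exponential with a fixed base — hence is $\le 1+o(1)\le \tfrac{3}{2}$, say, for $\b\ge\b_0$. Then Theorem \ref{th2}(ii) gives $Z^{\Phi,+,0}_{0,L}(\text{SOS})\le 4 Z_{0,L}$, and finally \eqref{sosvsp} converts $e^{-\b(L-1)}(Z_\b)^L Z_{0,L}=\hat\cZ_{0,L}\le\cZ_{0,L}$, yielding $\cZ^{\Phi,+,0}_{0,L}\le 6\cZ_{0,L}$ after renaming constants — and retuning $b_0$ and the detour bound so the constant is genuinely $4$ if one wants the stated form, or simply noting the bound \eqref{cpinwallbp} holds after adjusting universal constants.

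The main obstacle I expect is the bookkeeping in the decomposition: making precise the bijection between a self-avoiding path and (skeleton walk, vertical detours) while keeping rigorous track of (i) self-avoidance constraints on the detours — a detour cannot re-enter the skeleton or an earlier detour, which is what makes the detour sum a genuine (slightly awkward) combinatorial sum rather than a clean product — and (ii) the exact relation between $N_j(\g)$ and the skeleton's contact count at level $j$ plus detour surplus, uniformly in $j$ and $L$. The self-avoidance is the delicate part: one cannot simply sum detours independently column by column. The standard remedy is to bound, rather than compute exactly: drop the mutual-avoidance constraints between detours (this only increases the sum) and bound the per-column detour sum by that of a single self-avoiding vertical excursion weighted by $e^{-\b}$ per edge, which converges geometrically for $\b>\log(\text{something universal})$. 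Once that domination is in place the rest is the soft argument above, and the smallness of $b$ and largeness of $\b_0$ are chosen at the very end to make all the $o(1)$'s fit under the constant $4$.
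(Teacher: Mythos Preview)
Your reduction to the SOS walk via a ``skeleton + detours'' decomposition has a genuine gap: the detour sum is \emph{not} bounded by $1+o(1)$ uniformly in $L$. Even with $\Phi=0$ and no wall, the ratio $\cZ_{0,L}/\hat\cZ_{0,L}$ grows like $\exp(c\,e^{-2\b}L)$ --- this is exactly the content of the estimate \eqref{pcheckp1} (quoted from \cite{DKS}) which the paper uses only in the lower bound. The reason is clear once you think of the detour sum as a product over the $L$ columns: each column contributes a factor $1+O(e^{-2\b})$, and the product over $L$ columns is exponential in $L$, not $1+o(1)$. Your argument that ``each unit of detour length costs $e^{-\b}$ and the combinatorial entropy is at worst exponential with a fixed base'' bounds the contribution of a \emph{single} detour of given length, not the sum over all detour configurations at all positions. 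So your displayed inequality yields at best $\cZ^{\Phi,+,0}_{0,L}\le e^{c\,e^{-2\b}L}\cdot 4\,\hat\cZ_{0,L}$, which is useless.

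One might hope to salvage the idea by arguing that the same exponential detour factor appears on both sides, i.e.\ that $\cZ^{\Phi,+,0}_{0,L}/\hat\cZ^{\Phi,+,0}_{0,L}\approx\cZ_{0,L}/\hat\cZ_{0,L}$, so the ratios match. But the detour sum $D(\eta)$ genuinely depends on the skeleton $\eta$: when $\eta$ is pinned near the wall (which is exactly the regime the potential $\Phi$ favors), the constraint $\g\ge0$ suppresses downward detours, so $D(\eta)$ is strictly smaller than for a skeleton far from the wall. This $\eta$-dependence obstructs any clean factorization, and controlling it uniformly is not obviously easier than the original problem. There is also a more basic issue you flag but do not resolve: a self-avoiding path with backward horizontal steps has no canonical SOS skeleton at all.

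The paper avoids all of this by \emph{not} reducing to the SOS walk. Instead it reruns the recursive argument of Proposition \ref{propoz} directly in the lattice-path ensemble (Proposition \ref{lpropoz}): Jensen reduces to single-level pinning, Lemma \ref{smallscalep} handles $L\le C(j+1)^2e^{\b}$ by the same moment expansion as Lemma \ref{small} (with an extra sum over permutations of the contact order, controlled by \eqref{asapa1}), and the induction step uses the last/first contact decomposition together with the regularity and external-zero estimates of Appendix \ref{A} to reassemble the pieces into a genuine lattice path. The point is that the recursive scheme compares $\cZ^{\e,+,j}_{0,L}$ directly to $\cZ_{0,L}$ at each scale, so the exponential-in-$L$ ``surface tension shift'' between the self-avoiding and SOS ensembles never enters.
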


\begin{maintheorem}[Localized phase]\label{th22p}
There exist universal constants $ a,c,\b_0>0$ 
such that the following holds:
For any sequence $\{\e_j\}$, any integer $d\geq 0$ and any $\b\geq \b_0$ such that 
\begin{align}\label{bocpp}
\frac1{d+1}\sum_{j=0}^d (j+1)^2\e_j\geq a e^{-\b}, \qquad e^{-\b}\leq c (d+1)^{-2},
\end{align}
one has 
\begin{align}\label{cpinw}
 \bbE^{+,0}_{0,L}
\big[e^{\Phi(X)}\big]\geq 
\exp{\big(c\,e^{-\b} L(d+1)^{-2}\big) }, 
\end{align}
for all $L$ large enough.
\end{maintheorem}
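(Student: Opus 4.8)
The plan is to deduce Theorem~\ref{th22p} from its random-walk counterpart, Theorem~\ref{th2}(i), applied to the SOS walk at inverse temperature $\b$. By \eqref{sigmabeta} that walk belongs to $\cP(\si^2)$ with $\si^2=2e^{-\b}(1-e^{-\b})^{-2}$, so $2e^{-\b}\leq\si^2\leq 4e^{-\b}$ and $\si^2\leq\tfrac12$ once $\b\geq\b_0$. Also, by Remark~\ref{rem_zeros} it suffices to prove the bound for the potential $\hat\Phi$ built from the numbers $\hat N_j$ of \emph{horizontal} contacts with level $j$: since $N_j\geq\hat N_j$ pointwise we have $\Phi\geq\hat\Phi$, hence $\bbE^{+,0}_{0,L}[e^{\Phi}]\geq\bbE^{+,0}_{0,L}[e^{\hat\Phi}]$.

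The reason to use $\hat\Phi$ is that on the set $\hat\O_{0,L}$ of lattice paths with the minimal number of horizontal edges, paths are in bijection with SOS-walk bridges from $0$ to $0$, the horizontal edges of a path at height $j$ being exactly the steps at which the walk sits at $j$; this is the bijection behind \eqref{sosvsp}. Restricting the numerator of $\bbE^{+,0}_{0,L}[e^{\hat\Phi}]=\cZ^{\hat\Phi,+,0}_{0,L}/\cZ^{+,0}_{0,L}$ to the non-negative minimal paths therefore gives
\[
\cZ^{\hat\Phi,+,0}_{0,L}\;\geq\;e^{-\b(L-1)}(Z_\b)^{L}\,Z^{\Phi,+,0}_{0,L},
\]
with $Z^{\Phi,+,0}_{0,L}$ the SOS-walk pinning partition function of \eqref{zetaphi}. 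For the denominator I would drop the wall, $\cZ^{+,0}_{0,L}\leq\cZ_{0,L}$, and then use the standard low-temperature estimate that the ``overhangs'' of a lattice path relative to its minimal skeleton cost at least $e^{-2\b}$ per pair of extra edges and can be resummed by a convergent Peierls/cluster expansion: for $\b\geq\b_0$,
\[
\cZ_{0,L}\;\leq\;e^{C e^{-2\b}L}\,\hat\cZ_{0,L}\;=\;e^{C e^{-2\b}L}\,e^{-\b(L-1)}(Z_\b)^{L}\,Z_{0,L},
\]
with an \emph{absolute} constant $C$, $Z_{0,L}$ being the free SOS-walk bridge partition function.

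Dividing, the factor $e^{-\b(L-1)}(Z_\b)^{L}$ cancels, and using also the elementary fact $Z_{0,L}/Z^{+,0}_{0,L}=O(L)$ recalled in Remark~\ref{krem1} one obtains
\[
\bbE^{+,0}_{0,L}\big[e^{\Phi}\big]\;\geq\;e^{-C e^{-2\b}L}\,\frac{Z^{\Phi,+,0}_{0,L}}{Z_{0,L}}\;\geq\;\frac{c}{L}\,e^{-C e^{-2\b}L}\,\bbE^{+,0}_{0,L}\big[e^{\Phi}\big]_{\mathrm{SOS}}.
\]
Now I apply Theorem~\ref{th2}(i) to the SOS walk with $D:=2d$ in place of $d$: since $e^{-\b}\geq\tfrac14\si^2$, the hypothesis $\tfrac1{d+1}\sum_{j=0}^{d}(j+1)^2\e_j\geq a e^{-\b}$ yields $\tfrac1{D+1}\sum_{j=0}^{D/2}(j+1)^2\e_j\geq\tfrac12 a e^{-\b}\geq\tfrac18 a\,\si^2$, which is condition \eqref{bocpinwallb} for the SOS walk provided $a$ (which we may fix) is chosen large enough. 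Hence $\bbE^{+,0}_{0,L}[e^{\Phi}]_{\mathrm{SOS}}\geq\exp\!\big(c_1\si^2 L(2d+1)^{-2}\big)\geq\exp\!\big(c_2 e^{-\b}L(d+1)^{-2}\big)$ for $L$ large, and therefore
\[
\bbE^{+,0}_{0,L}\big[e^{\Phi}\big]\;\geq\;\frac{c}{L}\,\exp\!\Big(e^{-\b}\big(c_2(d+1)^{-2}-C e^{-\b}\big)L\Big).
\]
The second hypothesis $e^{-\b}\leq c(d+1)^{-2}$ with $c\leq c_2/(2C)$ forces $Ce^{-\b}\leq\tfrac12 c_2(d+1)^{-2}$, so the exponent is at least $\tfrac12 c_2 e^{-\b}L(d+1)^{-2}$; absorbing the prefactor $c/L$ for $L$ large and relabelling constants yields \eqref{cpinw}.

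The only substantial step is the overhang estimate for $\cZ_{0,L}$: one must bound the contribution of non-minimal paths \emph{uniformly in $L$}, with the right small rate $O(e^{-2\b})$ and with a constant $C$ independent of $d$ and of the sequence $\{\e_j\}$. This is exactly why the condition $e^{-\b}\leq c(d+1)^{-2}$ is imposed: it guarantees that this (in the present approach, unavoidable) $O(e^{-2\b})$ loss per unit length stays below the free-energy gain $\gtrsim e^{-\b}(d+1)^{-2}$ coming from the random-walk estimate. The remaining book-keeping --- junction terms in the path/walk correspondence, the $O(L)$ factors --- only affects sub-exponential prefactors and is harmless as $L\to\infty$.
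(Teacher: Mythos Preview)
Your argument is correct and follows essentially the same route as the paper's: restrict the numerator to minimal paths to reduce to the SOS walk, drop the wall in the denominator and invoke the DKS overhang estimate $\hat\cZ_{0,L}/\cZ_{0,L}\geq e^{-ce^{-2\b}L}$, then apply the random-walk lower bound \eqref{cpinwallao}. Your treatment is in fact slightly more explicit than the paper's terse version: you spell out the reparametrization $D=2d$ needed to match the summation range in hypothesis \eqref{bocpinwallb}, and you make precise why the extra assumption $e^{-\b}\leq c(d+1)^{-2}$ is required, namely to ensure the $O(e^{-2\b})$ loss from the overhang comparison is dominated by the $e^{-\b}(d+1)^{-2}$ free-energy gain.
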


As in Corollary \ref{coro1} one can immediately infer from Theorem \ref{th2p} and Theorem \ref{th22p} the following facts.
\begin{maincorollary}\label{coro2}
Let the sequence $\e_j^0$ be either the power law $\e^0_j = (j+1)^{-2-\d}$ or the exponential law $\e^0_j=e^{-\d j}$, for some $\d>0$. Then   
there exist constants $a> b>0,c>0$ and $\b_0>0$ such that the following holds for any $\b\geq \b_0$:
\begin{enumerate}[i)] 

\item If the pinning sequence is $\e_j:= a\,e^{-\b} \e^0_j$, then the path is localized with 
\begin{align}\label{cpinw21p}
 \bbE^{+,0}_{0,L}
\big[e^{\Phi(X)}\big]\geq 
\exp{\big(c\,e^{-\b} L\big) }. 
\end{align}

\item If the pinning sequence is $\e_j:= b\,e^{-\b} \e^0_j$, then the path is delocalized with 
\begin{align}\label{occpinwp}
\cZ^{\Phi,+,0}_{0,L}
\leq 4\cZ_{0,L}.
\end{align}
\end{enumerate}
On the other hand, suppose that $\e^0_j = (j+1)^{-2+\d}$, for some $\d>0$. Then, for any $a>0$, the path is localized  
with the pinning sequence $ \e_j=ae^{-\b}\e^0_j$ for any $\b\geq \b_0$ for some constant $\b_0=\b_0(a,\d)>0$.
 \end{maincorollary}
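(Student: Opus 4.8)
The plan is to read off Corollary \ref{coro2} directly from Theorem \ref{th2p} and Theorem \ref{th22p}: all of the analytic content is already contained in those two statements, and what remains is only to check that each of the three prescribed choices of the sequence $\{\e_j\}$ meets their hypotheses for a suitable value of the auxiliary integer $d$. Throughout, let $b_*>0$ denote the universal constant furnished by Theorem \ref{th2p}, and let $a_*,c_*,\b_*>0$ denote those furnished by Theorem \ref{th22p}.

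I would begin with the delocalized statement (part (ii)), which is the quickest. For both $\e^0_j=(j+1)^{-2-\d}$ and $\e^0_j=e^{-\d j}$ the sum $S:=\sum_{j\geq0}(j+1)\e^0_j$ is finite --- in the first case because $\sum_{j}(j+1)^{-1-\d}<\infty$ when $\d>0$, in the second because $\sum_{j}(j+1)e^{-\d j}=(1-e^{-\d})^{-2}$. Setting $b:=b_*/\max\{1,S\}$ and $\e_j:=b\,e^{-\b}\e^0_j$ then gives $\sum_{j\geq0}(j+1)\e_j=bS\,e^{-\b}\leq b_*\,e^{-\b}$, so the hypothesis of Theorem \ref{th2p} holds and \eqref{occpinwp} follows for every $\b\geq\b_*$.

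For the localized statement (part (i)) I would take $d=0$ in Theorem \ref{th22p}. Since $\e^0_0=1$ for both sequences, the choice $\e_j:=a\,e^{-\b}\e^0_j$ turns \eqref{bocpp} into the two inequalities $\e_0=a\,e^{-\b}\geq a_*\,e^{-\b}$ and $e^{-\b}\leq c_*$, which are satisfied whenever $a\geq a_*$ and $\b\geq\log(1/c_*)$. Taking $a:=\max\{a_*,2b\}$ (so in particular $a>b$), $c:=c_*$ and $\b_0:=\max\{\b_*,\log(1/c_*)\}$, Theorem \ref{th22p} yields \eqref{cpinw21p}, so that the single quadruple $(a,b,c,\b_0)$ works for both (i) and (ii). For the super-critical power law $\e^0_j=(j+1)^{-2+\d}$ the choice $d=0$ no longer works, since $a>0$ is now arbitrary and may be smaller than $a_*$; here I would instead let $d$ grow. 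Because $(j+1)^2\e^0_j=(j+1)^{\d}$, an elementary bound gives a constant $c_\d>0$ with $\sum_{j=0}^d(j+1)^{\d}\geq c_\d\,(d+1)^{1+\d}$ for all $d\geq0$, whence $\frac1{d+1}\sum_{j=0}^d(j+1)^2\e_j\geq a\,c_\d\,e^{-\b}(d+1)^{\d}$ for $\e_j:=a\,e^{-\b}\e^0_j$. Choosing $d:=\lceil(a_*/(a c_\d))^{1/\d}\rceil$ makes the first inequality in \eqref{bocpp} hold, while the second, $e^{-\b}\leq c_*(d+1)^{-2}$, holds for every $\b\geq\b_0(a,\d):=\max\{\b_*,\,2\log((d+1)/\sqrt{c_*})\}$; Theorem \ref{th22p} then yields localization, with exponential growth rate at least $c_*\,e^{-\b}(d+1)^{-2}>0$.

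I do not anticipate a real obstacle: once Theorems \ref{th2p} and \ref{th22p} are available, the proof is pure bookkeeping of universal constants. The only mildly delicate point is the choice of $d$ in the super-critical case, where $d$ must be taken large enough to benefit from the growth of $\sum_{j\leq d}(j+1)^{\d}$ and yet small enough to respect the diffusive constraint $e^{-\b}\leq c_*(d+1)^{-2}$ --- precisely the trade-off flagged in the discussion following Theorem \ref{th2}. In the present setting this causes no trouble, since the relevant $d$ turns out to be a constant depending only on $a$ and $\d$.
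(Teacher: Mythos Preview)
Your argument is correct and is essentially the approach the paper has in mind: the paper gives no explicit proof for Corollary~\ref{coro2}, stating only that it follows immediately from Theorems~\ref{th2p} and~\ref{th22p}, and your verification of the hypotheses (with $d=0$ for the subcritical sequences and $d$ a fixed constant depending on $a,\d$ for the supercritical power law) is exactly the intended computation.
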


\section{Random walks}
\label{pinwall}
Here we prove the main results for the random walk model. 

\subsection{Proof of the lower bounds}\label{lowborw}

We start with some considerations that apply to both the lower bound \eqref{pinwalla}
in Theorem \ref{th1} and to \eqref{cpinwallao}  in Theorem
\ref{th2}. First of all, we can assume that $\epsilon_j\le \log 2$ for each
$j$. Otherwise if $\epsilon_{j^*}>\log 2$, one gets immediately exponential growth of the
partition function using irreducibility of the walk and the fact that
$p(0)\geq 1/2$ (just consider the trajectory that reaches height $j^*$
and sticks there: its weight grows like $p(0)^Le^{\epsilon_{j^*}L}$).

Letting
\begin{align}
  \label{eq:14}
  W_L(i,j)=\sum_{\gamma\ge0, \gamma_0=i,\gamma_L=j}w(\gamma)
e^{\Phi(\gamma)}
\end{align}
one checks immediately that
\begin{align}
  \label{eq:15}
  W_{L+1}(i,j)=\sum_{z\ge0}W_L(i,z) e^{\epsilon_z} p(z-j).
\end{align}
Thus, viewing $W_L$ as a matrix and observing that $W_1=P$, with
$P$ the symmetric matrix $P_{ij}=p(i-j)1_{i,j\ge0}$, we have
\begin{align}
  \label{eq:16}
  W_L=e^{-V}(e^{V}P)^L,
\end{align}
with $V$ the
diagonal operator
$V(i,j)=1_{i=j}{\epsilon_j}$.
Let $\varphi=(\varphi_i)_{i\ge0}$ be a vector with $\varphi_i\ge0$,
set $\psi=e^{-V/2}\varphi$ and assume that 
$(\psi, \psi)=\sum_{i\ge0}\varphi_i^2e^{-\epsilon_i}=1$.
We first claim
\begin{align}
  \label{eq:17}
  (\varphi,W_L\varphi)\geq (\varphi, P\varphi)^L,
\end{align}
where $(\cdot,\cdot)$ is the scalar product in $\ell^2(\bbZ_+)$. 
To see this observe that, if $\tilde P$ is the self-adjoint operator $\tilde P=e^{V/2}P e^{V/2}$, then
\begin{align}
  \label{eq:18}
(\varphi,  e^{-V}(e^{V}P)^L\varphi)=(\psi,(\tilde P)^L\psi).
\end{align}
Since $p(0)\geq 1/2$, one has that $P$ is non-negative definite, and so is $\tilde P$. Letting $\mu_\psi(dE)$ denote the spectral measure of
$\tilde P$ associated with $\psi$ (the total mass is $1$ since $\psi$
has unit $\ell^2(\bbZ_+)$-norm), we have
\begin{align}
  \label{eq:19}
   (\psi,(\tilde P)^L\psi)=\int_{0}^\infty E^L \mu_\psi(dE)\geq 
\left(\int_{0}^\infty E \mu_\psi(dE)\right)^L= (\psi, \tilde P\psi)^L=(\varphi,P\varphi)^L
\end{align}
where we used convexity of $x^L$ on $[0,\infty)$, for $L\geq 1$, and Jensen's inequality.

Given an integer $d\geq 0$, let $s(i)=\sin(
\pi\frac{i+1}{d+2})
$ and
\begin{align}
  \label{eq:21}
  \varphi_i=\frac{s(i)}{\sqrt K}=\frac{s(i)}{\sqrt{\sum_0^{d}
      s(i)^2\exp(-\epsilon_i)}}\quad 0\le i\le d,
\end{align}
and $\varphi_i=0$ if $i<0$ or $i>d$.
We have then 
\begin{align}
  \label{eq:22}
  (\varphi,P\varphi)=(\varphi,\varphi)-(\varphi,(1-P)\varphi)=
\frac1K\left[\sum_0^{d}s(i)^2-\frac12\sum_{i,j\le d}P_{ij}(s(i)-s(j))^2\right]
\end{align}
where we used the symmetry of $P$. Since 
\begin{align}
  \label{eq:23}
|s(i)-s(j)|\le \pi\frac{|i-j|}{d+1} 
\end{align}
we have 
\begin{align}
  \label{eq:24}
  \frac12\sum_{i,j\le d}P_{ij}(s(i)-s(j))^2\le \frac{\pi^2}2\frac{\sigma^2}{(d+1)}.
\end{align}
On the other hand, recalling that  $0\le \epsilon_j\le \log
2$ and using $\exp(-x)\le
1-x/4$ for $0\le x\le \log2$,
\begin{align}
  \label{eq:25}
  K\le \sum_0^{d/2}s(i)^2(1-\epsilon_i/4)+\sum_{d/2+1}^{d}s(i)^2.
\end{align}
Finally, since $s(i)\geq c (i+1)/(d+1)$ for $0\le i\le d/2$ for some
positive constant $c$, we conclude that
\begin{align}
  \label{eq:26}
  (\varphi,P\varphi)\ge\frac{\sum_0^{d}s(i)^2-\frac{\pi^2}2\frac{\sigma^2}{(d+1)}}{\sum_0^{d}s(i)^2-\frac
    c{4(d+1)^2}\sum_0^{d/2}
  (i+1)^2\epsilon_i}.
\end{align}

\begin{proof}[Proof of \eqref{cpinwallao}]
  Assume by monotonicity that we have equality in
  \eqref{bocpinwallb}. Then we go back to \eqref{eq:26}, we observe
  that $\sum_0^{d}s(i)^2$ grows linearly in $d$ and that we can assume
  that $d$ is much larger than $\sigma^2/d$. Then we obtain, for some
  universal positive constant $C$,
  \begin{eqnarray}
    \label{eq:27}
     (\varphi,P\varphi)\geq 1-C\frac{\sigma^2}{(d+1)^2}+\frac
     aC\frac{\sigma^2}{(d+1)^2}\geq 
1+\frac
     a{2C}\frac{\sigma^2}{(d+1)^2}
  \end{eqnarray}
if the value of $a$ in Theorem \ref{th2} is chosen large enough.
Recalling \eqref{eq:17} we see that there exist $i,j\le d$ and 
positive constants $c(d),a'$ such that
\begin{align}
  \label{eq:29}
W_L(i,j)\geq c(d)\exp\left[L a'\frac{\sigma^2}{(d+1)^2}\right].  
\end{align}
In turn, using the assumption $p(1)>0$, we obtain
\begin{align}
  \label{eq:30}
  W_L(0,0)\geq p(1)^{i+j}c'(d) \exp\left[L a'\frac{\sigma^2}{(d+1)^2}\right],
\end{align}
for  some new constant $c'(d)>0$. This implies the desired lower bound \eqref{cpinwallao}.
\end{proof}

\begin{proof}[Proof of \eqref{pinwalla}]

By vertical translation invariance, we can assume that the walk is conditioned
to stay non-negative (instead of $\gamma \geq -j$) and that the pinning is at height $j$,
i.e. $\varepsilon_i=\epsilon 1_{i=j}$.
The proof of \eqref{pinwalla} is then essentially identical to the proof of \eqref
{cpinwallao}, once the choice
 $d=2j$ is made in the definition of $s(\cdot)$ above. 
\end{proof}

\subsection{Proof of the upper bound in Theorem \ref{th1}}\label{proofubj}
Here we prove \eqref{pinwallb}. Define 
\begin{align}\label{oe+j}
Z^{\e,+,j}_{0,L} = \sum_{\g\in\O_{0,L}} e^{\e N(\g)}w(\g)\IND(\g\in\O^{+,j}_{0,L}).
\end{align}
Thus, \eqref{pinwallb} can be restated as follows
\begin{proposition}\label{propoz}
There exists a universal constant $b>0$ such that if $\e\leq b\si^2/(j+1)$, then uniformly in $L,\si^2,j$ one has 
\begin{align}\label{propoz1}
Z^{\e,+,j}_{0,L} \leq 2 Z_{0,L}. 
\end{align}
\end{proposition}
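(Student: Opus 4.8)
The plan is to prove $Z^{\e,+,j}_{0,L}\le 2\,Z_{0,L}$ for every $L$ by a strong induction on $L$, built on the first‑return (excursion) decomposition and split at a ``diffusive'' scale $\ell=\ell(j,\si^2)$ of order $(j+1)^2/\si^2$ — the scale on which the fluctuations of a bridge of variance $\si^2$ first reach the wall at $-j$. Introduce $f^{+,j}_m$, the total weight of a first excursion from $0$ back to $0$ of length $m$ that stays $\ge -j$, and its unconstrained analogue $g_m$, so $f^{+,j}_m\le g_m$, $\sum_m g_m=1$, and, with $\cF(z)=\sum_m f^{+,j}_m z^m$, $\cG(z)=\sum_m g_m z^m$, one has the renewal identities
\[
Z^{\e,+,j}_{0,L}=f^{+,j}_L+e^{\e}\sum_{m=1}^{L-1}f^{+,j}_m Z^{\e,+,j}_{0,L-m},\qquad Z_{0,L}=g_L+\sum_{m=1}^{L-1}g_m Z_{0,L-m},
\]
equivalently $\sum_L Z^{\e,+,j}_{0,L}z^L=\cF(z)/(1-e^{\e}\cF(z))$. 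The key preliminary fact I would establish is that the tilted kernel is \emph{strictly} subcritical, $e^{\e}\cF(1)<1$: since $1-\cF(1)=\bbP_0\bigl(\text{the walk reaches }(-\infty,-j-1]\text{ before returning to }0\bigr)$, conditioning on a first step to $-1$ (probability $p(-1)=p(1)\ge\tfrac12 c_0\si^2$) and applying optional stopping to the martingale $\gamma$ stopped at the first entry into $\{0,1,\dots\}\cup(-\infty,-j-1]$, with the overshoot controlled by $\sum_k|k|^3p(k)\le c_0^{-1}\si^2$, gives $1-\cF(1)\ge c\,\si^2/(j+1)$ for a universal $c>0$; hence, choosing $b<c/2$, $e^{\e}\cF(1)\le(1+2\e)\bigl(1-c\,\si^2/(j+1)\bigr)<1$ whenever $\e\le b\,\si^2/(j+1)$.

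For the base case $L\le\ell$ I would simply drop the wall: $Z^{\e,+,j}_{0,L}\le\sum_{\gamma\in\O_{0,L}}w(\gamma)e^{\e N(\gamma)}=Z_{0,L}\,\bbE_{0,L}[e^{\e N}]$, so it suffices to prove $\bbE_{0,L}[e^{\e N}]\le 2$. A quantitative local limit theorem valid uniformly over $p\in\cP(\si^2)$ — which the moment bounds of Definition \ref{def:rw} yield through the characteristic‑function estimates $c_1\si^2\theta^2\le 1-\widehat p(\theta)\le\tfrac12\si^2\theta^2$ and $\widehat p\ge 0$ on $[-\pi,\pi]$ — gives $c/(\si\sqrt L)\le\bbP_0(\gamma_L=0)\le C/(\si\sqrt L)$, hence $\bbE_{0,L}[N]=\sum_{i=1}^{L-1}\bbP_0(\gamma_i=0)\bbP_0(\gamma_{L-i}=0)/\bbP_0(\gamma_L=0)\le C'\si^{-1}\sqrt L$, so that $\e\,\bbE_{0,L}[N]\le C''b$ for $L\le\ell$. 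Together with a Gaussian‑type upper tail $\bbP_{0,L}(N\ge t)\le C\exp\bigl(-c\,t^2/(\bbE_{0,L}[N])^2\bigr)$ — read off from the local limit theorem and the renewal structure of the returns, whose inter‑arrival law has index‑$\tfrac12$ heavy tails — this yields $\bbE_{0,L}[e^{\e N}]\le 1+C'''b\le 2$ once $b$ is small enough. This is the only place where the smallness of $b$ is needed.

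For the inductive step $L>\ell$ I would decompose each admissible trajectory along the excursion $[S,T]$ straddling the reference time $\ell$ (and, symmetrically, the one straddling $L-\ell$): this expresses $Z^{\e,+,j}_{0,L}$ as a sum over such configurations of products of boundary partition functions $Z^{\e,+,j}_{0,S}$ and $Z^{\e,+,j}_{0,L-T}$ with $S\le\ell$, $L-T<\ell$ (bounded by the base case), of the excursion weights $f^{+,j}_m$ (recall $f^{+,j}_m\le g_m$), and of at most one interior factor $Z^{\e,+,j}_{0,t}$ with $t<L$ (bounded by the inductive hypothesis); replacing every $f^{+,j}$ by $g$ identifies the result with the \emph{same} decomposition of $Z_{0,L}$, so the induction closes as soon as the accumulated constant stays $\le 2$. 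Strict subcriticality pays off a second time here: the generating function $\bigl(1-e^{\e}\sum_{m\le\ell}f^{+,j}_m z^m\bigr)^{-1}$ of a run of short ($\le\ell$) excursions is rational with radius of convergence $>1$, so such a run contributes a geometrically small factor on scales $\gg\ell$, while $\sum_{m>\ell}f^{+,j}_m$ is smaller than $\sum_{m>\ell}g_m$ by a fixed fraction (a long excursion staying $\ge -j$ is exponentially suppressed in $m/\ell$, and up‑excursions dominate the down‑excursions that can approach the wall); a careful accounting of these gains against the $e^{\e}$ tilts keeps the constant equal to $2$. Alternatively, once $e^{\e}\cF(1)<1$, a non‑asymptotic transfer estimate based on the $m^{-3/2}$ decay of $f^{+,j}_m$ gives $Z^{\e,+,j}_{0,L}\le C\bigl(1-e^{\e}\cF(1)\bigr)^{-2}\si\,L^{-3/2}\asymp (\ell/L)\,Z_{0,L}$ for $L\gg\ell$, which is $\le 2\,Z_{0,L}$.

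The hard part will be the base case: getting $\bbE_{0,L}[e^{\e N}]\le 2$ on the \emph{entire} range $L\le\ell$ with a constant uniform in $\si^2\in(0,\tfrac12]$, in $j$, and in $p\in\cP(\si^2)$. This forces control of $\bbP_0(\gamma_n=0)$ that is quantitative already at times $n\sim\si^{-2}$ — where the walk, very lazy when $\si^2$ is small, only begins to move — together with matching control of the mean and of the exponential tail of the number of returns to $0$; the assumptions $p(1)\ge\tfrac12 c_0\si^2$ and $\sum_k|k|^3p(k)\le c_0^{-1}\si^2$ defining $\cP(\si^2)$ are precisely what make these estimates uniform. The inductive step, in comparison, is soft once the gap $1-e^{\e}\cF(1)\gtrsim\si^2/(j+1)$ and the scale $\ell$ are in hand.
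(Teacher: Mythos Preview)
Your overall architecture---base case at the diffusive scale $\ell\asymp(j+1)^2/\si^2$ plus induction beyond it---matches the paper's, and your base case is essentially the paper's Lemma~\ref{small} (though the paper proves $\bbE_{0,L}[e^{b\si N/\sqrt L}]\le 1+cb$ by direct expansion of the product $\prod(1+2b\si\IND(\g_i=0)/\sqrt L)$ and the local CLT, which is cleaner than bounding the mean and then invoking a Gaussian tail for $N$).

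The inductive step, however, has a real gap. In your strong induction, decomposing around the excursion straddling $\ell$ and the one straddling $L-\ell$ produces \emph{three} factors to which you want to apply the hypothesis (prefix, interior, suffix), each contributing a $2$ and each with its own $e^{\e}$; naively this gives $8e^{3\e}$, not $2$. You appeal to two compensation mechanisms---the defect in $\sum_{m\le\ell}f^{+,j}_m$ and a claimed suppression of long excursions---but neither is quantified. In particular, ``a long excursion staying $\ge -j$ is exponentially suppressed in $m/\ell$'' is false as stated: upward excursions are unaffected by the wall, so $f^{+,j}_m/g_m\to\tfrac12$, not $0$, as $m\to\infty$. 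Your alternative route via the defective-renewal transfer estimate $Z^{\e,+,j}_{0,L}\le C(1-e^{\e}\cF(1))^{-2}\si L^{-3/2}$ is morally right, but making it \emph{non-asymptotic and uniform} in $j,\si^2$ and $p\in\cP(\si^2)$ (you need it already at $L\asymp\ell$, where the putative $L^{-3/2}$ behavior has barely kicked in) is the whole difficulty, and nothing you wrote supplies it.

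The paper closes the induction by a different and much cheaper device. It runs a \emph{doubling} scheme: assume $Z^{\e,+,j}_{0,L'}\le(1+\d)Z_{0,L'}$ for all $L'\le L_n:=2^nL_0$, take $L\in(L_n,L_{n+1}]$, and decompose at the \emph{midpoint} $L/2$ into the last zero $\xi\le L/2$ and the first zero $\eta>L/2$. The two outer pieces $[0,\xi]$ and $[\eta,L]$ have length $\le L/2\le L_n$, so the hypothesis applies with only two factors of $(1+\d)$; the middle piece $\Xi_{\xi,\eta}$ carries no pinning (no zeros in its interior) but still carries the constraint $\g\ge -j$. Summing over $\xi,\eta$ one reconstructs at most $Z_{0,L}\,\bbP_{0,L}(\g_{\lfloor L/2\rfloor}\ge -j)$, and for $L\ge C(j+1)^2/\si^2$ a local CLT (Proposition~\ref{prop:2}(b)) gives $\bbP_{0,L}(\g_{\lfloor L/2\rfloor}\ge -j)\le 3/4$ uniformly. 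Hence
\[
Z^{\e,+,j}_{0,L}\le \tfrac34(1+\d)^2e^{2\e}\,Z_{0,L}\le(1+\d)Z_{0,L}
\]
for $\d,b$ small, and the induction propagates with the \emph{same} constant. The whole point is that the single factor $3/4$---coming from the wall's effect on the \emph{excursion straddling $L/2$}, not on the renewal kernel as a whole---absorbs the two $(1+\d)$'s and the $e^{2\e}$. Your subcriticality estimate $e^{\e}\cF(1)<1$ is correct and pleasant, but it is neither used nor needed.
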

The proof of Proposition \ref{propoz} is divided in two steps. We start with a general lemma for the walk with no wall constraint. This will allow us to cover the region $L\leq C  (j+1)^2/\si^{2}$ for any constant $C>0$. 
\begin{lemma}\label{small}
There exists $c>0$ such that for any $L$ and $\si^2\in(0,\tfrac12]$, any $p\in\cP(\si^2)$, 
for any $b<1/c$ one has
\begin{align}\label{check2}
\bbE_{0,L}
\big[e^{b \si N/\sqrt {L}}\big]\leq 1+c\,b.
\end{align}
\end{lemma}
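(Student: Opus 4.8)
The plan is to split according to the size of $\si\sqrt L$, the dividing scale being a universal constant $K_0$ coming from the local limit theorem. Write $\lambda:=b\si/\sqrt L$ and let $r_n:=\bbP_0(\g_n=0)$, so that $Z_{0,L}=r_L$. When $\si\sqrt L\leq K_0$ there is nothing to do: since $N\leq L-1$ pointwise, $\bbE_{0,L}[e^{\lambda N}]\leq e^{\lambda(L-1)}\leq e^{b\si\sqrt L}\leq e^{bK_0}\leq 1+2K_0b$, the last step being valid as soon as $b\leq 1/K_0$. So assume from now on $\si\sqrt L> K_0$.

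In this range I would expand $e^{\lambda N}=\prod_{i=1}^{L-1}\bigl(1+(e^\lambda-1)\1_{\g_i=0}\bigr)$ and take the bridge expectation. By the Markov property, the probability that $\g$ visits $0$ at a prescribed set of times $0<i_1<\dots<i_k<L$ factorizes into a product of return probabilities over the consecutive gaps, and one obtains
\begin{align}\label{sksm}
\bbE_{0,L}[e^{\lambda N}]=1+\frac1{r_L}\sum_{k\geq1}(e^\lambda-1)^k A_k,\qquad A_k:=\sumtwo{\ell_1,\dots,\ell_{k+1}\geq1}{\ell_1+\cdots+\ell_{k+1}=L}\ \prod_{s=1}^{k+1}r_{\ell_s}.
\end{align}
I would then invoke the standard local limit estimates, uniform over $p\in\cP(c_0,\si^2)$ and routine consequences of the moment bounds in Definition \ref{def:rw} via a characteristic-function computation: there are universal $C_1,c_1>0$ (depending only on $c_0$) such that $r_n\leq C_1/(\si\sqrt n)$ for every $n\geq1$, while $r_L\geq c_1/(\si\sqrt L)$ whenever $\si\sqrt L$ exceeds a universal constant — which is exactly our $K_0$. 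From the upper bound, $A_k\leq (C_1/\si)^{k+1}S_{k+1}(L)$ where $S_m(L):=\sum_{\ell_1+\cdots+\ell_m=L,\ \ell_i\geq1}\prod_{i=1}^m\ell_i^{-1/2}$.

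The one computation that is not pure bookkeeping is the elementary bound $S_m(L)\leq C^m L^{m/2-1}$ for all $m\geq1$, $L\geq1$, with a universal $C$. I would prove it by induction on $m$ from $S_{m+1}(L)=\sum_{\ell\geq1}\ell^{-1/2}S_m(L-\ell)$ together with the estimate $\sum_{\ell=1}^{L-1}\ell^{-1/2}(L-\ell)^{m/2-1}\leq 4L^{(m-1)/2}$, which follows by splitting the sum at $\ell=L/2$ and comparing each half with an integral. Plugging this into \eqref{sksm}, using $e^\lambda-1\leq 2\lambda$ (legitimate since $\lambda\leq b$ is small) and $\lambda=b\si/\sqrt L$, each summand simplifies to $(e^\lambda-1)^kA_k\leq (2b)^k(C_1C)^{k+1}(\si\sqrt L)^{-1}$; for $b$ below a universal threshold the geometric series in $k$ converges and is at most $4(C_1C)^2\,b/(\si\sqrt L)$, so dividing by $r_L\geq c_1/(\si\sqrt L)$ gives $\bbE_{0,L}[e^{\lambda N}]\leq 1+4(C_1C)^2c_1^{-1}\,b$. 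Taking $c$ to be the largest of the finitely many universal constants appearing here together with the reciprocals of the thresholds imposed on $b$ completes the proof.

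The genuinely delicate ingredient is the uniform local limit theorem — the two-sided control of $r_n$ by $(\si\sqrt n)^{-1}$ with constants depending on $c_0$ only and not on $\si$, $L$ or the particular kernel; once that is in hand, the combinatorial bound on $S_m(L)$ is the only other non-immediate step and the rest is the geometric-series bookkeeping above.
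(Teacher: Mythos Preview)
Your proposal is correct and follows essentially the same route as the paper: both split off the trivial regime $\si\sqrt L\le K_0$, expand the product $\prod_i(1+(\text{small})\IND_{\g_i=0})$, factor the bridge probability of hitting zero at $x_1<\cdots<x_k$ into a product of return probabilities, invoke the uniform two-sided local-CLT bounds $c_1/(\si\sqrt n)\le r_n\le C_1/(\si\sqrt n)$ (the paper's Proposition \ref{prop:2}), and reduce to the combinatorial estimate $\sum_{\ell_1+\cdots+\ell_m=L}\prod_i\ell_i^{-1/2}\le C^m L^{m/2-1}$ followed by a geometric series. The only cosmetic difference is that you prove the combinatorial bound by induction via a beta-integral comparison, whereas the paper handles the last two variables together (the arcsine sum is $O(1)$) and bounds the remaining $m-2$ sums independently by $\sum_{\ell\le L}\ell^{-1/2}\le C\sqrt L$; both are standard and yield the same conclusion.
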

\begin{proof}
Writing $N$ as in \eqref{njo}, for $\g\in\O_{0,L}$ one has the expansion
\begin{align}\label{expando}
e^{b \si N(\g)/\sqrt {L}} & = \prod_{i=1}^{L-1}e^{b\si \IND(\g_i=0)/\sqrt {L}}\\
&\leq \prod_{i=1}^{L-1}(1+2b\si \IND(\g_i=0)/\sqrt {L})\\& = 1+\sum_{n=1}^{L-1}(2b\si L^{-1/2})^n
\!\!\!\sum_{x_1<\cdots<x_n}\IND(\g_{x_1}=\dots\g_{x_n}=0),
\end{align}
where we use $e^a\leq 1+ 2a$ for $a\in (0,1)$, and the sum ranges over all possible positions of the internal zeros. We also set $x_0=0, x_{n+1}=L$. We can assume that $\si^2 L$ is large, since otherwise the statement becomes obvious by estimating $N\leq L$. 
Observe that
\begin{equation}\label{alocalzeros}
\bbP_{0,L}(\g_{x_1}=\dots\g_{x_n}=0)=\frac{Z_{0,x_1}\cdots Z_{x_{n},L}}{Z_{0,L}},
\end{equation}
where we write $Z_{u,v}= Z_{0,v-u}$. From 
Proposition \ref{prop:2}  in the appendix  one has the local CLT estimates
\begin{equation}\label{alocalclt}
Z_{x_i,x_{i+1}}\leq C\,\si^{-1}(x_{i+1}-x_{i})^{-1/2},
\end{equation}
and $Z_{0,L}\geq C^{-1}\si^{-1}L^{-1/2}$, for some absolute constant $C>0$. 
It follows that 
\begin{align}\label{localzeros}
\bbP_{0,L}(\g_{x_1}=\dots\g_{x_n}=0)&\leq C^n\,\si\sqrt L\prod_{i=1}^{n+1}\si^{-1}(x_{i}-x_{i-1})^{-1/2}
\\& = C^n\si^{-n} \sqrt L\prod_{i=1}^{n+1}(x_{i}-x_{i-1})^{-1/2},
\end{align}
for some absolute constant $C>0$.
Therefore, 
\begin{align}\label{expando1}
\bbE_{0,L}\big[e^{b \si N/\sqrt {L}}\big]
 & \leq  1 + 
\sum_{n=1}^{L-1}(2bC)^nL^{-\frac{n-1}2}\!\!\!
\sum_{x_1<\cdots<x_n}\prod_{i=1}^{n+1}(x_{i}-x_{i-1})^{-1/2}.
\end{align}
It remains to check that for any $n\geq 1$, for some new absolute constant $C>0$ one has: 
 \begin{align}\label{expando2}
\sum_{x_1<\cdots<x_n}\prod_{i=1}^{n+1}(x_{i}-x_{i-1})^{-1/2}\leq C^n L^{\frac{n-1}2}.
\end{align}
Once \eqref{expando2} is available, it is immediate to conclude that
\eqref{check2} holds if $b$ is small.

To prove \eqref{expando2} 
we change variables to $\xi_i=x_{i}-x_{i-1}\in\bbN$, $i=1,\dots,n+1$. Thus 
 \begin{align}\label{expando3}
\sum_{x_1<\cdots<x_n}\prod_{i=1}^{n+1}(x_{i}-x_{i-1})^{-1/2} = 
\sum_{\xi_1,\dots,\xi_{n+1}:\,\sum_{j=1}^{n+1}\xi_j=L} (\xi_1\cdots \xi_{n+1})^{-1/2}. 
\end{align}
One easily checks that there is $C>0$ independent of $n,L$ such that for any fixed values of $\xi_1,\dots,\xi_{n-1}$ one has
$$
\sum_{\xi_{n},\xi_{n+1}:\,\xi_n+\xi_{n+1}=L-\sum_{i=1}^{n-1}\xi_i} (\xi_n \xi_{n+1})^{-1/2}\leq C.
$$ 
Therefore the sum in \eqref{expando3} can be bounded by
 \begin{align}\label{expando03}
C\prod_{j=1}^{n-1}\sum_{1\leq \xi_j\leq L} \xi_j^{-1/2}\leq (C')^nL^{\frac{n-1}2},
\end{align}
for some new constant $C'>0$.
%
\end{proof}

Lemma \ref{small} implies that for any small constant $b$, 
taking $L\leq (j+1)^2/\si^2$, and $\e\leq b\si^2/(j+1)$, then 
\begin{align}\label{smao}
Z^{\e,+,j}_{0,L}
\leq Z_{0,L}\bbE_{0,L}
\big[e^{\e N}\big] \leq (1+c\,b) Z_{0,L},
 \end{align}
where the first inequality follows by dropping the wall constraint $\g\geq -j$, while the second one is implied by \eqref{check2}.  

\begin{definition}\label{defhn}
Fix $L_0 = \frac C2(j+1)^2/\si^2$ (with $C$ the same constant as in
Proposition \ref{prop:2})  and set $L_n: = 2^n L_0$. For $n\in\bbN$, $\d>0$, and $b>0$, let $\cH_n(\d,b)$ denote the following statement: 
\[
Z^{\e,+,j}_{0,L}\leq (1+\d)  Z_{0,L} \quad \forall L\in [1,L_n],
\]
where $\e=b\si^2/(j+1)$.
\end{definition}
From \eqref{smao} we know that for any $\d>0$, there is some $b_0(\d)>0$ such  that $\cH_1(\d,b)$ holds for all $b\leq b_0(\d)$. The proof of Proposition \ref{propoz} is then completed by the following induction.
 
\begin{lemma}\label{indu}
There exist $\d\in(0,1)$, and $b\in(0,b_0(\d))$ such that,  for all $n\geq 1$, $\cH_n(\d,b)$ implies $\cH_{n+1}(\d,b)$.
\end{lemma}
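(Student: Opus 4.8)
The plan is to run an excursion (renewal) decomposition of each path at the two visits to level $0$ that straddle the midpoint of $[0,L]$, to use $\cH_n$ on the two end pieces, and to gain a constant factor from the middle excursion because the wall at height $-j$ genuinely constrains it. Fix $L\in(L_n,L_{n+1}]$ and set $L_1=\lceil L/2\rceil$, so that $L_1\le L_n$ and $L-L_1\le L_n$. For $\gamma\in\O^{+,j}_{0,L}$ let $s$ be the last time in $[0,L_1]$ with $\gamma_s=0$ and $t$ the first time in $(L_1,L]$ with $\gamma_t=0$; both exist since $\gamma_0=\gamma_L=0$, and $\gamma$ has no zero strictly between $s$ and $t$. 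Let $q^{(j)}_m$ denote the total weight of paths of length $m$ from $0$ to $0$ that stay $\ge-j$ and hit $0$ only at the endpoints, and $q_m$ the same quantity without the wall constraint. The map $\gamma\mapsto(\gamma|_{[0,s]},\gamma|_{[s,t]},\gamma|_{[t,L]})$ is a bijection onto triples of the natural form, and since $s$ (resp.\ $t$) is an interior zero of $\gamma$ unless $s=0$ (resp.\ $t=L$) we have $e^{\e N(\gamma)}\le e^{2\e}e^{\e N(\gamma|_{[0,s]})}e^{\e N(\gamma|_{[t,L]})}$; hence
\[
Z^{\e,+,j}_{0,L}\ \le\ e^{2\e}\sum_{s\le L_1<t\le L}Z^{\e,+,j}_{0,s}\,q^{(j)}_{t-s}\,Z^{\e,+,j}_{0,L-t}\,,
\]
while the same decomposition of the unconstrained, unweighted bridge gives the exact identity $Z_{0,L}=\sum_{s\le L_1<t\le L}Z_{0,s}\,q_{t-s}\,Z_{0,L-t}$.

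Applying $\cH_n(\d,b)$ to the two end factors (their lengths $s$ and $L-t$ are $\le L_n$) reduces the matter to showing, for some universal $\kappa>0$, that
\[
\sum_{s\le L_1<t\le L}Z_{0,s}\,q^{(j)}_{t-s}\,Z_{0,L-t}\ \le\ (1-\kappa)\,Z_{0,L}\,,
\]
since then $Z^{\e,+,j}_{0,L}\le e^{2\e}(1+\d)^2(1-\kappa)Z_{0,L}\le(1+\d)Z_{0,L}$ as soon as $\d$ and $b$ (hence $\e=b\si^2/(j+1)$) are small enough — say $\d\le\kappa/4$ and $e^{2\e}\le(1-\kappa/2)^{-1}$ — which are conditions uniform in $j$, $\si^2\in(0,\tfrac12]$ and $p\in\cP(\si^2)$. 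I would prove the displayed inequality by splitting the sum at the excursion length $m=t-s=L_0$. Writing $q^{(j)}_m=q_m-q^{\,\flat}_m$, where $q^{\,\flat}_m$ is the weight of length-$m$ first-return excursions that dip below $-j$, the reflection $\gamma\mapsto-\gamma$ gives that $q^{\,\flat}_m$ equals the weight of length-$m$ first-return excursions that rise above $+j$; for $m\ge L_0$, which is of order $(j+1)^2/\si^2$, such an excursion reaches height of order $\sqrt{\si^2 m}\gtrsim j+1$, so by the uniform local limit theorem / invariance principle it exceeds $+j$ — and hence dips below $-j$ — with probability at least a universal $\kappa_0>0$, i.e.\ $q^{(j)}_m\le(1-\kappa_0)q_m$. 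For $m<L_0$ I would use only $q^{(j)}_m\le q_m$ together with the local CLT bound $\sum_{s\le L_1<t\le L,\,t-s<L_0}Z_{0,s}q_{t-s}Z_{0,L-t}\le C\sqrt{L_0/L}\,Z_{0,L}$, which rests on the first-return tail estimate $q_m\le C\si\,m^{-3/2}$ for $m\ge2$ and on $Z_{0,m}\asymp\si^{-1}m^{-1/2}$. Combining, the left side above is $\le\bigl(1-\kappa_0+\kappa_0 C\sqrt{L_0/L}\,\bigr)Z_{0,L}$, which for $L\ge L_n=2^nL_0$ is $\le(1-\kappa_0/2)Z_{0,L}$ once $C\,2^{-n/2}$ is small, i.e.\ for all $n\ge n_0$ with $n_0$ a universal constant.

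For the finitely many $n<n_0$ the induction is not needed at all: shrinking $b$ by a universal amount (since $n_0$ is universal), the base-case estimate \eqref{smao}, obtained from Lemma \ref{small}, already gives $Z^{\e,+,j}_{0,L}\le(1+\d)Z_{0,L}$ for every $L\le L_{n_0}$, so $\cH_{n+1}(\d,b)$ holds outright and the implication $\cH_n\Rightarrow\cH_{n+1}$ is trivial. Thus a single choice of $\d\in(0,1)$ and $b\in(0,b_0(\d))$, with $\d$ and $b$ small, works for all $n\ge1$. The main obstacle is the input used in the second paragraph: establishing, uniformly over all $p\in\cP(\si^2)$, all $\si^2\in(0,\tfrac12]$ and all $j\ge0$, both the gain $q^{(j)}_m\le(1-\kappa_0)q_m$ for $m\ge L_0$ and the smallness $\sum_{t-s<L_0}Z_{0,s}q_{t-s}Z_{0,L-t}\le C\sqrt{L_0/L}\,Z_{0,L}$. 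This is precisely where the careful local limit theorem estimates of the appendix (Proposition \ref{prop:2}) and the third-moment bound in Definition \ref{def:rw} enter, and where the choice of the diffusive scale $L_0\asymp(j+1)^2/\si^2$ is used in an essential way.
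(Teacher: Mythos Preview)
Your decomposition at the last/first zero straddling the midpoint and the use of $\cH_n$ on the two end pieces are exactly what the paper does. The divergence is only in how you extract the contraction factor from the middle piece.

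The paper's route is much shorter. Since both $\lfloor L/2\rfloor$ and $\lfloor L/2\rfloor+1$ lie in the interval $[s,t]$ on which the excursion is constrained to stay $\ge -j$, one has immediately
\[
\sum_{s\le L/2<t}Z_{0,s}\,q^{(j)}_{t-s}\,Z_{0,L-t}\ \le\ Z_{0,L}\,\bbP_{0,L}\bigl(\g_{\lfloor L/2\rfloor}\ge -j,\ \g_{\lfloor L/2\rfloor+1}\ge -j\bigr)\ \le\ \tfrac34\,Z_{0,L}
\]
for all $L\ge L_1$, the last inequality being the single Berry--Esseen estimate of Proposition~\ref{prop:2}(b). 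That is the entire argument; no case analysis on $m$, no $n_0$, no shrinking of $b$.

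Your route, by contrast, splits on the excursion length and leans on two inputs that are \emph{not} contained in Proposition~\ref{prop:2}: a first-return tail bound $q_m\le C\si\,m^{-3/2}$ uniform over $\cP(\si^2)$, and a uniform excursion-height estimate $q^{(j)}_m\le(1-\kappa_0)q_m$ for $m\ge L_0$. Both are believable and provable with extra work, but neither is in the paper, and the first-return bound as stated is false for small $m$ (e.g.\ $q_1=p(0)\ge 1/2$), which is why you are forced into the short-excursion sum and the separate treatment of $n<n_0$. In short, your sketch is sound, but it replaces the paper's one-line inclusion argument by an excursion-theoretic analysis that would need its own appendix; the paper's version is preferable precisely because it reduces everything to the single midpoint estimate already available.
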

\begin{proof} Fix $\d>0$ and $L\in(L_{n},L_{n+1}]$, and assume the validity of $\cH_n(\d,b)$.  
Define $\xi$ as the last zero of the walk up to $L/2$, and $\eta$ as the first 
zero of the walk beyond $L/2$:
\begin{align}\label{xieta}
\xi(\g)= \max\{i\leq L/2: \;\g_i=0\}\,,\quad \eta(\g)= \min\{i> L/2: \;\g_i=0\}.
\end{align} 
Then
\begin{align}\label{indu1}
Z^{\e,+,j}_{0,L} &= \sum_{x\leq L/2}\sum_{y>L/2}\sum_{\g\in\O_{0,L}} 
e^{\e N(\g)}w(\g)\IND(\g\in\O^{+,j}_{0,L})\IND(\xi(\g)=x,\eta(\g)=y)
\nonumber\\
& \leq  \sum_{x\leq L/2}\sum_{y>L/2}
e^{2\e} Z_{0,x}^{\e,+,j}\Xi_{x,y}Z_{y,L}^{\e,+,j},
\end{align}
where we use the notation $Z_{y,L}^{\e,+,j} = Z_{0,L-y}^{\e,+,j}$ with the convention that $Z_{0,0}^{\e,+,j}=Z_{L,L}^{\e,+,j}=1$ and we define 
\begin{align}\label{indu10}
\Xi_{x,y}:=\sum_{\g:\,x\to y}\IND(N(\g)=0,\g\geq -j)\prod_{i=x+1}^{y}p(\g_{i}-\g_{i-1}).
\end{align}
Here $\g:\,x\to y$ means that $\g = \{\g_i\,,\;i=x,\dots,y\}$ is a path such that $\g_x=\g_y=0$. 
Since $x\in[0,L/2]$,  $L-y\in[0,L/2)$, one has $Z_{0,x}^{\e,+,j}\leq (1+\d) Z_{0,x}$ and $Z_{y,L}^{\e,+,j}\leq (1+\d)Z_{y,L}$. Therefore \eqref{indu1} implies
\begin{align}\label{indu2}
Z^{\e,+,j}_{0,L} \leq  (1+\d)^2 e^{2\e}\!\!\!\sum_{x\leq L/2}\sum_{y>L/2}
 Z_{0,x}\Xi_{x,y}Z_{y,L}.
\end{align}
Let $E$ denote the event that $\g_{\lfloor L/2\rfloor} \geq - j$ and $\g_{\lfloor L/2\rfloor +1} \geq - j$. Clearly,
\begin{align}\label{indu3}
\frac1{Z_{0,L}}\sum_{x\leq L/2}\sum_{y>L/2}
 Z_{0,x}\Xi_{x,y}Z_{y,L}\leq \bbP_{0,L}(E).
\end{align}
From the estimate (b) in Proposition  \ref{prop:2} in the appendix, one has that $\bbP_{0,L}(E)\leq 3/4$,
uniformly in $L\geq L_1=C(j+1)^2/\si^2$, $j\geq 0$ and $\si^2>0$. Thus, \eqref{indu2} and \eqref{indu3}
imply 
\begin{align}\label{indu4}
Z^{\e,+,j}_{0,L} \leq  \tfrac34(1+\d)^2 e^{2\e} Z_{0,L}.
\end{align}
Since $\e=b\si^2/(j+1)\leq b$, if $\d$ and $b$ are small enough one has $\tfrac34(1+\d)^2 e^{2\e}\leq 1+\d$. This implies $\cH_{n+1}(\d,b)$. \end{proof}

Next, we turn to the proof of the upper bound \eqref{pinwallbo} announced in Remark \ref{krem2}.
It can be 
restated as follows.
\begin{proposition}\label{bootstrap}
Taking $\e\leq b\si^2/(j+1)$ as in Proposition \ref{propoz} one has, for all $L,j$, for all random walk kernel $p\in\cP(\si^2)$:
\begin{align}\label{posk}
Z^{\e,+,j}_{0,L}
\leq K \,Z^{+,j}_{0,L},
 \end{align}
 where $K= K(j,p)$  is independent of $L$.
\end{proposition}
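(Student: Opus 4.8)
The plan is to place the two partition functions in a common renewal picture and compare them through the generating functions $\sum_L Z_\bullet\,s^L$. First I would introduce, for $\ell\ge 1$, the weight $q_j(\ell)$ of a first–return excursion of length $\ell$ that stays at height $\ge -j$ — i.e.\ $q_j(\ell)=\sum_\g w(\g)$ over $\g=(\g_0,\dots,\g_\ell)$ with $\g_0=\g_\ell=0$, $\g_i\ge -j$ for all $i$, and $\g_i\ne 0$ for $0<i<\ell$ — and set $\hat q_j(s)=\sum_{\ell\ge 1}q_j(\ell)s^\ell$ and $r_j:=\hat q_j(1)=\bbP_0(\text{the walk returns to }0\text{ before going below }-j)<1$. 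Cutting a path of $\O^{+,j}_{0,L}$ at its successive visits to level $0$, and using that $N(\g)$ counts the \emph{internal} zeros, gives $Z^{+,j}_{0,L}=\IND(L=0)+\sum_{m\ge 1}q_j^{*m}(L)$ and $Z^{\e,+,j}_{0,L}=\IND(L=0)+\sum_{m\ge 1}e^{\e(m-1)}q_j^{*m}(L)$, hence
\begin{align}
\sum_{L\ge 0}Z^{+,j}_{0,L}\,s^L&=\frac{1}{1-\hat q_j(s)},\\
\sum_{L\ge 0}Z^{\e,+,j}_{0,L}\,s^L&=\frac{1-(e^\e-1)\hat q_j(s)}{1-e^\e\hat q_j(s)},
\end{align}
so that $\sum_L Z^{\e,+,j}_{0,L}s^L=h(s)(1-\hat q_j(s))^{-1}$ with $h(s):=1+(e^\e-1)\hat q_j(s)^2/(1-e^\e\hat q_j(s))$. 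Expanding $(1-e^\e\hat q_j(s))^{-1}$ as a geometric series shows $h$ has non-negative Taylor coefficients $h_k$, so the above is equivalent to the pointwise identity $Z^{\e,+,j}_{0,L}=\sum_{k=0}^{L}h_k\,Z^{+,j}_{0,L-k}$, $h_k\ge 0$. Everything now reduces to controlling $\sum_k h_k$ and the tail of $(h_k)$.

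The summability $\sum_k h_k=h(1)=1+(e^\e-1)r_j^2/(1-e^\e r_j)<\infty$ is equivalent to $e^\e r_j<1$, and this is the single place where the hypothesis $\e\le b\si^2/(j+1)$ enters. I would establish $1-r_j\ge c_1(c_0)\,\si^2/(j+1)$ by a gambler's–ruin estimate using Definition \ref{def:rw}: the walk steps to $-1$ with probability $\ge\tfrac12 c_0\si^2$, and from $-1$, optional stopping on the martingale $\g$ (with overshoot controlled by $\bbE_0|\g_1|\le(c_0^{-1}\si^2)^{1/3}$) shows it exits the finite interval $\{-j,\dots,-1\}$ through the bottom — hence drops below $-j$ before returning to $0$ — with probability $\ge c(c_0)/(j+1)$. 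Therefore $\e\le b\si^2/(j+1)<-\log r_j$ as soon as the universal constant $b$ is small enough (shrinking the $b$ of Proposition \ref{propoz}, if necessary, costs nothing).

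For the tail I would use the local limit estimates of Proposition \ref{prop:2} (the content behind Remark \ref{krem1}) to fix constants $C(j,p),c(j,p)\in(0,\infty)$, independent of $L$, with $Z^{+,j}_{0,L}\le C(j,p)(L+1)^{-3/2}$ for all $L$ and $Z^{+,j}_{0,L}\ge c(j,p)(L+1)^{-3/2}$ for $L\ge L_0(j,p)$. In particular $q_j(\ell)\le Z^{+,j}_{0,\ell}\le C(j,p)(\ell+1)^{-3/2}$, and a ``single big gap'' bound — estimating $q_j^{*m}(k)$ by $m$ times the part in which the largest of the $m$ gaps is $\ge k/m$ — gives $q_j^{*m}(k)\le C(j,p)\,m^{5/2}\,r_j^{m-1}(k+1)^{-3/2}$. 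Since $h_k=(e^\e-1)\sum_{m\ge 2}e^{\e(m-2)}q_j^{*m}(k)$ for $k\ge 2$ and $\sum_{m\ge 2}e^{\e m}m^{5/2}r_j^{m-1}<\infty$ (because $e^\e r_j<1$), this yields $h_k\le C_h(j,p)(k+1)^{-3/2}$ for all $k$. Feeding the two $(\cdot)^{-3/2}$ bounds into $Z^{\e,+,j}_{0,L}=\sum_{k=0}^L h_k Z^{+,j}_{0,L-k}$ and using the elementary inequality $\sum_{k=0}^L(k+1)^{-3/2}(L-k+1)^{-3/2}\le C_*(L+1)^{-3/2}$, with $C_*$ absolute, gives $Z^{\e,+,j}_{0,L}\le C_h(j,p)C(j,p)C_*(L+1)^{-3/2}$ for every $L$; dividing by the lower bound for $L\ge L_0(j,p)$ and bounding the finitely many remaining ratios individually yields \eqref{posk} with $K=K(j,p)$ their maximum.

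The main obstacle is to see why no shorter route works and what really makes the comparison $L$–uniform. A naive strong induction $Z^{\e,+,j}_{0,L}\le K Z^{+,j}_{0,L}$ fails, since the $e^\e$ factors accumulate over the (a priori $\Theta(L)$) returns to $0$; and even the exact recursion $Z^{\e,+,j}_{0,L}=q_j(L)+e^\e\sum_{\ell=1}^{L-1}q_j(\ell)Z^{\e,+,j}_{0,L-\ell}$, fed with the only $L$–free input available at the outset — the crude bound $Z^{\e,+,j}_{0,L}\le 2Z_{0,L}\le C\si^{-1}(L+1)^{-1/2}$ of Proposition \ref{propoz} (and Proposition \ref{prop:2}) — only regenerates a $(L+1)^{-1/2}$ bound, far from the required $(L+1)^{-3/2}$ decay. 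The decisive inputs are (i) that the defect kernel is summable, $\sum_k h_k<\infty$, which is exactly the condition $e^\e r_j<1$ and is sharp (if $e^\e r_j=1$ one gets $Z^{\e,+,j}_{0,L}\asymp(L+1)^{-1/2}$ and the ratio in \eqref{posk} diverges like $L$), together with (ii) the $(k+1)^{-3/2}$ decay of $h_k$ coming from the one–big–gap estimate; establishing these — and linking (i) back to the hypothesis on $\e$ via the gambler's–ruin bound $1-r_j\gtrsim \si^2/(j+1)$ — is where the real work lies.
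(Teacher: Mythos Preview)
Your argument is correct and takes a genuinely different route from the paper's. The paper does not set up a renewal/generating--function comparison at all: it starts from Proposition~\ref{propoz} (i.e.\ $Z^{\e,+,j}_{0,L}\le 2Z_{0,L}$), applies H\"older's inequality with exponent $3$ to get the intermediate estimate $Z^{\e/3,+,j}_{0,L}/Z_{0,L}\le \kappa L^{-2/3}$, and then bootstraps this to $L^{-1}$ by feeding it back into the midpoint decomposition \eqref{indu1} together with the bound $\Xi_{x,y}\le \kappa_0(y-x)^{-1}Z_{x,y}$ (from \eqref{posk1} with $j=0$) and a careful estimate of the resulting double sum; the lower bound in \eqref{posk1} then closes the argument. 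Your approach instead identifies the sharp subcriticality condition $e^\e r_j<1$ directly and never invokes Proposition~\ref{propoz}; the convolution identity $Z^{\e,+,j}_{0,L}=\sum_{k}h_k\,Z^{+,j}_{0,L-k}$ with $h_k\ge 0$, $\sum_k h_k<\infty$ and $h_k\lesssim (k+1)^{-3/2}$ is a cleaner explanation of why the ratio stays bounded. What the paper's scheme buys in return is portability: the H\"older/midpoint bootstrap transplants to the self-avoiding path setting of Section~\ref{paths}, where an exact renewal decomposition at the zeros is not available. One small caveat on your side: the overshoot control in the gambler's--ruin step is only sketched, and bounding the overshoot by $\bbE_0|\g_1|$ alone does not immediately yield the lower bound $\bbP_{-1}(\text{exit through the bottom})\ge c(c_0)/(j+1)$; one needs e.g.\ an upper bound on $\bbE_{-1}[|\g_T|\mid \text{bottom}]$ or a comparison with the linear harmonic profile on the strip, but this is a routine technical point under the third--moment hypothesis of Definition~\ref{def:rw}. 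Note also that the $L^{-3/2}$ two--sided bounds you quote for $Z^{+,j}_{0,L}$ come from combining Proposition~\ref{prop:2} with \eqref{posk1}, not from Proposition~\ref{prop:2} alone.
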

\begin{proof}
We first recall the well known bounds  (see e.g.\ \cite{Doney}):
\begin{align}\label{posk1}
c_1\,\frac{1}{L}\leq \frac{Z^{+,j}_{0,L}}{Z_{0,L}} =\bbP_{0,L}(\g \geq -j)\leq c_2\,\frac{1}{L},
 \end{align}
for some constants $c_i=c_i(j,p)>0$ independent of $L$. 

From Proposition \ref{propoz} one has $Z^{\e,+,j}_{0,L}
\leq 2Z_{0,L}$ for all $L$. 
By H\"older's inequality, for $k\in\bbN$ and $\e'= \e/k$,
\[
\frac{Z^{\e',+,j}_{0,L}}{Z_{0,L}}=\bbE_{0,L}\left[\IND(\g \geq -j)e^{\e'
  N(\g)}\right]\le 2^{\frac1k} \bbP_{0,L}(\g \geq -j)^{\frac{k-1}{k}}. 
\] 
If $k= 3$, using \eqref{posk1} to estimate $\bbP_{0,L}(\g \geq -j)$ from above, 
we obtain 
\begin{align}\label{posk2}
\frac{Z^{\e/3,+,j}_{0,L}}{Z_{0,L}}\leq \k L^{-2/3},
\end{align}
with $\k = \k(j,p)>0$. 
We claim that we can bootstrap the bound \eqref{posk2} to 
\begin{align}\label{posk3}
\frac{Z^{\e/3,+,j}_{0,L}}{Z_{0,L}}\leq \k' L^{-1},
\end{align}
for some new constant $\k'=\k'(j,p)$.
Once this is achieved, the proposition follows by using the left side in \eqref{posk1}.

To prove \eqref{posk3}, we replace $\e/3$ by $\e$ for ease of notation. Consider  the decomposition \eqref{indu1}.  The bound \eqref{posk2} implies 
$$
Z_{0,x}^{\e,+,j}Z_{y,L}^{\e,+,j} \leq \k^2 (x+1)^{-2/3}(L-y+1)^{-2/3}Z_{0,x}Z_{y,L}.
$$
Moreover, neglecting the constraint $\g\geq -j$ in \eqref{indu10} one has that  $\Xi_{x,y}\leq 2Z_{x,y}^{+,0}$. Therefore by \eqref{posk1}, now with $j=0$, one has
$$\Xi_{x,y}\leq \k_0 (y-x)^{-1}Z_{x,y},$$
for some $ \k_0=\k_0(p)>0$.
Summarizing, we obtain
\begin{gather*}
 Z^{\e,+,j}_{0,L}\leq \k^2\k_0 \sum_{x\le L/2}\sum_{y> L/2}Z_{0,x}Z_{x,y}Z_{y,L} \ (x+1)^{-2/3}(L-y+1)^{-2/3}(y-x)^{-1}.
\end{gather*}
From the local CLT estimates in \eqref{alocalclt} one has 
\[
\frac{Z_{0,x}Z_{x,y}Z_{y,L}}{Z_{0,L}}= \bbP_{0,L}(\g_x=\g_y=0)\leq \k_1
\frac{1}{\sqrt{x}}\max(\frac{1}{\sqrt{y-x}}, \frac{1}{\sqrt{L-y}}),
\]
for some $ \k_1=\k_1(p)>0$.
In conclusion, for $\k_2:=\k^2\k_0\k_1$,
\[
\frac{Z^{\e,+,j}_{0,L}}{Z_{0,L}}\leq \k_2 \sum_{x\le L/2}\sum_{y>
  L/2}\frac{1}{x^{2/3}(L-y)^{2/3}(y-x)}
\frac{1}{\sqrt{x}}\max(\frac{1}{\sqrt{y-x}}, \frac{1}{\sqrt{L-y}}).
\]
We need to show that the r.h.s.\ above is $O(1/L)$. To this end, it suffices to consider the two sums
\begin{gather*}
A:= \sum_{x\le L/2}\sum_{y>
  L/2}\frac{1}{x^{2/3}(L-y)^{2/3}(y-x)\sqrt{x}\sqrt{y-x}},\\
B:= \sum_{x\le L/2}\sum_{y>
  L/2}\frac{1}{x^{2/3}(L-y)^{2/3}(y-x)\sqrt{x}\sqrt{L-y}}.
\end{gather*}
It is not hard to see that the sum in $A$ is always $o(L^{-1})$.
On the other hand, the sum in $B$ behaves as  $1/L$ as one easily sees by restricting to 
either $x\in [1,L/4]$ or $y\in [3L/4,L]$. 
\end{proof}

\subsection{Proof of Theorem \ref{th2}}\label{proofth2}
Let $\r_j:= (b\si^2)^{-1}(j+1)\e_j$. By monotonicity we may assume that $\sum_{j=0}^\infty\r_j=1$. Then, using Jensen's inequality
\begin{align}\label{bot0}
Z^{\Phi,+,0}_{0,L}&\leq\sum_{\g\in\O^{+,0}_{0,L}}w(\g)\sum_{j=0}^\infty\r_j e^{\k_j N_j(\g)}
\nonumber \\ &= \sum_{j=0}^\infty\r_j
Z^{+,0}_{0,L}\,\bbE^{+,0}_{0,L}\left[
e^{\k_jN_j}\right],
\end{align}
where $\k_j:=\e_j/\r_j = b\si^2/(j+1)$.

Next, we show that the upper bound in Proposition \ref{propoz} implies
\begin{align}\label{bot1}
\bbE^{+,0}_{0,L}\left[
e^{\k_jN_j}\right]\leq 4\,\frac{Z_{0,L}}{Z^{+,0}_{0,L}},
\end{align} 
if $b$ is small enough, uniformly in $j,\si^2$. 
Notice that this and  \eqref{bot0} imply the desired estimate \eqref{occpinwallb}.

To prove \eqref{bot1}, observe that by a vertical  translation, Proposition \ref{propoz} refers to the case where the walk
starts and ends at level $j$, with a wall at zero. 
Thus, writing $ Z_{u,v}^{\k_j,+,j}$ for  the partition function of  the walk with pinning strength $\k_j$ at level $j$, that starts at $j$ at time $u$ and ends at $j$ at time $v$, with wall at zero, 
Proposition \ref{propoz}  yields the bound
\begin{align}\label{bot2}
Z^{\k_j,+,j}_{u,v}
\leq 2\,Z_{u,v},
\end{align}
for any $0\leq u\leq v\leq L$.
Therefore \eqref{bot1} follows by writing 
\begin{align}\label{bot012}
Z^{+,0}_{0,L}\bbE^{+,0}_{0,L}\left[
e^{\k_jN_j}\right] = \sum_{\g_1}w(\g_1) +  \sum_{\g_2}w(\g_2)e^{ \k_jN_j(\g_2)},
\end{align}
where the first sum is over all paths $\g_1\geq 0$ joining the vertices $(0,0)$ and $(L,0)$ that never reach level $j$, while the second sum is over
all paths $\g_2\geq 0$ joining the same vertices  and that touch level $j$ at least once. The first sum is trivially bounded by $Z_{0,L}$. The second sum is bounded by summing over the first and last contact with the level $j$, so that 
\begin{align}\label{ubot}
\sum_{\g_2}w(\g_2)e^{ \k_jN_j(\g_2)}\leq \sum_{u\leq v}e^{2\k_j}\hat Z^{+,0}_{0,u}Z_{u,v}^{\k_j,+,j}\hat Z^{+,0}_{v,L},
\end{align}
where $\hat Z^{+,0}_{0,u}$ stands for the sum of $w(\g)$ over all $\g\geq 0$
joining $(0,0)$ and $(u,j)$ that  never touch height $j$ before the ending point. Similarly, $\hat Z^{+,0}_{v,L}$ stands for the sum of $w(\g)$ over all $\g\geq 0$
joining $(v,j)$ and $(L,0)$ that  never touch height $j$ after the starting point. 
Using \eqref{bot2} one obtains immediately that \eqref{ubot}
is bounded by $2e^{2 \k_j}Z_{0,L}$. Since $\k_j\leq b$ it follows that \eqref{bot1}
holds as soon as $1+2e^{2b}\leq 4$. 

\section{Self-avoiding paths}
\label{paths}
In this section we prove Theorem \ref{th2p} and Theorem \ref{th22p}. The strategy follows closely the corresponding arguments in the random walk case.  

\subsection{Lower bound}\label{lowbop}
\begin{proof}[Proof of Theorem \ref{th22p}]
Recall that if we restrict to the set $\hat\O_{0,L}$ of paths with minimal number of horizontal edges then we obtain the SOS walk with parameter $\b$; see \eqref{sosvsp}. We write $Z_{0,L}, Z^{+,0}_{0,L}$ for the usual partition functions of the SOS walk with weight $w(\g)$; see \eqref{pfo} and \eqref{o+j}.
By restricting to $\hat\O_{0,L}$ we may write 
\begin{align}\label{pcheckp}
\bbE^{+,0}_{0,L}
\big[e^{\Phi}\big] = \frac{\cZ^{\Phi,+,0}_{0,L}}{\cZ^{+,0}_{0,L}} &\geq 
\frac{e^{-\b(L-1)}(Z_\beta)^L}{\cZ^{+,0}_{0,L}}\sum_{\g\in\hat\O_{0,L}}w(\g)e^{\Phi(\g)} 
\nonumber\\
& \geq \frac{Z^{+,0}_{0,L}}{Z_{0,L}}\frac{Z_{0,L}}{\cZ_{0,L}}\,e^{-\b(L-1)}(Z_\b)^{L} \,\bbE_{0,L}^{+,0,{\rm SOS}}[e^{\Phi}],
\end{align} 
where the second line follows from the obvious bound $\cZ^{+,0}_{0,L}\leq \cZ_{0,L}$ and the expectation $\bbE_{0,L}^{+,0,{\rm SOS}}$ refers now to the SOS walk.
From \cite[Eq.\ (4.8.4)]{DKS}
one has that $$\hat \cZ_{0,L} \geq \cZ_{0,L}\exp{(-c e^{-2\b} L)},$$ for some absolute constant $c>0 $ and $L$ large enough, where $\hat \cZ_{0,L}$ is defined in \eqref{sosvsp}. Therefore,
\begin{align}\label{pcheckp1}
\frac{Z_{0,L}}{\cZ_{0,L}}\,e^{-\b(L-1)}(Z_\b)^{L} =
\frac{\hat{\mathcal Z}_{0,L}}{\mathcal Z_{0,L}}\geq \exp{(-c e^{-2\b} L)},
\end{align} for some absolute constant $c>0 $ and $L$ large enough. Moreover, by \eqref{posk}
one has $Z^{+,0}_{0,L}\geq c(\b)L^{-1}Z_{0,L}$ for some constant $c(\b)>0$.
Finally, the conclusion follows from estimating from below $ \bbE_{0,L}^{+,0,{\rm SOS}}[e^{\Phi}]$ as in \eqref{cpinwallao}.

\end{proof}

\subsection{Upper bound}\label{upbop}
We consider the probability measure $\bbP_{0,L}$ on $\O_{0,L}$ defined by the weight $e^{-\b|\g|}$, and write $\bbE_{0,L}$ for the expectation w.r.t.\ $\bbP_{0,L}$. 
%
%
Call $N(\g)=N_0(\g)$ the number of contacts with the zero line, as in the definifion \eqref{clpin1p}. 
For lightness of notation, below  we set $$\si^2:=e^{-\b}.$$ We have the following version of Lemma \ref{small}.
\begin{lemma}\label{smallscalep}
There exists $c>0,\b_0>0$ such that for any $L$ and $\b\geq \b_0$, for any $0<b<1/c$ one has
\begin{align}\label{check20p}
\bbE_{0,L}
\big[e^{b\, \si N(\g)/\sqrt {L}}\big]\leq 1+c\,b.
\end{align}
\end{lemma}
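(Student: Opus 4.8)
The plan is to follow the structure of the proof of Lemma \ref{small}, replacing the local CLT estimates for the SOS random walk kernel by the analogous estimates for the self-avoiding path partition functions $\cZ_{u,v}$. Concretely, for $\g\in\O_{0,L}$ I would expand
\[
e^{b\si N(\g)/\sqrt L}=\prod_{i=1}^{L-2}e^{b\si\IND((i+\frac12,0)\in\g)/\sqrt L}\le\prod_{i=1}^{L-2}\big(1+2b\si\IND((i+\tfrac12,0)\in\g)/\sqrt L\big),
\]
using $e^a\le 1+2a$ for $a\in(0,1)$, and then expand the product to write $\bbE_{0,L}[e^{b\si N/\sqrt L}]$ as $1+\sum_{n\ge1}(2b\si/\sqrt L)^n\sum_{x_1<\dots<x_n}\bbP_{0,L}(\text{$\g$ passes through $(x_k+\frac12,0)$ for all $k$})$. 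The goal is to show this series is bounded by $1+cb$.

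The key point is the factorization of these probabilities. A self-avoiding path through the vertex $(x+\frac12,0)$ does \emph{not} automatically split into two independent pieces, because the self-avoidance constraint couples the two halves. To handle this I would first pass to a weaker event: instead of requiring the vertex $(x_k+\frac12,0)$ to lie on $\g$, I require that $\g$ contains the horizontal edge at height $0$ just to the left (or the right) of abscissa $x_k$ — i.e. a \emph{horizontal} contact, in the sense of Remark \ref{rem_zeros}. A path that uses such a horizontal edge at height $0$ genuinely decomposes at that edge into two sub-paths which interact only through the shared edge; since the two halves live in disjoint horizontal strips they are self-avoiding independently, and one gets $\cZ(x,y)\le \cZ(x,z)\cZ(z',y)\cdot e^{\b}$ type bounds along a cut. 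This reduces the probability of $n$ prescribed contacts to a product $\prod_{i=1}^{n+1}(\text{short-path partition function})/\cZ_{0,L}$ exactly as in \eqref{alocalzeros}. Then I would invoke the local-CLT-type bounds for $\cZ_{u,v}$: an upper bound $\cZ_{x_i,x_{i+1}}\le C\,\si^{-1}(x_{i+1}-x_i)^{-1/2}$ and a matching lower bound $\cZ_{0,L}\ge C^{-1}\si^{-1}L^{-1/2}$, valid for $\b\ge\b_0$ with $\si^2=e^{-\b}$. These follow from the relation \eqref{sosvsp} between $\hat\cZ_{0,L}$ and the SOS walk partition function together with Proposition \ref{prop:2}, and from the bound $\hat\cZ_{0,L}\ge \cZ_{0,L}e^{-ce^{-2\b}L}$ used already in \eqref{pcheckp1} to compare $\cZ_{0,L}$ with $\hat\cZ_{0,L}$ up to an innocuous exponential factor (which for the relevant ranges of $L$, namely $L\lesssim (j+1)^2/\si^2$, contributes only a bounded multiplicative constant, or can be absorbed since $e^{-2\b}=\si^4$ is much smaller than $\si^2/L$). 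After this reduction the combinatorial estimate is identical to \eqref{expando2}: change variables $\xi_i=x_i-x_{i-1}$, and use $\sum_{\xi_n+\xi_{n+1}=m}(\xi_n\xi_{n+1})^{-1/2}\le C$ and $\sum_{1\le\xi\le L}\xi^{-1/2}\le C'\sqrt L$ to get $\sum_{x_1<\dots<x_n}\prod(x_i-x_{i-1})^{-1/2}\le (C')^nL^{(n-1)/2}$, whence $\bbE_{0,L}[e^{b\si N/\sqrt L}]\le 1+\sum_{n\ge1}(2bC)^n\le 1+cb$ for $b$ small.

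The main obstacle is precisely the lack of a clean renewal/Markov decomposition for self-avoiding paths: one cannot simply "cut at a visited vertex''. I expect the real work to be (i) setting up the correct cut — a horizontal edge at height $0$ rather than a vertex — so that the two sides are genuinely independent self-avoiding paths, and controlling the discrepancy between "vertex contact'' $N_j$ and "horizontal-edge contact'' $\hat N_j$ (Remark \ref{rem_zeros} asserts the results are insensitive to this choice, but it must be used carefully here, possibly by noting each vertex contact at height $0$ forces an adjacent horizontal or vertical edge and bounding accordingly, at worst losing a bounded factor per contact which is harmless after adjusting $b$), and (ii) verifying the two local-CLT bounds for $\cZ_{u,v}$ with constants uniform in $\b\ge\b_0$, for which the exponential correction in \eqref{pcheckp1} must be shown negligible on the scales of $L$ that matter. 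Everything downstream is a verbatim repetition of the computation in Lemma \ref{small}.
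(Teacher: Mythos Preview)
Your proposal has a genuine gap at the factorization step. The claim that a self-avoiding path using a horizontal edge at height $0$ at abscissa $x$ decomposes into two pieces living in \emph{disjoint horizontal strips} is false: the path can backtrack. Concretely, a path can first reach and traverse the horizontal edge at $x_2$, then wander up, left, and down to traverse the edge at $x_1<x_2$, and continue. Thus even with horizontal-edge contacts the path may visit the prescribed contacts $x_1<\cdots<x_n$ in an arbitrary order $\pi$, and cutting along the path yields pieces of horizontal extents $|x_{\pi(i)}-x_{\pi(i-1)}|$, not $x_i-x_{i-1}$. Your product bound $\prod_i\cZ_{x_{i-1},x_i}$ is therefore unjustified, and switching from vertex contacts to horizontal-edge contacts does not cure this.

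The paper's proof confronts exactly this issue. It decomposes over the permutation $\pi$ in which the contacts are visited and bounds
\[
\bbE_{0,L}(\chi_{x_1}\cdots\chi_{x_n};E_\pi)\le \frac{\prod_{i=1}^{n+1}\cZ_{x_{\pi(i-1)},x_{\pi(i)}}}{\cZ_{0,L}}.
\]
Then, using the estimate $\cZ_{0,\ell}\asymp \Xi_\ell\,\si^{-1}\ell^{-1/2}$ from Proposition~\ref{prop:3} (note the grand-canonical factor $\Xi_\ell\sim e^{-c(\b)\ell}$ that your statement of the local CLT omits), the exponential parts contribute $\prod_i\Xi_{|x_{\pi(i)}-x_{\pi(i-1)}|}/\Xi_L\le C^n e^{-c(\b)\ell(\pi)}$, where $\ell(\pi)=\sum_i|x_{\pi(i)}-x_{\pi(i-1)}|-L\ge 0$ is the excess length. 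The polynomial part $\prod_i|x_{\pi(i)}-x_{\pi(i-1)}|^{-1/2}$ is maximized at the identity, so is bounded by $\prod_i(x_i-x_{i-1})^{-1/2}$. The crucial additional ingredient, absent from your sketch, is the inductive bound
\[
\sum_\pi e^{-c(\b)\ell(\pi)}\le (1+\d(\b))^n,
\]
with $\d(\b)\to 0$, which tames the $n!$ permutations. Only after this does the computation reduce to the combinatorial estimate \eqref{expando2} you quote from Lemma~\ref{small}.
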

\begin{proof}
We proceed as in Lemma \ref{small}. Define $\chi_i (\g)= \IND((i+\tfrac12,0)\in\g)$. 
We have
\begin{align}\label{expandop}
e^{b \si N(\g)/\sqrt {L}} & = \prod_{i=1}^{L-1}e^{b\si \chi_i(\g)/\sqrt {L}}\\& \leq 1+\sum_{n=1}^{L-1}(2b\si L^{-1/2})^n
\!\!\!\sum_{x_1<\cdots<x_n}\chi_{x_1}\cdots\chi_{x_n},
\end{align}
where the sum ranges over all values of the integers $1\leq x_1<\cdots<x_n\leq L-2$. 
Let us prove that 
\begin{align}\label{localzerosp2p}
\bbE_{0,L}(\chi_{x_1}\cdots\chi_{x_n})
&\leq C^n\,\si^{-n}\sqrt L\prod_{i=1}^{n+1}(x_{i}-x_{i-1})^{-1/2},
\end{align}
where $C$ is a universal constant. 
Once \eqref{localzerosp2p} is available, the rest of the proof is exactly as in Lemma \ref{small}.

Given a permutation $\pi=(\pi(1),\dots,\pi(n))$, call $E_\pi$ the event that in going from $(\tfrac12,0)$ to $(L-\tfrac12,0)$, the path $\g$ visits the zeros at $\{x_1+\tfrac12,\dots,x_n+\tfrac12\}$ in the order $(x_{\pi(1)}+\tfrac12,\dots,x_{\pi(n)}+\tfrac12)$. One has
\begin{align}\label{localzerosp2}
\bbE_{0,L}(\chi_{x_1}\cdots\chi_{x_n};E_\pi)&\leq \frac{\cZ_{0,x_{\pi(1)}+1}\cZ_{x_{\pi(1)},x_{\pi(2)}+1}\cdots\cZ_{x_{\pi(n)},L}}{\cZ_{0,L}}.
\end{align}
Using Proposition \ref{prop:3} in the appendix it follows that 
\begin{align}\label{localzerosp22p}
\frac{\cZ_{0,x_{\pi(1)}+1}\cZ_{x_{\pi(1)},x_{\pi(2)}+1}\cdots\cZ_{x_{\pi(n)},L}}{\cZ_{0,L}}
&\leq \frac{C^n\,\si^{-n}\sqrt L}{\Xi_L}\,\prod_{i=1}^{n+1}
\frac{\Xi_{|x_{\pi(i)}-x_{\pi(i-1)}|}}{|x_{\pi(i)}-x_{\pi(i-1)}|^{1/2}},
\end{align}
where $\Xi_k$ denotes the grand-canonical partition function (the one
where the horizontal coordinate of the endpoint of the path is $L$
while the vertical coordinate is free), and we set $x_{\pi(0)}=0$ and $x_{\pi(n+1)}=L$. 
Since for any permutation $\pi$ one has
$$
\prod_{i=1}^{n+1}
(x_{\pi(i)}-x_{\pi(i-1)})^{-1/2}\leq \prod_{i=1}^{n+1}
(x_{i}-x_{i-1})^{-1/2},
$$
to establish \eqref{localzerosp2p} it is sufficient to prove that for some constant $C>0$ :
\begin{align}\label{askapa1}
\sum_{\pi}\frac{1}{\Xi_L}\,\prod_{i=1}^{n+1}
\Xi_{x_{\pi(i)}-x_{\pi(i-1)}}\leq C^n
\end{align}
As in \cite[Eq.\ (4.8.6)]{DKS}, one has that 
\begin{align}\label{askapa02}
\Xi_{L}\leq C_1 e^{-c(\b)L}, \;\;\; \Xi_L\geq C_2 e^{-c(\b)L},
\end{align}
for some absolute constants $C_1,C_2>0$, where $c(\b)>0$ is a constant such that $c(\b)\to\infty$ as $\b\to\infty$. 
It follows that for any $\pi$:
\begin{align}\label{askapa2}
\frac{1}{\Xi_L}\,\prod_{i=1}^{n+1}
\Xi_{x_{\pi(i)}-x_{\pi(i-1)}}\leq C^{\,n} e^{-c(\b) \ell(\pi)}, 
\end{align}
where we 
define the excess length associated to a permutation $\pi$ by
$$
\ell(\pi):=-L+\sum_{i=0}^n|x_{\pi(i+1)}-x_{\pi(i)}|.
$$
Notice that $\ell(\pi)\geq 0$ and $\ell(\pi)=0$ iff $\pi$ is the identity $\pi(i)\equiv i$. 
To conclude, we show that 
\begin{align}\label{asapa1}
\sum_{\pi}e^{-c(\b) \ell(\pi)}\leq (1+\d(\b))^n,
\end{align}
where $\d(\b)\to 0$ when $\b\to\infty$.   
The statement \eqref{asapa1} can be obtained by induction over $n$, as follows. 
Fix $x_0=0$ and $x_{n+1}=L$ and let $x_1,\dots,x_n$ be arbitrary integers satisfying $x_i\neq 0$ and $x_{1}<\cdots< x_{n}<L$. Here we do not assume that $x_1> 0$. 
Let $\phi(n)$ be defined as the sum in \eqref{asapa1} for this choice of points $\{x_i\}$. We claim that 
\begin{align}\label{asapa2}
\phi(n)\leq (1+\d(\b))^ne^{-c(\b)x_1^-},
\end{align}
where $x_1^-:= \max(-x_1,0)$ is the negative part of $x_1$. Clearly, \eqref{asapa2} is sufficient to prove \eqref{asapa1}
which corresponds to the case $x_1>0$. To prove \eqref{asapa2} notice that for $n=1$ one has $\pi(1)=1$ and $\ell(\pi)= 2x_1^-$ so that  \eqref{asapa2} is satisfied in this case. 
For $k\in\bbN$ and $\d>0$, assume $\phi(k)\leq (1+\d)^ke^{-c(\b)x_1^-}$ and consider the case $n=k+1$. 
Decomposing along the value of $\pi(1)$ one has
$$
\phi(k+1) = \sum_{j=1}^n\sum_{\pi:\pi(1)=j}e^{-c(\b) \ell(\pi)}.
$$
By the inductive assumption one has: if $j$ is such that $x_j>0$, then
$$
\sum_{\pi:\pi(1)=j}e^{-c(\b) \ell(\pi)}\leq e^{-c(\b)(x_j-x_1)}(1+\d)^k,
$$
while if $j$ is such that $x_j<0$ (and thus $x_1<0$), then
$$
\sum_{\pi:\pi(1)=j}e^{-c(\b) \ell(\pi)}\leq e^{-c(\b)(x_1^- + |x_j|)}(1+\d)^k.
$$
To conclude observe that if $x_1>0$ (and thus $x_j>0$ for all $j\geq 1$), then one has
$$
\phi(k+1)\leq \sum_{j=1}^ne^{-c(\b)(x_j-x_1)}(1+\d)^k\leq (1+\d)^{k+1}
$$
if $\d\geq \sum_{h=1}^\infty e^{-c(\b)h}$. If instead $x_1<0$ then 
$$
\phi(k+1)\leq (1+\d)^k\sum_{j=1}^n[e^{-c(\b)(x_j-x_1)}\IND(x_j>0) + e^{-c(\b)(x_1^- + |x_j|)}\IND(x_j<0)],
$$
which is bounded by $2\d(1+\d)^ke^{-c(\b)x_1^- }$. This implies the claim \eqref{asapa2}.
\end{proof}

We turn to the proof of Theorem \ref{th2p}. We need a version of Proposition \ref{propoz} for lattice paths. 
For integers $j\geq 0$,
define
\begin{align}\label{o+jp}
\O_{0,L}^{+,j}=\{\g\in\O_{0,L}:\; \g\geq -j
\}\,,\qquad \cZ^{+,j}_{0,L} = \sum_{\g\in\O_{0,L}} e^{-\b|\g|}\IND(\g\in\O^{+,j}_{0,L}),
\end{align}
where the condition $\g\geq - j $ means that all vertices of $\g$ have vertical coordinate at least $-j$. 
For the purpose of the recursive argument we have to consider also the number of external contacts of the path with the zero line. Namely, for any $\g\in \O_{0,L}$, define
\begin{align}\label{next}
N_{\rm ext}(\g)=\sum_{i\in\bbZ} \IND((i+\tfrac12)\in \g) \IND(i\notin [0,L-1]).
\end{align}
If $N(\g)$ denotes 
the number of internal contacts with level zero as in \eqref{clpin1p}, for $\e>0$ we write
\begin{align}\label{zo+jp}
\cZ^{\e,+,j}_{0,L} = \sum_{\g\in\O_{0,L}} e^{-\b|\g| + \e N(\g)+\e N_{\rm ext}(\g)}\IND(\g\in\O^{+,j}_{0,L}).
\end{align}
\begin{definition}\label{defhn2}
Fix the constant $C>0$ as in Proposition \ref{prop:3}, an integer $j\geq 0$ and $\b>0$. 
Let $\si^2=e^{-\b}$, and define $L_0 = \frac12C(j+1)^2/\si^2$, $L_n: = 2^n L_0$. For $n\in\bbN$, $\d>0$, and $b>0$, let $\cH_n(\d,b)$ denote the following statement: 
\[
\cZ^{\e,+,j}_{0,L}\leq (1+\d)  \cZ_{0,L} \quad \forall L\in [1,L_n],
\]
where $\e=b\si^2/(j+1)$.
\end{definition}

\begin{lemma}\label{lempropoz}
For any $\d>0$, there exist constants $b_0>0,\b_0>0$ such that $\cH_1(b,\d)$ holds for all $b \leq b_0$ and  $\b\geq \b_0$. 
\end{lemma}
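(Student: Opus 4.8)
The plan is to carry over, to the lattice path setting, the base step \eqref{smao} of the random walk argument: to deduce the inequality $\cZ^{\e,+,j}_{0,L}\leq(1+\d)\cZ_{0,L}$ for all $L\in[1,L_1]$ directly from the scaled estimate of Lemma \ref{smallscalep}, with $L_1=2L_0=C(j+1)^2/\si^2$. Relative to the random walk case two features are new: the partition function $\cZ^{\e,+,j}_{0,L}$ of \eqref{zo+jp} also rewards the external contacts $N_{\rm ext}(\g)$ of $\g$ with the zero line, and one only needs the estimate in the diffusive window $L\leq L_1$, which is precisely what allows one to trade the pinning strength $\e=b\si^2/(j+1)$ for a strength of order $\si/\sqrt L$.

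The first step is to drop the wall constraint $\g\geq-j$ in \eqref{zo+jp}, which only enlarges the positive-term sum, so that
\[
\cZ^{\e,+,j}_{0,L}\leq \sum_{\g\in\O_{0,L}}e^{-\b|\g|+\e N(\g)+\e N_{\rm ext}(\g)}=\cZ_{0,L}\,\bbE_{0,L}\big[e^{\e(N+N_{\rm ext})}\big].
\]
Since $L_0=\tfrac12 C(j+1)^2/\si^2$, for $L\leq L_1$ one has $\si/\sqrt L\geq\si/\sqrt{L_1}=\si^2/(\sqrt C(j+1))$, so $\e=b\si^2/(j+1)\leq b\sqrt C\,\si/\sqrt L$. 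Writing $b'=b\sqrt C$ and letting $\bar N(\g)=N(\g)+N_{\rm ext}(\g)$ be the number of contacts of $\g$ with the zero line outside its two endpoints, it is therefore enough to prove the following analogue of Lemma \ref{smallscalep}: there is an absolute $c>0$ so that $\bbE_{0,L}[e^{b'\si\bar N(\g)/\sqrt L}]\leq 1+c\,b'$ for all $L\geq1$, all $\b\geq\b_0$, and all $b'<1/c$. Granted this, $\cZ^{\e,+,j}_{0,L}/\cZ_{0,L}\leq 1+cb\sqrt C\leq 1+\d$ as soon as $b\leq b_0(\d)$ is small enough, which is $\cH_1(\d,b)$.

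The proof of this analogue is the proof of Lemma \ref{smallscalep} with a single change. With $\chi_i(\g)=\IND((i+\tfrac12,0)\in\g)$ and the bound $e^{b'\si\bar N/\sqrt L}\leq\prod_{i}(1+2b'\si\chi_i/\sqrt L)$, the expansion produces, at order $n$, the sum $\sum_{x_1<\cdots<x_n}\bbE_{0,L}(\chi_{x_1}\cdots\chi_{x_n})$, but now with the contact positions $x_i$ ranging over \emph{all} integers rather than over $[1,L-2]$; this is exactly what accounts for $N_{\rm ext}$. Decomposing over the order $\pi$ in which $\g$ visits $\{x_i+\tfrac12\}_i$ and invoking Proposition \ref{prop:3} together with the grand-canonical bounds \eqref{askapa02} gives, with $x_0=0$ and $x_{n+1}=L$,
\[
\bbE_{0,L}(\chi_{x_1}\cdots\chi_{x_n})\leq C^n\si^{-n}\sqrt L\,\Big(\sum_\pi e^{-c(\b)\ell(\pi)}\Big)\prod_{i=1}^{n+1}(x_i-x_{i-1})^{-1/2},
\]
and the excess-length estimates \eqref{asapa2}--\eqref{asapa1}, which were already written so as to allow negative $x_i$ (that is the role of the $x_1^-$ term there), yield $\sum_\pi e^{-c(\b)\ell(\pi)}\leq(1+\d(\b))^n$ with $\d(\b)\to0$ as $\b\to\infty$. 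Fixing $\b_0$ so that $\d(\b)\leq1$ for $\b\geq\b_0$, the series is dominated termwise by the one already controlled by the combinatorial bound \eqref{expando2}, and the estimate follows for $b'$ small.

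The step I expect to require the most care is this last adaptation, namely checking that widening the range of the $x_i$ from $[1,L-2]$ to $\bbZ$ does not destroy summability: one must verify that horizontal excursions of $\g$ to the left of $0$ or to the right of $L$ --- which are what create the external contacts and also force the visiting order $\pi$ to differ from the identity --- are geometrically suppressed, uniformly in $n$. As pointed out in Remark \ref{rem_zeros}, this is already within the reach of the excess-length induction behind Lemma \ref{smallscalep}, so no genuinely new idea is needed; what remains is only to track the constants so that the factors $(1+\d(\b))$ and $(1+cb')$ multiply out to $1+\d$.
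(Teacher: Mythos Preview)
Your reduction to controlling $\bbE_{0,L}[e^{\e(N+N_{\rm ext})}]$ after dropping the wall is the same first step as the paper's. From that point the two arguments diverge, and the divergence matters.

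The paper does \emph{not} try to extend the expansion of Lemma~\ref{smallscalep} to contacts outside $[0,L]$. Instead it separates the two contributions by Cauchy--Schwarz,
\[
\bbE_{0,L}\big[e^{\e(N+N_{\rm ext})}\big]\le \big(\bbE_{0,L}[e^{2\e N_{\rm ext}}]\big)^{1/2}\big(\bbE_{0,L}[e^{2\e N}]\big)^{1/2},
\]
bounds the first factor by $1+u_\b$ using Corollary~\ref{cor:1} (which in turn rests on Proposition~\ref{prop:4}, the geometric suppression of the leftmost zero), and bounds the second by $1+2c\sqrt{C}b$ using Lemma~\ref{smallscalep} \emph{as stated}, with $x_i\in[1,L-2]$. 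This is entirely modular: no lemma has to be reopened.

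Your route --- running the expansion of Lemma~\ref{smallscalep} with contact positions $x_i$ ranging over all of $\bbZ$ --- has a concrete gap. The displayed inequality you write down,
\[
\bbE_{0,L}(\chi_{x_1}\cdots\chi_{x_n})\leq C^n\si^{-n}\sqrt L\,\Big(\sum_\pi e^{-c(\b)\ell(\pi)}\Big)\prod_{i=1}^{n+1}(x_i-x_{i-1})^{-1/2},
\]
relies on the rearrangement step $\prod_i|x_{\pi(i)}-x_{\pi(i-1)}|^{-1/2}\le\prod_i(x_i-x_{i-1})^{-1/2}$, which is used in the paper only when $0=x_0<x_1<\cdots<x_n<x_{n+1}=L$. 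Once some $x_i$ lie outside $[0,L]$ this fails (and the right-hand side is not even defined when $x_1<0$). For instance with $n=2$, $x_1=-2$, $x_2=-1$, $L=1$, the permutation $\pi=(2,1)$ gives a smaller product of distances than the identity. Consequently the $\pi$-sum and the distance product do \emph{not} decouple as you assume, and the ``termwise domination by \eqref{expando2}'' is not available: that estimate sums only over $x_i\in(0,L)$, and $\sum_{x_1\in\bbZ}|x_1|^{-1/2}|L-x_1|^{-1/2}$ diverges. You correctly note that the induction behind \eqref{asapa2} yields an extra damping factor $e^{-c(\b)x_1^-}$, but you then discard it in the very next line, keeping only $(1+\d(\b))^n$; that factor is exactly what would be needed to restore summability, and coupling it back to the $\pi$-dependent distance product is real work you have not done. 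In effect you would be reproving Corollary~\ref{cor:1} inside the expansion. The paper's Cauchy--Schwarz trick sidesteps all of this cleanly.
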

\begin{proof}
Dropping the wall constraint, we have \begin{align*}
\cZ^{\e,+,j}_{0,L}& \leq \sum_{\g\in\O_{0,L}} e^{-\b|\g| + \e N(\g)+\e N_{\rm ext}(\g)} \\ &= \cZ_{0,L}\,
\bbE_{0,L}\big[e^{\e N(\g) + \e N_{\rm ext}(\g)}\big].
\end{align*}
From Corollary \ref{cor:1} we know that $\bbE_{0,L}\big[e^{2\e N_{\rm ext}(\g)}\big]\leq 1+u_\b$
for some constant $u_\b\to 0$ as $\b\to\infty$. Thus, using Schwarz' inequality we have 
$$
\cZ^{\e,+,j}_{0,L}\leq (1+u_\b)^{1/2}\cZ_{0,L}\,
\Big(\bbE_{0,L}\big[e^{2\e N(\g)}\big]\Big)^{1/2}.
$$
Now, if $L\leq L_1$ then $2\e   \leq 2 b \sqrt {C}\si /\sqrt L$.  Therefore, Lemma \ref{smallscalep} implies 
$$\bbE_{0,L}\big[e^{2\e N(\g)}\big]\leq 1+2 \sqrt {C}c b.$$ It remains to take  $b,\b$ such that $ (1+u_\b)^{1/2}(1+ 2c\sqrt {C} b)^{1/2}\leq 1+\d$.
\end{proof}
The next proposition establishes that $\cH_n(\d,b)$ holds in fact for all $n\in\bbN$ if we take e.g.\ $\d=1$, $b$ small enough, and $\b$ large enough. 
\begin{proposition}\label{lpropoz}
There exist constants $b>0,\b_0>0$ such that if $\e\leq b \,e^{-\b}/(j+1)$, then uniformly in $L,j$ and $\b\geq\b_0$ one has 
\begin{align}\label{lpropoz1}
\cZ^{\e,+,j}_{0,L} \leq 2 \cZ_{0,L}. 
\end{align}
\end{proposition}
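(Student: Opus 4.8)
The plan is to prove the statement $\cH_n(\d,b)$ of Definition~\ref{defhn2} for \emph{every} $n\in\bbN$, with $\d$ fixed small, $b$ small and $\b\ge\b_0$, and then let $n\to\infty$; since $1+\d\le2$ this is exactly \eqref{lpropoz1}. The base case $\cH_1(\d,b)$ is Lemma~\ref{lempropoz}, so the whole argument reduces to the inductive step $\cH_n(\d,b)\Rightarrow\cH_{n+1}(\d,b)$, which is the lattice-path analogue of Lemma~\ref{indu}.

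For the step, fix $L\in(L_n,L_{n+1}]$ (for $L\le L_n$ the statement is part of $\cH_n$) and assume $\cH_n(\d,b)$. Given $\g\in\O^{+,j}_{0,L}$, list its contacts with level $0$ in the order in which $\g$ visits them; the first is $(\tfrac12,0)$ and the last $(L-\tfrac12,0)$, on opposite sides of the vertical midline $\{x=L/2\}$. Let $\eta$ be the first contact in the list with horizontal coordinate $>L/2$ and $\xi$ its predecessor; then $\xi$ has horizontal coordinate $<L/2$, and, $\g$ being self-avoiding, cutting at the two vertices $\xi,\eta$ writes $\g=\g_1\cdot\g_{\rm mid}\cdot\g_2$ with $\g_1$ from $(\tfrac12,0)$ to $\xi$, $\g_2$ from $\eta$ to $(L-\tfrac12,0)$, and $\g_{\rm mid}$ from $\xi$ to $\eta$ having \emph{no} contact with level $0$ in its interior; being connected with endpoints on opposite sides of $\{x=L/2\}$, $\g_{\rm mid}$ crosses that line, and all three pieces stay $\ge-j$. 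The weight $\exp(-\b|\g|+\e N(\g)+\e N_{\rm ext}(\g))$ factorizes over the three pieces, with only the contacts at $\xi$ and $\eta$ counted twice (cost $e^{2\e}$), and relaxing the constraint that the three pieces avoid one another only increases the sum; crucially, the external-contact term $N_{\rm ext}$ in \eqref{zo+jp} is what allows $\g_1$ and $\g_2$ to be absorbed into partition functions of the same type, including the contacts they make with level $0$ outside their horizontal span. This yields
\begin{align}
\cZ^{\e,+,j}_{0,L}\ \le\ e^{2\e}\sum_{\xi,\eta}\cZ^{\e,+,j}_{0,m_\xi}\,\Xi_{\xi,\eta}\,\cZ^{\e,+,j}_{0,L-m_\eta},
\end{align}
where $m_\xi-\tfrac12$, $m_\eta-\tfrac12$ are the horizontal coordinates of $\xi$, $\eta$ and $\Xi_{\xi,\eta}$ is the total $e^{-\b|\cdot|}$-weight of admissible (level-$0$-contact-free, $\ge-j$, midline-crossing) middle pieces. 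Since $m_\xi\le L/2$ and $L-m_\eta\le L/2$, both are $\le L_n$, so $\cH_n(\d,b)$ applies to the outer factors and gives $\cZ^{\e,+,j}_{0,L}\le(1+\d)^2e^{2\e}\sum_{\xi,\eta}\cZ_{0,m_\xi}\,\Xi_{\xi,\eta}\,\cZ_{0,L-m_\eta}$.

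It remains to show $\sum_{\xi,\eta}\cZ_{0,m_\xi}\Xi_{\xi,\eta}\cZ_{0,L-m_\eta}\le\tfrac34\,\cZ_{0,L}$, the counterpart of $\bbP_{0,L}(E)\le3/4$ in Lemma~\ref{indu}. Up to lower-order corrections — paths with several midline-straddling excursions, which get counted with multiplicity, and non-self-avoiding concatenations of the three arcs — the left side is at most $\cZ_{0,L}$ times the $\bbP_{0,L}$-probability that the path has height $\ge-j$ at the midline, which the local limit estimates of Proposition~\ref{prop:3} bound by $3/4$ uniformly in $j\ge0$, $\b\ge\b_0$ and $L\ge L_1=C(j+1)^2/\si^2$ (here one uses $L\ge L_n\ge L_1$, so that the midline height is genuinely diffusive on scale $\gg j$). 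The corrections are negligible for $\b$ large, by a counting bound on the number of lattice walks of given length. Hence $\cZ^{\e,+,j}_{0,L}\le\tfrac34(1+\d)^2e^{2\e}\,\cZ_{0,L}$; and since $\e=b\si^2/(j+1)\le b$, choosing $\d,b$ small and $\b_0$ at least as large as in Lemma~\ref{lempropoz} ensures $\tfrac34(1+\d)^2e^{2\e}\le1+\d$. This proves $\cH_{n+1}(\d,b)$ and closes the induction, and letting $n\to\infty$ gives \eqref{lpropoz1}.

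I expect the decomposition to be the main obstacle. A self-avoiding path, unlike a random-walk trajectory, is not parametrized by its horizontal coordinate and may backtrack, so the cut points $\xi,\eta$ must be chosen so that \emph{simultaneously} $\g_{\rm mid}$ is contact-free, $\g_{\rm mid}$ crosses the midline, and the outer pieces $\g_1,\g_2$ have horizontal span $\le L_n$ (so that $\cH_n$ can be applied); the term $N_{\rm ext}$ in the definition of $\cZ^{\e,+,j}$ is exactly what makes the resulting factorization self-consistent. A secondary, more technical point is to verify that the counting bound renders the multiplicity and self-intersection corrections negligible, and that Proposition~\ref{prop:3} really delivers the uniform constant $3/4$ in this lattice-path setting — which is where the large-$\b$ hypothesis and the scale $L_1\sim(j+1)^2e^\b$ enter.
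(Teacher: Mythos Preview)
Your overall strategy coincides with the paper's: the induction on $\cH_n(\d,b)$, the decomposition at two level-$0$ contacts straddling the midline, the role of $N_{\rm ext}$ in making the outer factors fit the inductive hypothesis, and the appeal to Proposition~\ref{prop:3}(b) for the uniform $3/4$ bound. Your choice of cut points (consecutive contacts in path order) differs only cosmetically from the paper's (rightmost zero in $[0,L/2]$ and leftmost in $(L/2,L]$).

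The genuine gap is the reassembly. You need
\[
\sum_{\xi,\eta}\cZ_{0,m_\xi}\,\Xi_{\xi,\eta}\,\cZ_{0,L-m_\eta}\ \le\ \tfrac34\,\cZ_{0,L},
\]
but the left side sums over triples of self-avoiding arcs whose concatenation is in general \emph{not} self-avoiding, hence not an element of $\O_{0,L}$. These illegitimate triples inflate the left side, so they work \emph{against} the desired inequality; invoking a ``counting bound on the number of lattice walks of given length'' does not dispose of them, because what must be controlled is the total $e^{-\b|\cdot|}$-weight of such triples, and there is no a priori reason this is $o(\cZ_{0,L})$. The paper handles the problem not by discarding the bad triples but by shrinking each factor, at multiplicative cost $1+u_\b$ with $u_\b\to0$ as $\b\to\infty$, to a subclass for which the concatenation is \emph{guaranteed} self-avoiding: the middle arc is required to be regular at its endpoints (Proposition~\ref{prop:1}), each outer arc to begin and end with a horizontal edge and to be regular one step in from the join (Corollary~\ref{cororeg}), and any residual external-zero reward on the middle piece is removed via Corollary~\ref{cor:1}. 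After these substitutions the restricted sum injects into $\cZ_{0,L}$ and one reads off the event $\{\g_{\rm max}(L/2)\ge -j\}$ directly, landing on \eqref{lindu3}. This regularity machinery is the piece your argument is missing; with it in place, the rest of your proof is correct.
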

\begin{proof}
We repeat the very same argument of Lemma \ref{indu}. The variable $\xi(\g)$ is now defined as the horizontal coordinate of the last contact of $\g$ with the zero line in the interval $[0,L/2]$, while $\eta(\g)$ denotes the horizontal coordinate of the first contact with the zero line in the interval $(L/2,L]$. 
Equation \eqref{indu2} now becomes: for any $L\in (L_n,L_{n+1}]$, 
\begin{align}\label{lindu2}
\cZ^{\e,+,j}_{0,L} \leq  (1+\d)^2 e^{2\e}\!\!\!\sum_{x\leq L/2}\sum_{y>L/2}
 \cZ_{0,x}\Xi^\e_{x,y}\cZ_{y,L},
\end{align}
where 
\begin{align}\label{lindu10}
\Xi^\e_{x,y}:=\sum_{\g\in\O((x,0),(y,0))}e^{-\b|\g| + \e N_{\rm ext}(\g)}\IND(N(\g)=0,\g\geq -j).
\end{align}
Note that the above sum includes the contribution of possible external zeros. This comes from the event that  the path $\g$ intersects the zero line outside of the interval $[x,y]$.  Moreover, in \eqref{lindu2} we have used the inductive assumption $\cH_n(\d,b)$. Notice that due to the possible presence of overhangs in the path $\g$ it is crucial to take into account the external contacts $N_{\rm ext}$ in order to exploit the induction. 

From Corollary \ref{cor:1} in the appendix we know that $\Xi^\e_{x,y}\leq (1+u_\b)\Xi^0_{x,y}$ for some constant $u_\b\to 0$ as $\b\to\infty$. 
Moreover, Proposition  \ref{prop:1} in the appendix shows that $\Xi^0_{x,y}\leq (1+u_\b)\wt\Xi^0_{x,y}$, where $\wt\Xi^0_{x,y}$ denotes the sum $\Xi^0_{x,y}$ in \eqref{lindu10} restricted to paths that are regular at the endpoints, see Definition \ref{regular}.  From Corollary \ref{cororeg} one has $\cZ_{0,x}\leq (1+u_\b) \widehat\cZ_{0,x}$ and $\cZ_{y,L}\leq (1+u_\b) \widehat\cZ_{y,L}$ where $\widehat \cZ_{u,v}$  denotes the partition function $\cZ_{u,v}$  restricted to paths that start and end with horizontal edges and that are regular at the points $x=u+1$ and $x=v-1$. 
Notice that if $\g_1$ is a path appearing in $ \widehat\cZ_{0,x}$, $\g_2$ is a path from $\wt\Xi^0_{x,y}$, and $\g_3$ is a path from $\widehat\cZ_{y,L}$, then the composition $\g_1\circ\g_2\circ\g_3$ defines a valid path from $0$ to $L$. That is, the above restrictions  allow us to avoid complications due to the self-avoiding constraint when we reconstruct the global partition function $\cZ_{0,L}$. 

From \eqref{lindu2} we obtain, for some constant $v_\b\to 0$ as $\b\to\infty$, 
\begin{align}\label{lindu200}
\cZ^{\e,+,j}_{0,L} \leq  (1+\d)^2 (1+v_\b)e^{2\e}\!\!\!\sum_{x\leq L/2}\sum_{y>L/2}
 \widehat\cZ_{0,x}\wt\Xi^0_{x,y}\widehat\cZ_{y,L}.
\end{align}
Define $\g_{\rm max}(\ell)$ as the maximal vertical height of $\g$ at points with horizontal coordinate $\lfloor\ell\rfloor$. Then 
it is not hard to see that \eqref{lindu200} implies 
\begin{align}\label{lindu3}
\frac{\cZ^{\e,+,j}_{0,L}}{\cZ_{0,L}} \leq  (1+\d)^2 (1+v_\b)e^{2\e}\, \bbP_{0,L}(\g_{\rm max}(L/2)\geq -j).
\end{align}
As in the proof of Proposition \ref{propoz}, it remains to show that $\bbP_{0,L}(\g_{\rm max}(L/2)\geq -j)\leq 1-\d_0$ for some $\d_0>0$,
uniformly in $L\geq L_1=C(j+1)^2e^\b$, $j\geq 0$ and $\b>\b_0$.  This follows from Proposition \ref{prop:3} (b) in the appendix.
\end{proof}

\begin{proof}[Proof of Theorem \ref{th2p}]
We proceed exactly as in \eqref{bot0}. Thus,
\begin{align}\label{bot0p}
\cZ^{\Phi,+,0}_{0,L}&
\leq
\sum_{j=0}^\infty\r_j
\sum_{\g\in\O^{+,0}_{0,L}}
e^{-\b|\g|}e^{\k_j N_j(\g)} ,
\end{align}
where $\k_j:=\e_j/\r_j = b\si^2/(j+1)$, and $\sum_{j=0}^\infty \r_j=1$.
The estimate \eqref{bot1} now takes the form
\begin{align}\label{tkf}
\sum_{\g\in\O^{+,0}_{0,L}}
e^{-\b|\g|}e^{\k_j N_j(\g)} \leq 4 \cZ_{0,L}.
\end{align}
To prove \eqref{tkf}, thanks to Proposition \ref{lpropoz} the same reasoning as in  \eqref{ubot} can be applied with minor modifications. The only difference is that to conclude one needs to handle the partition functions in  \eqref{ubot} with some care in order to restore the final partition function $\cZ_{0,L}$. More precisely, fix $u,v\in\bbZ$, with $0\leq u\leq v\leq L$; let $\hat \cZ^{+,0}_{0,u}$ denote the partition function corresponding to paths $j\geq \g\geq 0$ from $(\tfrac12,0)$ to $(u+\tfrac12,j)$ that never touch level $j$ before the endpoint; similarly, let  $\hat \cZ^{+,0}_{v,L}$ denote the partition function corresponding to paths $j\geq \g\geq 0$ from $(v-\tfrac12,j)$ to $(L-\tfrac12,0)$ that never touch level $j$ after the staring point.
Then, reasoning as in \eqref{bot012} and \eqref{ubot}, the sum in \eqref{tkf} is bounded above by
\begin{align}\label{tkf2}
\cZ_{0,L} + e^{2\k_j}
\sum_{u\leq v} \hat \cZ^{+,0}_{0,u} \cZ^{\e,+,j}_{u,v}\hat \cZ^{+,0}_{v,L},
\end{align}
where $\cZ^{\e,+,j}_{u,v}=\cZ^{\e,+,j}_{0,v-u} $ is defined as in \eqref{zo+jp} above. Note that for this estimate to hold it is crucial that $\cZ^{\e,+,j}_{u,v}$ takes into account the interaction with external zeros. By Proposition \ref{lpropoz} one has $\cZ^{\e,+,j}_{u,v}\leq 2\cZ_{u,v}$. Moreover, 
let $\wt  \cZ^{+,0}_{0,u},\wt  \cZ^{+,0}_{v,L}$ denote 
the partition functions $\hat 
\cZ^{+,0}_{0,u}$ and $\hat 
\cZ^{+,0}_{v,L}$ respectively defined with the restriction that the path is regular at the endpoints, see Definition \ref{regular}. Then, from Proposition \ref{prop:1} in the appendix one has that 
$\hat \cZ^{+,0}_{0,u}\,\hat \cZ^{+,0}_{v,L}
\leq (1+u_\b)\wt \cZ^{+,0}_{0,u}\, \wt \cZ^{+,0}_{v,L}$,
for some $u_\b\to 0$ as $\b\to\infty$. 
Furthermore, from Corollary \ref{cororeg} one has $\cZ_{u,v}\leq (1+u_\b) \widehat\cZ_{u,v}$,  
where $\widehat \cZ_{u,v}$  denotes the partition function $\cZ_{u,v}$  restricted to paths that start and end with horizontal edges and that are regular at the points $x=u+1$ and $x=v-1$. 
It follows that 
\begin{align}\label{tkf1}
\hat \cZ^{+,0}_{0,u}\,\cZ_{u,v}\,\hat \cZ^{+,0}_{v,L}
\leq (1+v_\b)\wt \cZ^{+,0}_{0,u}\,\widehat \cZ_{u,v}\, \wt \cZ^{+,0}_{v,L},
\end{align}
for some $v_\b\to 0$ as $\b\to\infty$
As discussed before Eq.\ \eqref{lindu200}, the regularity constraint at the endpoints together with the restriction to horizontal edges allows us to reconstruct the partition function $\cZ_{0,L}$ as an upper bound  
so that, taking $\b$ large enough, one can conclude in the same way as in the proof of  \eqref{bot1}. 
\end{proof}

\appendix
\section{}
\label{A}
In this first appendix we prove few technical results which, roughly speaking,
say that, even in the presence of a wall, the lattice path ensemble for $\b$ large is likely to intersect
a given vertical line only once, not to make  excursions to the
left of its starting point (or to the right of the final point) and to start and to end with a horizontal bond. These very
intuitive results are  useful when trying to concatenate together
different pieces of the path in our recursive method.   

Consider the lattice path model defined in Section \ref{latticepaths}.
We introduce a bit more notation. 
Given
$H\in \bbZ$ we will denote by $\O_{0,L}^H$ the space
of self-avoiding lattice paths $\g$ connecting $(1/2,0)$ to $(L-1/2,H)$. The
corresponding ensemble will be denoted by $\bbP_{0,L}^H$. If $H=0$ we
will simply write $(\O_{0,L},\bbP_{0,L})$ as usual. 
The vertical line
through the point $(u,0)$, $u\in \bbZ$, will be denoted by 
$\cL_u$ and the cardinality of a finite set $S$ will be denoted
by $|S|$. We write
$\g\geq - j$ if the height of $\g\in \O_{0,L}^H$ is everywhere at least
$-j$. Below, $j$ is always a non-negative integer. 
\begin{definition}[Local regularity]
\label{regular}
Given $u\in [1,L-1]\cap \bbZ$ we say that $\g\in\O_{0,L}^H$ is {\em regular} at $u$ if $\g$ intersects \emph{only once} the line $\cL_u$, i.e.\ if $|\g\cap \cL_u|= 1$. For $u\in\{0,L\}$, we say that $\g\in\O_{0,L}^H$ is regular at $u$ if $\g$ \emph{does not} intersect $\cL_u$, i.e.\ if $|\g\cap \cL_u|= 0$. If $\g\in\O_{0,L}^H$ is regular at both $0$ and $L$ we say that $\g$ is regular at the endpoints.
\end{definition}
As an example, both paths in Figure \ref{fig:paths} are regular at the endpoints, the left path is regular at $u=2$, while the right path is not. 
\begin{proposition}
\label{prop:1} 
We have \begin{equation}
  \label{eq:1}
\lim_{\b\to +\infty}\sup_{j,L}\,\maxtwo{H\geq -j}{ |H|\le L}\,\max_{u\in[0,L]}
\bbP_{0,L}^H\left(\g \text{ is not regular at $u$}\mid \g \geq -j\right)=0.   
\end{equation}
\end{proposition}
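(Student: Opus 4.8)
The plan is to show that any self-avoiding path which is not regular at $\cL_u$ must waste at least two units of length, and that for $\b$ large the Boltzmann weight of such a wasteful configuration is smaller, by a factor $O(e^{-2\b})$, than that of the corresponding regular configuration — uniformly in $j,L,H$ and in $u$.

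\emph{Step 1 (reduction to a backward crossing).} The endpoints $(\tfrac12,0)$ and $(L-\tfrac12,H)$ of a path $\g\in\O_{0,L}^H$ lie strictly to the left, resp.\ strictly to the right, of $\cL_u$ when $u\in\{1,\dots,L-1\}$, and on the same side of $\cL_u$ when $u\in\{0,L\}$; hence the number $|\g\cap\cL_u|$ of horizontal edges of $\g$ crossing $\cL_u$ has a fixed parity, and regularity at $u$ (Definition~\ref{regular}) means exactly that this number is minimal. Therefore $\g$ is not regular at $u$ if and only if $\g$ contains at least one horizontal edge that crosses $\cL_u$ \emph{from right to left}, and it suffices to bound the $\bbP_{0,L}^H(\cdot\mid\g\ge -j)$-probability of this event uniformly.

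\emph{Step 2 (extracting the first excursion).} Given such a $\g$, let its first crossing of $\cL_u$ (necessarily left-to-right) be at height $h_1$, and its first right-to-left crossing at height $h_2\ne h_1$. This yields a unique decomposition $\g=\g_1\cdot[\text{crossing edge at }h_1]\cdot\a\cdot[\text{crossing edge at }h_2]\cdot\g_2$ in which $\g_1$, occurring before the first crossing, lies in $\{x\le u-\tfrac12\}$ and joins $(\tfrac12,0)$ to $(u-\tfrac12,h_1)$; $\a$, occurring between the first two crossings, lies in $\{x\ge u+\tfrac12\}$ and joins $(u+\tfrac12,h_1)$ to $(u+\tfrac12,h_2)$ (the superfluous excursion); and $\g_2$ joins $(u-\tfrac12,h_2)$ to $(L-\tfrac12,H)$ with no constraint other than $\g_2\ge -j$. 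Dropping the mutual-avoidance constraints among the three pieces gives the upper bound
\begin{align}
\sum_{\substack{\g\in\O_{0,L}^H,\ \g\ge -j\\ \g\text{ not regular at }u}} e^{-\b|\g|}\ \le\ \sum_{h_1\ne h_2} G_1(h_1)\,e^{-2\b}\,E(h_1,h_2)\,\widetilde G_2(h_2),
\end{align}
where $G_1$, $E$ and $\widetilde G_2$ are the $e^{-\b|\cdot|}$-weighted partition functions of $\g_1$, $\a$ and $\g_2$ with those constraints removed, and $e^{-2\b}$ is the weight of the two crossing edges. A path \emph{regular} at $u$, on the other hand, crosses $\cL_u$ exactly once, and its analogous splitting into a left part in $\{x\le u-\tfrac12\}$, the crossing edge, and a right part in $\{x\ge u+\tfrac12\}$ is \emph{automatically} self-avoiding, the parts occupying disjoint vertical strips; this gives the matching lower bound
\begin{align}
\cZ:=\sum_{\g\in\O_{0,L}^H,\ \g\ge -j} e^{-\b|\g|}\ \ge\ \sum_{h} G_1(h)\,e^{-\b}\,G_2^{\mathrm R}(h),
\end{align}
$G_2^{\mathrm R}(h)$ being the partition function of right parts issued from $(u+\tfrac12,h)$. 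Since the probability to bound is the ratio of the left side of the first display to $\cZ$, it suffices to prove that the first display is at most $\eps(\b)$ times the right side of the second, with $\eps(\b)\to 0$ as $\b\to\infty$.

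\emph{Step 3 (the two estimates, and the main obstacle).} A crude count: an excursion $\a$ of length $\ell$ is a lattice walk of $\ell$ steps from a fixed vertex, so there are at most $4^\ell$ of them, and it must have $\ell\ge|h_1-h_2|$; thus for $\b$ so large that $4e^{-\b}<1$,
\begin{align}
E(h_1,h_2)\ \le\ \sum_{\ell\ge|h_1-h_2|} 4^\ell e^{-\b\ell}\ =\ \frac{(4e^{-\b})^{|h_1-h_2|}}{1-4e^{-\b}}\,,
\end{align}
which is summable in $h_2$ and geometrically concentrated on $|h_1-h_2|$ small. The second ingredient — and here is where self-avoidance makes the argument delicate — is to pass from the unconstrained tail $\widetilde G_2(h_2)$ in the upper bound back to the confined tail $G_2^{\mathrm R}(h_1)$ in the lower bound: one unfolds across $\cL_u$ the part of $\g_2$ that dips into $\{x\le u-\tfrac12\}$, and moves the endpoint from height $h_2$ to height $h_1$ at a cost of $|h_1-h_2|$ vertical edges that is exactly absorbed by the factor $(4e^{-\b})^{|h_1-h_2|}$ above, while the self-avoidance lost in these recombinations is compensated, for $\b$ large, by the gain $e^{-2\b}$. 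This is the analogue here of the excursion estimate \cite[Eq.\ (4.8.6)]{DKS}. Carrying it through bounds the left side of the first display by $C\,e^{-2\b}(1-4e^{-\b})^{-1}\cZ$, which is $\eps(\b)\cZ$ with $\eps(\b)\to 0$, uniformly in $u\in[0,L]$ (whence the $\max_u$). The endpoint cases $u\in\{0,L\}$ are treated identically, the right excursion being replaced by an excursion of $\g$ to the left of its initial point ($u=0$), resp.\ to the right of its terminal point ($u=L$).
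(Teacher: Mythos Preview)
Your Peierls-type surgery argument is attractive and would give the sharp rate $O(e^{-2\b})$ if it worked, but Step~3 contains a genuine circularity that I do not see how to close.

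The crux is the comparison between the unrestricted tail $\widetilde G_2(h_2)$ (a self-avoiding path from $(u-\tfrac12,h_2)$ to $(L-\tfrac12,H)$ with only the wall constraint $\g_2\ge -j$) and the half-plane tail $G_2^{\mathrm R}(h_1)$ (the same, but confined to $\{x\ge u+\tfrac12\}$). You propose to ``unfold across $\cL_u$'' the left excursions of $\g_2$, but reflection does \emph{not} preserve self-avoidance: the reflected portion may intersect the unreflected portion in an uncontrolled number of places, and a single factor $e^{-2\b}$ cannot absorb an unbounded number of violations. Alternative surgeries (deleting the left excursions and reconnecting by vertical segments, say) face the same obstacle: the reconnecting segments may hit the existing path, and the multiplicity of the healing map is not bounded a priori. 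More to the point, the inequality $\widetilde G_2(h)\le C\,G_2^{\mathrm R}(h)$ you are after is, after a translation, exactly the statement that a self-avoiding path in the wall ensemble is unlikely to cross back over its starting column~--- i.e.\ the proposition itself. So the argument as written assumes what it sets out to prove.

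The paper avoids this trap by a recursive halving instead of a one-shot surgery. Using a fluctuation estimate from \cite{DKS}, one shows that with probability $1-e^{-cL^{1-2\d}}$ there is a column near $L/2$ where $\g$ is regular and the height is close to its mean; conditioning on the leftmost such column splits the path measure exactly, and the regularity question at $u$ reduces to the same question at scale $L/2$. This gives a recursion $p_L\le p_{L/2}+e^{-cL^{1-2\d}}$, and the base case at any fixed $L_0$ is a trivial Peierls bound (where uniformity in $L$ is no longer needed). This is how the paper proceeds; no path surgery is required, and the comparison $\widetilde G_2$ versus $G_2^{\mathrm R}$ never arises. If you want to rescue your direct approach, you would need an independent proof that left excursions of a self-avoiding path across a fixed column have small total weight uniformly in $L$~--- which, again, is the content of the proposition.
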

  For simplicity we only treat the case $u=0$ but the same strategy
  with minor modifications works for other values of $u$. Let $
\hat \bbP_{0,L}^H (\cdot)=\bbP_{0,L}^H(\cdot\mid \g\cap
\cL_L=\emptyset)$ and let $\cA$ denote the event that $\g\in \O_{0,L}^H$ is not regular at $0$. 
We will first show that
\begin{equation}
  \label{eq:2}
  \lim_{\b\to +\infty}\sup_{j,L}\,\maxtwo{H\geq -j}{ |H|\le L}
\hat \bbP_{0,L}^H \left(\cA\mid \g \geq -j\right)=0.   
\end{equation}
Later on we will show that \eqref{eq:2} implies the same bound for $ \bbP_{0,L}^H $. 

To prove \eqref{eq:2}
fix $\d\le 1/10$ (independent of $\b$) and  let $\cG_L$ be the event that there exists 
$y\in [L/2- \d^2 L^{1-\d},L/2 +\d^2 L^{1-\d}]\cap \bbZ$ such that $\g\cap
\cL_y=(y,H_y)$  and $H_y$
is within  
  $\d^2L^{1-\d}$ from the average height $\bar H_y:=Hy/L$.
\begin{lemma}
\label{A.2}
For all $\b$ large enough and uniformly in $j\geq 0$, $|H|\le 3L/2$ and $L$,  
\[
\hat \bbP_{0,L}^H \left(\cG_L\mid \g \geq -j\right)\geq 1-\exp(-c L^{1-2\d}),
\]  
for some constant $c=c(\d,\b)$ with $\lim_{\b\to \infty}c(\d,\b)=+\infty$. The same bound applies to $\bbP_{0,L}^H \left(\cG_L\mid \g \geq -j\right)$. 
\end{lemma}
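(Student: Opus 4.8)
The plan is to realize the complementary event $\cG_L^c$ as the union of two ``bad'' events and to control each by a standard low-temperature input. Write $I:=[L/2-\d^2L^{1-\d},\,L/2+\d^2L^{1-\d}]\cap\bbZ$ for the set of columns in the window; we may assume $L$ large enough that $|I|\asymp\d^2L^{1-\d}$ and $I\subseteq[1,L-1]$ (for bounded $L$ the claim is checked directly, the contour concentrating, as $\b\to\infty$, on minimal-length paths). Put
\[
Y_1=\{y\in I:\ |\g\cap\cL_y|\ge2\},\qquad Y_2=\{y\in I:\ |\g\cap\cL_y|=1,\ |H_y-\bar H_y|>\d^2L^{1-\d}\}.
\]
If $Y_1\ne I$ and $Y_2=\emptyset$ then some column of $I$ is crossed exactly once at a height close to $\bar H_y$, i.e.\ $\cG_L$ holds; hence $\cG_L^c\subseteq\{Y_1=I\}\cup\{Y_2\ne\emptyset\}$. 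Two preliminary reductions. First, since $\{\g\ge-j\}\subseteq\O_{0,L}^H$ and, by the lattice-path analogue of \eqref{posk1} (comparison with the SOS bridge), $\bbP_{0,L}^H(\g\ge-j)$ and $\hat\bbP_{0,L}^H(\g\ge-j)$ are $\ge c_\b\,L^{-C}$ uniformly over admissible $H,j,L$, it is enough to bound the \emph{unconditional} probabilities $\bbP_{0,L}^H(\cG_L^c)$ and $\hat\bbP_{0,L}^H(\cG_L^c)$ by $\exp(-c'(\d,\b)L^{1-2\d})$, the polynomial factor being harmless for $L$ large. Second, crossing $\cL_L$ is an overhang costing excess length $\ge2$, hence exponentially unlikely, so the conditioning defining $\hat\bbP$ is benign for all the estimates below and it suffices to argue under $\bbP_{0,L}^H$.

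The event $\{Y_1=I\}$ is handled by standard surgery (cf.\ \cite{DKS}): a column carrying an extra pair of crossings of its vertical line contributes a localized excess length $\ge2$ that survives the entropy factor, so the probability that a prescribed column is multiply crossed is $\le Ce^{-2\b}$, and these events are suppressed essentially independently over disjoint columns. Therefore $\bbP_{0,L}^H(Y_1=I)\le(Ce^{-2\b})^{|I|}=\exp(-c(\b)\,\d^2L^{1-\d})$ with $c(\b)\to\infty$, and since $L^{1-\d}\gg L^{1-2\d}$ this is well below the target.

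The event $\{Y_2\ne\emptyset\}$ is the crux. Fix $y\in I$ and, on $\{|\g\cap\cL_y|=1\}$, cut $\g$ at its unique crossing edge of $\cL_y$ at height $h$: this splits $\g$ into a left sub-path in $\{x\le y-\tfrac12\}$ from $(\tfrac12,0)$ to $(y-\tfrac12,h)$ and a right sub-path in $\{x\ge y+\tfrac12\}$ from $(y+\tfrac12,h)$ to $(L-\tfrac12,H)$, and the Boltzmann weight factorizes as $e^{-\b}$ times the corresponding two half-plane partition functions. Bounding these by the free two-point functions and invoking the Ornstein--Zernike / local central limit asymptotics for the contour partition functions (Proposition~\ref{prop:3}, and \cite{DKS}), the product is, as a function of $h$, Gaussian around $\bar H_y=Hy/L$ with variance of order $y(L-y)/L\asymp L$; dividing by $\cZ_{0,L}^H$, bounded below by the straight-interpolation configuration carrying the same surface energy, gives
\[
\bbP_{0,L}^H\big(|\g\cap\cL_y|=1,\ H_y=h\big)\le\frac{C\sqrt L}{\sqrt{y\wedge(L-y)}}\,\exp\!\Big(-c(\b)\,\frac{(h-\bar H_y)^2}{L}\Big),
\]
where the hypothesis $|H|\le\tfrac32L$ keeps the tilt $H/L$ bounded so the surface tension stays in its strictly convex regime and $c(\b)>0$ uniformly. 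Summing over $|h-\bar H_y|>\d^2L^{1-\d}$ and over the at most $L$ values of $y\in I$,
\[
\bbP_{0,L}^H(Y_2\ne\emptyset)\le\mathrm{poly}(L)\,\exp\!\big(-c(\b)\,\d^4L^{1-2\d}\big),
\]
again below the target for $L$ large. Adding the two contributions and undoing the reductions yields the claim, with a rate $c(\d,\b)>0$ tending to $+\infty$ with $\b$ (through the surgery cost in the first event and the contour stiffness at small tilt in the second).

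I expect the second step — the Gaussian localization of the crossing height across the whole window — to be the main obstacle. It is the only place one genuinely needs the diffusive behaviour of the low-temperature contour, i.e.\ sharp Ornstein--Zernike/local-CLT two-point estimates that are uniform in $L$, in the wall height $j$, and in the tilt $H/L\in[-\tfrac32,\tfrac32]$; in particular the uniformity in $L$ — which is what makes the $\asymp\d^2L^{1-\d}$-wide window large compared with the typical fluctuation $\asymp\sqrt L$ — is the delicate input. The overhang estimate and the removal of the wall and of the $\cL_L$-conditioning are comparatively routine low-temperature surgery.
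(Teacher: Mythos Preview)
Your decomposition and overall strategy are essentially the paper's: split $\cG_L^c$ into a height-fluctuation event and an everywhere-overhang event, bound each unconditionally, and divide by a lower bound on the wall probability. Your $\{Y_2\ne\emptyset\}$ is the paper's $B_1$ (some crossing in the window far from the interpolant), handled in the paper by citing \cite{DKS}*{Section 4.14} rather than re-deriving the Gaussian tail via cutting and Ornstein--Zernike.

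Two points where your argument departs from the paper and leaves a gap.

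For the overhang event the paper does \emph{not} use column-wise decoupling. It first intersects with $B_1^c$, so that on the complementary event $B_2$ every crossing in the window lies within height $\d^2L^{1-\d}$ of the interpolant; the path in the strip is then trapped in a box of side $O(\d^2L^{1-\d})$, every column is crossed at least three times, and a direct length count gives excess length $\ge 3\d^2L^{1-\d}$ \emph{localized in the strip}, after which a single Peierls comparison yields $e^{-c'\b L^{1-\d}}$. Your product bound $(Ce^{-2\b})^{|I|}$ rests on the assertion that the per-column overhang events are ``suppressed essentially independently'', but this is false as stated: a single long overhang spanning the whole window puts all columns of $I$ into $Y_1$ simultaneously, so the events are strongly positively correlated and no product bound is available without further input. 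One can rescue the conclusion either by the paper's height-confinement trick, or by invoking the Ornstein--Zernike renewal/cone-point structure (no regeneration point in an interval of length $|I|$ has probability $\le Ce^{-m_\b|I|}$), but you invoke neither.

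For the wall probability, \eqref{posk1} is a statement about random-walk bridges with both endpoints at zero; it does not cover self-avoiding lattice paths with endpoint at height $H$, $|H|\le 3L/2$, and wall at $-j$, uniformly in $H,j$. The paper sidesteps this by an explicit construction: lift the path by $\ell=\lfloor L^{1-3\d}\rfloor$ at each end, so that the middle portion is at height $\gg L^{1/2}$ and the constraint $\g\ge -j$ is essentially free; this yields only $\hat\bbP_{0,L}^H(\g\ge -j)\ge e^{-c''\b L^{1-3\d}}$, weaker than your polynomial claim but sufficient since $L^{1-3\d}\ll L^{1-2\d}$.
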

\begin{proof}[Proof of the Lemma]
We prove the lemma for $\hat \bbP_{0,L}^H$ but the same arguments apply to $\bbP_{0,L}^H$. Clearly,
\[
\hat \bbP_{0,L}^H\left(\cG^c_L\mid
  \g\geq -j\right)\le \hat \bbP_{0,L}^H
\left(\cG^c_L\right)/
\hat \bbP_{0,L}^H\left(\g\geq -j\right).
\]
Using \cite{DKS}*{Section 4.14} the probability that there exists
$y\in [L/2- \d^2 L^{1-\d},L/2 +\d^2 L^{1-\d}]\cap \bbZ$ such that
$\g\cap
\cL_y$ contains a point whose height differs from $\bar H_y$ by more than
$\d^2L^{1-\d}$ is smaller than $e^{-c 
  L^{1-2\d}}$ for some constant $c=c(\d)$.
If for all $y\in [L/2- \d^2 L^{1-\d},L/2 +\d^2 L^{1-\d}]\cap \bbZ$ the set $\g\cap \cL_y$
is \emph{not} a singleton and it
is contained in the interval $[\bar H_y\pm \d^2L^{1-\d}]$, then  in the
  interval $[L/2- \d^2 L^{1-\d},L/2 +\d^2 L^{1-\d}]$ the path $\g$ has
  length at least $10\d^2L^{1-\d}$ {\it i.e.}  an excess
 length (w.r.t. to its minimal length) of at least 
$(6-2H/L)\d^2L^{1-\d}\geq 3\d^2 L^{1-\d}$. Therefore a 
Peierls argument shows that the above event has probability not larger
than $e^{-c'\b
  L^{1-\d}}$ for some constant $c'=c'(\d)$. In conclusion, by
renaming the constants if necessary,  $\hat \bbP_{0,L}^H\left( \cG^c_L\right)\le e^{-c 
  L^{1-2\d}}$
for some constant $c=c(\d,\b)$ diverging as $\b\to +\infty$. 

We conclude with a rough lower bound on $\hat
\bbP_{0,L}^H\left(\g\geq -j\right) $ of the form 
\begin{equation}
  \label{eq:13}
 \hat \bbP_{0,L}^H\left(\g\geq -j\right)=\sum_{\g\geq -j\atop \g\cap
   \cL_L=\emptyset}e^{-\b|\g|}/\sum_{\g\cap \cL_L=\emptyset}e^{-\b|\g|}\ge
 e^{-c''\b L^{1-3\d}}
\end{equation}
for some constant $c''=c''(\d)$. To prove this, let $\ell =\lfloor L^{1-3\d}\rfloor$
with $\d<1/6$. We can restrict the sum in the
numerator above to paths which, while staying above $-j$ and never
intersecting $\cL_L$, first go straight to the
point $(0,\ell)$, then reach the point $(L,H+\ell)$ and finally go
straight to the point
$(L,H)$. Since $\ell\gg
L^{1/2}$, \cite{DKS}*{?} implies that the extra constraint of staying
above level $-j$ is irrelevant for this restricted sum which is
therefore greater
than e.g. $e^{-3\b \ell}\sum_{\g\cap \cL_L=\emptyset}e^{-\b|\g|}$. Hence the claimed bound.
\end{proof}
We return  to the proof of \eqref{eq:2} let 
\[
p_L:= \suptwo{L'}{|L-L'|\le L^{1-\d}}\sup_{j}\,\maxtwo{H\geq -j}{
  |H|\le L+L^{1-\d}}\hat \bbP_{0,L'}^H\left(\cA\mid \g \geq -j\right).
\]
We claim that
\begin{equation}
  \label{eq:3}
p_L\le p_{L/2} + \exp(-c L^{1-2\d})
\end{equation}
with $c=c(\d)$.
Fix $L'\in [L-L^{1-\d},L+L^{1-\d}]$. On the event $\cG_{L'}$ we can condition
on the rightmost point $\xi\in [L'/2-\d^2
(L')^{1-\d},L'/2+\d^2 (L')^{1-\d}]\cap \bbZ$ satisfying the
requirements of $\cG_{L'}$ and on the height $H_\xi$ of the path there. By construction, for $\d$ small enough and
$L$ large enough independent of $\b$,
\begin{equation}
  \label{eq:9}
\xi\in [L/2-(L/2)^{1-\d},L/2+(L/2)^{1-\d}]\quad \text{and}\quad 
|H_\xi|\le (1+\left(L/2\right)^{-\d})L/2.   
\end{equation}
Notice that the law of the part of the path $\g$ joining the origin to
$(\xi-1/2,H_\xi)$ 
is exactly $\hat\bbP_{0,\xi}^H$. Hence, using Lemma \ref{A.2} and
\eqref{eq:9}, we get 
\begin{align*}
\hat \bbP_{0,L'}^H\left(\cA\mid \g \geq -j\right)&\le
\max_{\xi,H_\xi\sim \cG_{L'}}\hat \bbP_{0,L'}^H \left(\cA\mid \xi,H_\xi,\g \geq -j\right)+\hat
\bbP_{0,L'}^H\left(\cG_{L'}^c\mid \g \geq -j\right)\\
&\le p_{L/2}+e^{-c L^{1-2\d}},
\end{align*}
i.e. \eqref{eq:3}. To finish the proof of \eqref{eq:2} it is sufficient to observe that,
for any fixed  $L_0$, 
\begin{equation*}
  \lim_{\b\to +\infty}\sup_{j}\,\maxtwo{H\geq -j}{ |H|\le L_0}\hat \bbP_{0,L_0}^H \left(\cA\mid \g \geq -j\right)=0,   
\end{equation*}
because the event $\cA$ forces the path to have an excess length
(w.r.t. to the minimal lenght) of
at least $2$. Hence \eqref{eq:3} implies that, for any $L_0$, 
\[
  \limsup_{\b\to +\infty}\ \sup_{j,L}\,\maxtwo{H\geq -j}{ |H|\le 2 L}
\hat \bbP_{0,L}^H\left(\cA\mid \g \geq -j\right)\le
\sum_{n\geq 0}e^{-c(L_0\,2^n)^{1-2\d}}   
\]
and \eqref{eq:2} follows.

We finally observe that \eqref{eq:1} follows at once from
\eqref{eq:2}. Fix some large $L_0$ independent of $\b$. For $L\le L_0$
a simple Peierls argument shows that
\[
  \lim_{\b\to +\infty}\sup_{j}\,\maxtwo{H\geq -j}{ |H|\le 2 L}\bbP_{0,L}^H \left(\cA\mid \g \geq -j\right)=0.
\] 
For $L>L_0$ write as before
\[
\bbP_{0,L}^H\left(\cA\mid \g \geq -j\right)\le
\max_{\xi,H_\xi\sim \cG_{L}}\bbP_{0,L}^H\left(\cA\mid \xi,H_\xi,\g \geq -j\right)+
\bbP_{0,L}^H \left(\cG_{L}^c\mid \g \geq -j\right)
\]
In the first term in the r.h.s. the path to the left of $\cL_\xi$ has
exactly the distribution $\hat \bbP_{0,\xi}^H$. Hence its $\b\to \infty$ limit is zero by
\eqref{eq:2}. The second term in the r.h.s. is smaller than $e^{-c
  L^{1-2\d}}\le e^{-c
  L_0^{1-2\d}}$ by the previous lemma. Since $L_0$ was arbitrary the result follows.
This ends the proof of Proposition \ref{prop:1}.

\begin{proposition}
\label{prop:4}
Given $\g\in \O_{0,L}^H$ let $X(\g)$ be the position of the leftmost
zero of $\g$. There exist $C$  and
$\b_0$ such that, for all $\b\geq \b_0$ and all $\ell>0$,
\begin{equation}
  \label{eq:11}
 \sup_{j,L}\,\maxtwo{H\geq -j}{ |H|\le L}\,\bbP_{0,L}^H\left(X(\g)= -\ell\mid
   \g \geq -j\right)\le Ce^{-m_\b\, \ell},
\end{equation}
with $\lim_{\b\to \infty}m_\b=+\infty$.
\end{proposition}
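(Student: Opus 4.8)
The plan is to establish \eqref{eq:11} by a Peierls-type surgery argument, carried out so that it respects the wall constraint $\gamma\ge -j$, which is what makes the conditioning harmless. The mechanism is that $\{X(\gamma)=-\ell\}$ forces $\gamma$ to carve out an excursion to the left of its starting column of length at least $2\ell$, that erasing this excursion gains a weight factor $e^{2\beta\ell}$, and that the number of self-avoiding arcs of a prescribed length $n$ emanating from a fixed edge is at most $\mu^{n}$ for an absolute constant $\mu$ (e.g.\ the connective constant of $\bbZ^{2}$, or crudely $3$); hence energy beats entropy once $\beta>2\log\mu$, and one obtains $m_\beta\approx 2\beta-2\log\mu\to\infty$.

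Concretely: fix $\beta$ large, $j,L,H$ with $H\ge -j$, $|H|\le L$, and an integer $\ell\ge 1$. Let $\gamma\in\O_{0,L}^{H}$ satisfy $\gamma\ge -j$ and $X(\gamma)=-\ell$, with leftmost zero $v^{*}=(-\ell+\tfrac12,0)$, and let $a=(\tfrac12,h_a)$, $b=(\tfrac12,h_b)$ be the last, resp.\ first, vertex of $\gamma$ on the line $x=\tfrac12$ visited before, resp.\ after, $v^{*}$ along $\gamma$. The sub-arc $\gamma^{\rm ex}$ of $\gamma$ from $a$ to $b$ then has all intermediate vertices in $\{x\le -\tfrac12\}$, stays above $-j$, and has $n:=|\gamma^{\rm ex}|\ge 2\ell+|h_a-h_b|$ (at least $\ell$ horizontal edges each way, plus at least $|h_a-h_b|$ vertical ones). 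Define $\gamma'=T(\gamma)$ by replacing $\gamma^{\rm ex}$ with the straight vertical segment of length $|h_a-h_b|$ from $a$ to $b$; then $\gamma'\in\O_{0,L}^{H}$, $\gamma'\ge -j$, and $|\gamma|=|\gamma'|+n-|h_a-h_b|$, so that $e^{-\beta|\gamma|}=e^{-\beta|\gamma'|}\,e^{-\beta(n-|h_a-h_b|)}$. Grouping the preimages of a fixed $\gamma'$ by the maximal vertical run of $\gamma'$ on $x=\tfrac12$ where the surgery took place (call it a \emph{site}, of length $m$) and by the length $n\ge 2\ell+m$ of the excised arc, and bounding the number of admissible arcs by $\mu^{n}$, one gets
\[
\sum_{\gamma\ge -j,\;X(\gamma)=-\ell}e^{-\beta|\gamma|}\ \le\ \sum_{\gamma'\ge -j}e^{-\beta|\gamma'|}\ \sum_{\text{sites of }\gamma'}\ \sum_{n\ge 2\ell+m}\mu^{n}e^{-\beta(n-m)}\ =\ \frac{\mu^{2\ell}e^{-2\beta\ell}}{1-\mu e^{-\beta}}\sum_{\gamma'\ge -j}e^{-\beta|\gamma'|}\sum_{\text{sites of }\gamma'}\mu^{m}.
\]
For $\beta$ large one has $\sum_{\gamma'\ge -j}e^{-\beta|\gamma'|}\sum_{\text{sites}}\mu^{m}\le C\sum_{\gamma'\ge -j}e^{-\beta|\gamma'|}$ with $C$ bounded, because the number of visits of a typical $\gamma'$ to the line $x=\tfrac12$, weighted by $\mu^{m}$ over run lengths $m$, has bounded $\bbP_{0,L}^{H}(\cdot\mid\gamma\ge -j)$-expectation: each visit beyond the one forced at the starting vertex and each extra unit of run length carries a Peierls factor $e^{-\beta}$. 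Dividing through by $\sum_{\gamma'\ge -j}e^{-\beta|\gamma'|}$, which is precisely the partition function normalising $\bbP_{0,L}^{H}(\cdot\mid\gamma\ge -j)$, yields $\bbP_{0,L}^{H}(X(\gamma)=-\ell\mid\gamma\ge -j)\le C'(\mu^{2}e^{-2\beta})^{\ell}$, uniformly in $j,L,H$, which is \eqref{eq:11}.

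The main obstacle is the familiar DKS-type bookkeeping concealed above. First, one must verify that $\gamma'=T(\gamma)$ is genuinely self-avoiding: the reinserted vertical segment on $x=\tfrac12$ may meet the remainder of $\gamma$, in which case the reconnection has to be rerouted, and this must be done at a combinatorial cost that the factor $e^{-\beta n}$ absorbs (equivalently, such a collision is itself an extra left excursion of $\gamma'$, hence rare, and is accounted for in the site count). Second, one needs the $O(1)$ bound on $\bbE_{0,L}^{H}\big[\sum_{\text{sites}}\mu^{m}\mid\gamma\ge -j\big]$, which is a Peierls/contour estimate in the spirit of Lemma \ref{A.2} and Proposition \ref{prop:1}, using in addition the local-limit estimates of Proposition \ref{prop:3} to control $\sum_{\gamma'\ge -j}e^{-\beta|\gamma'|}$ on diffusive scales. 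For $L$ below a fixed scale the statement also follows directly from a Peierls bound, since $\{X(\gamma)=-\ell\}$ forces an excess length of at least $2\ell$.
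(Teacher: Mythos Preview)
Your surgery approach is natural but leaves two genuine gaps that you flag without closing. First, the map $T$ does not in general produce a self-avoiding path: the vertical segment from $a$ to $b$ on the column $x=\tfrac12$ may hit other vertices of $\g\setminus\g^{\rm ex}$ lying on that column (the start vertex $(\tfrac12,0)$ itself, for instance, whenever $0$ lies strictly between $h_a$ and $h_b$). Saying this is ``accounted for in the site count'' is not an argument; a collision means $\g'\notin\O_{0,L}^H$, whereas the inequality you write sums over self-avoiding $\g'$. Any rerouting changes both the length bookkeeping and the multiplicity count, and one must show the combinatorics is still beaten by $e^{-\b n}$. Second, the bound $\bbE_{0,L}^H\big[\sum_{\text{sites}}\mu^m\,\big|\,\g\geq -j\big]=O(1)$, uniformly in $j,L,H$, is the crux and is not delivered by the results you cite: Proposition~\ref{prop:1} only says that the probability of irregularity at a fixed column vanishes as $\b\to\infty$, not that an exponential moment of the run lengths on $\cL_1$ is bounded under the wall conditioning. (There is also a bookkeeping slip: the excess length is $n-|h_a-h_b|$, not $n-m$, and you must additionally sum over the $O(m^2)$ choices of $a,b$ inside a site of length $m$.)

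The paper avoids both difficulties by not attempting a wall-preserving surgery. For $\ell\geq L^{1-\d}$, the conditioning is removed at cost $e^{c''\b L^{1-3\d}}$ via the crude lower bound \eqref{eq:13}, after which a Peierls bound (when the last intersection with $\cL_1$ is at height $|Y|\le\ell$, forcing excess length $\geq\ell$) together with the DKS large-deviation estimates (when $|Y|>\ell$) gives $e^{-m_\b\ell}$ for the \emph{unconditioned} probability; this wins since $\ell\geq L^{1-\d}\gg L^{1-3\d}$. For $\ell<L^{1-\d}$, the recursive halving behind \eqref{eq:3} is iterated down to scale $L_0=\ell^{1/(1-2\d)}$, where the first case applies. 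This multi-scale reduction reuses the machinery already built for Proposition~\ref{prop:1} and sidesteps the self-avoidance and site-count issues entirely.
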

\begin{corollary}
  \label{cor:1}
Let $N_{\rm ext}(\g)$ denote the external zeros as defined in \eqref{next}.
Then, for any fixed constant $a>0$:
\begin{equation}
  \label{eq:next1}
\lim_{\b\to +\infty}\ \sup_{j,L}\ \maxtwo{H\geq -j}{ |H|\le
  L}\,\bbE_{0,L}^H\left(e^{\,a N_{\rm ext}(\g)}\mid \g\geq -j\right)=1
\end{equation}
\end{corollary}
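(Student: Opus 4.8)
The plan is to deduce Corollary \ref{cor:1} from the exponential tail bound of Proposition \ref{prop:4} by a Cauchy--Schwarz decoupling of the left and right external zeros. Decompose $N_{\rm ext}(\g)=\ell_L(\g)+\ell_R(\g)$, where $\ell_L(\g)$ (resp.\ $\ell_R(\g)$) is the number of vertices $(i+\tfrac12,0)\in\g$ with $i\le -1$ (resp.\ $i\ge L$), i.e.\ the contacts of $\g$ with the zero line lying to the left of, resp.\ to the right of, the strip $[0,L-1]$. Since $\g$ is self-avoiding, each vertex $(i+\tfrac12,0)$ is visited at most once, so if $\g$ has at least $\ell$ left external zeros then its leftmost zero $X(\g)$ satisfies $-X(\g)\ge \ell-\tfrac12$; summing the estimate \eqref{eq:11} over positions therefore gives $\bbP_{0,L}^H(\ell_L(\g)\ge \ell\mid \g\ge -j)\le C e^{-m_\b\ell}$, uniformly in $j,L$ and in $H$ with $H\ge -j$, $|H|\le L$, and the analogous bound for $\ell_R$.

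Now apply Cauchy--Schwarz:
\begin{align}
\bbE_{0,L}^H\big(e^{a N_{\rm ext}(\g)}\mid \g\geq -j\big)\le \bbE_{0,L}^H\big(e^{2a\ell_L(\g)}\mid \g\geq -j\big)^{1/2}\,\bbE_{0,L}^H\big(e^{2a\ell_R(\g)}\mid \g\geq -j\big)^{1/2}.
\end{align}
For the first factor, $\bbE_{0,L}^H(e^{2a\ell_L}\mid \g\ge -j)\le 1+\sum_{\ell\ge1}e^{2a\ell}\,\bbP_{0,L}^H(\ell_L\ge\ell\mid \g\ge-j)\le 1+C\sum_{\ell\ge1}e^{(2a-m_\b)\ell}$, which for $\b$ large enough that $m_\b>2a$ equals $1+Ce^{2a-m_\b}/(1-e^{2a-m_\b})$ and tends to $1$ as $\b\to\infty$, uniformly over the stated range of $j,L,H$ since \eqref{eq:11} is uniform there. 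The same applies to the $\ell_R$ factor, so $\limsup_{\b\to\infty}$ of the left-hand side is at most $1$; combined with the trivial lower bound $\bbE_{0,L}^H(e^{aN_{\rm ext}}\mid \g\ge-j)\ge 1$ (because $N_{\rm ext}\ge 0$), this yields \eqref{eq:next1}.

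The one ingredient that is not literally Proposition \ref{prop:4} is the right-hand analogue, namely the exponential tail for the rightmost zero $X'(\g)$; I expect this to be the (minor) main obstacle. It is obtained by rerunning the surgery/Peierls argument behind Proposition \ref{prop:4} at the endpoint $(L-\tfrac12,H)$ instead of at $(\tfrac12,0)$. In the only regime actually used later in this paper, namely the recursions of Section \ref{upbop} where both relevant endpoints lie on the zero line ($H=0$), no new estimate is needed: reflecting $\g$ through the vertical line $x=L/2$ interchanges $\ell_L$ and $\ell_R$ and preserves the path ensemble, so the bound for $X'$ follows from the one for $X$. Apart from this reduction, the proof is a routine Cauchy--Schwarz plus geometric summation.
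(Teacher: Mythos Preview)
Your proof is correct and follows essentially the same route as the paper: decompose $N_{\rm ext}=N_-+N_+$, apply Cauchy--Schwarz, and bound each factor via the observation $N_-(\g)\le |X(\g)|$ together with the exponential tail of Proposition~\ref{prop:4}. The paper dispatches the right-hand factor with the single word ``symmetry'', while you spell out that this is literally a reflection when $H=0$ (the only case used downstream) and would otherwise require rerunning Proposition~\ref{prop:4} from the right endpoint; this extra care is warranted, but the underlying argument is the same.
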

\begin{proof}[Proof of the Corollary]
We may write $N_{\rm ext}=N_-(\g)+N_+(\g)$, where $N_+(\g)$ (resp.\ $N_-(\g)$) denotes the number of zeros to the right (resp.\ left) of the interval $[0,L]$. By Schwarz' inequality and using symmetry it suffices to prove the statement \eqref{eq:next1} with $N_{\rm ext}$ replaced by $N_-(\g)$.  Clearly $N_-(\g)\le |X(\g)|$. The statement then follows at once from
Proposition \ref{prop:4}.  
\end{proof}
\begin{proof}[Proof of Proposition \ref{prop:4}]
Fix $\d\le 1/10$ and assume first that $\ell\geq L^{1-\d}$. In this
case, using \eqref{eq:13},  it suffices to prove the required estimate for the
unconditioned probability. If by
traveling along the path the last intersection with the column $\cL_1$ is at
height $Y$ with $|Y|\le \ell$ then the part of the path between the
origin and the vertex $(1,Y)$ has an excess length of at least $\ell$.
A standard Peierls argument shows that the unconditioned probability of such an
event is bounded from above by $e^{-m_\b\ell}$ with $\lim_{\b\to
  \infty}m_\b=+\infty$. If instead $|Y|\geq \ell$ we can appeal to the
large deviation bounds in \cite{DKS}*{Section 4} to get the same result.

Suppose now that
$\ell \le L^{1-\d}$. The argument leading to \eqref{eq:3}, 
with $L_0=\ell^{1/(1-2\d)}$,  implies the estimate 
\begin{gather*}
 \sup_{j,L}\,\maxtwo{H\geq -j}{ |H|\le L}\,\bbP_{0,\ell}^H\left(X(\g)= -\ell\mid
   \g \geq -j\right)\\\le 
\sup_{j}\,\maxtwo{H\geq -j}{|H|\le 2L_0}\,
\bbP_{0,L_0}^H\left(X(\g)= -\ell\mid \g \geq -j\right)+ \sum_{n\geq 0}e^{-c_\b(L_0\,2^n)^{1-2\d}}. 
\end{gather*}
Using the first part of the proof we get that both terms in the
r.h.s.\ above can be bounded from above by $e^{-m_\b\ell}$ with $\lim_{\b\to \infty}m_\b=+\infty$.
\end{proof}
The last result says that, under $\bbP_{0,L}$, the path $\g$ is likely
to start with a horizontal bond. 
\begin{lemma}
\label{prop:oriz} Let $f_0$ be the first edge of the path $\g\in
\O_{0,L}$. Then
\[
\lim_{\b\to \infty}\sup_L \bbP_{0,L}\left(\text{$f_0$ is vertical}\right)=0
\]  
\end{lemma}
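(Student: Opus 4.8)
The plan is to follow the multiscale/renewal strategy already used for Propositions \ref{prop:1} and \ref{prop:4}, exploiting the fact that $\{f_0\text{ vertical}\}$ depends only on the portion of $\g$ near the left endpoint. A remark that shapes the argument: unlike the event $\cA$ of Proposition \ref{prop:1}, the event $\{f_0\text{ vertical}\}$ forces an excess length only when the path ends at height $0$ — for a path to $(L-\tfrac12,H)$ with $H\neq0$ there are minimal-length paths whose first edge is vertical — so one must carry the endpoint height $H$ along the recursion and control it. Fix a small $\d\in(0,\tfrac1{10})$ and a constant $C_\d>0$ of order $\d$, and set
\[
p_L:=\suptwo{L'}{|L-L'|\le L^{1-\d}}\ \maxtwo{H\in\bbZ}{|H|\le C_\d L^{1-\d}}\ \hat\bbP^{H}_{0,L'}\big(f_0\text{ vertical}\big),
\]
where $\hat\bbP^H_{0,L'}$ denotes, as in the appendix, the (wall‑free) ensemble on paths from $(\tfrac12,0)$ to $(L'-\tfrac12,H)$ conditioned to be regular at $L'$. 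The inductive step to establish is
\[
p_L\ \le\ p_{L/2}+e^{-c\,L^{1-2\d}},\qquad c=c(\d,\b),\quad c\to\infty\ \text{as}\ \b\to\infty,
\]
proved by the very computation that gave \eqref{eq:3}: on the good event $\cG_{L'}$ (whose complement has probability $\le e^{-cL^{1-2\d}}$ by Lemma \ref{A.2}, in the easier wall‑free case) one conditions on the rightmost admissible crossing $\xi$ near $L'/2$ and on the height $H_\xi$ there; by construction $|H_\xi|\le |H|\,\xi/L'+\d^2(L')^{1-\d}\le C_\d(L'/2)^{1-\d}$ provided $C_\d$ is large enough relative to $\d^2$ (the height contracts by a factor $\approx\tfrac12$ per step, while the window shrinks only by $2^{-(1-\d)}$), and the part of $\g$ to the left of $\cL_\xi$ has law $\hat\bbP^{H_\xi}_{0,\xi}$, with $\{f_0\text{ vertical}\}$ measurable with respect to it.

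The base case is the one genuinely new computation. For $L\le L_0$ ($L_0$ fixed, $\b$ large) and, by reflection symmetry, $H\ge0$, let $n_{\min}=L-1+H$ be the minimal length of a path from $(\tfrac12,0)$ to $(L-\tfrac12,H)$. The paths of length $n_{\min}$ are exactly the $\binom{n_{\min}}{H}$ monotone ones — automatically self-avoiding and regular at $L$ — of which precisely $\binom{n_{\min}-1}{H-1}$ start with a vertical edge; every other admissible path has length $\ge n_{\min}+2$. Using the crude bound $4\cdot 3^{\,n-1}$ for the number of self-avoiding paths of length $n$ issued from a fixed vertex, one obtains, for $\b$ large (so that $9e^{-2\b}\le\tfrac12$),
\[
\hat\bbP^{H}_{0,L}(f_0\text{ vertical})\ \le\ \frac{\binom{n_{\min}-1}{H-1}+4\cdot3^{\,n_{\min}-1}\sum_{k\ge1}(9e^{-2\b})^k}{\binom{n_{\min}}{H}}\ \le\ \frac{H}{n_{\min}}+C\,3^{\,2L_0}e^{-2\b}.
\]
Letting $\b\to\infty$ leaves $H/n_{\min}\le C_\d L^{1-\d}/(L-1)$, which tends to $0$ as $L_0\to\infty$.

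Iterating the one-step inequality down to a fixed scale $L_0$ gives $\sup_L p_L\le p_{L_0}+\sum_{n\ge0}e^{-c(L_0 2^n)^{1-2\d}}$; sending $\b\to\infty$ (so $c\to\infty$ and, by the base case, $\limsup_\b p_{L_0}\le C_\d L_0^{-\d}$ up to lower order) and then $L_0\to\infty$ yields $\lim_{\b\to\infty}\sup_L p_L=0$. The lemma — the case $H=0$ without the hat — then follows exactly as in the last paragraph of the proof of Proposition \ref{prop:1}: for $L>L_0$ one conditions on $\cG_L$ (with $\bbP_{0,L}(\cG_L^c)\le e^{-cL^{1-2\d}}$ from Lemma \ref{A.2}), observes that the part of $\g$ to the left of the rightmost admissible crossing has law $\hat\bbP^{H_\xi}_{0,\xi}$ with $|H_\xi|\le\d^2 L^{1-\d}$ and $\xi\sim L/2$, and invokes the bound on $p_L$ just proven; for $L\le L_0$ a direct Peierls estimate (now $f_0$ vertical costs an excess length $\ge 2$, since $H=0$) gives $\bbP_{0,L}(f_0\text{ vertical})\le C'3^{L_0}e^{-2\b}$.

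The main obstacle is the point flagged in the first paragraph. One cannot simply bound $\bbP_{0,L}(f_0\text{ vertical})$ by the probability of a short-range excess length: for $L$ exponentially large in $\b$ that probability is \emph{not} small, because there are $\Theta(L)$ independent ``bump'' sites, so the renewal decomposition is essential to confine the analysis to a neighbourhood of the left endpoint. Consequently the recursion must run with a nonzero endpoint height at intermediate scales, which is exactly why the base case has to be handled through a count of minimal paths (yielding the crucial $H/n_{\min}$ factor) rather than a bare Peierls bound, and why one has to check that the height window $\{|H|\le C_\d L^{1-\d}\}$ is reproduced under the decomposition.
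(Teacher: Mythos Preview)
Your argument is correct, but it is considerably more involved than the paper's. The paper exploits the fact that $\{f_0\text{ vertical}\}$ depends only on the first edge of $\g$ and proceeds in two short steps: first, in the \emph{grand canonical} ensemble $\cP_L$ (where the endpoint height is free), decomposing over the number $n\ge1$ of initial consecutive vertical edges and using vertical translation invariance gives directly $\cP_L(f_0\text{ vertical})\le 2\sum_{n\ge1}e^{-\b n}\to0$; second, the equivalence-of-ensembles Lemma~\ref{lem:equiv} transfers this to $\bbP_{0,L}$ uniformly in $L$. The grand canonical step sidesteps precisely the obstacle you identify---that a vertical first edge costs no excess length when $H\neq0$---because in that ensemble the height is summed out and the initial vertical run contributes a pure factor $e^{-\b n}$.

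Your route, by contrast, runs the full multiscale recursion of Proposition~\ref{prop:1} while tracking the endpoint height, and handles the base scale through an explicit count of minimal (monotone) paths yielding the $H/n_{\min}$ factor. This works, and the check that the height window $\{|H|\le C_\d L^{1-\d}\}$ reproduces under halving (since $H$ contracts by $\tfrac12$ while the window shrinks only by $2^{-(1-\d)}$) is the key new ingredient. What the paper's approach buys is brevity and the avoidance of any base-case combinatorics; what your approach buys is independence from the equivalence-of-ensembles lemma, at the cost of repeating the recursion machinery.
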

\begin{proof}
We will first prove the result in the ``grand canonical'' ensemble  
$\{\cO_L,\cP_L\}$, where $\cO_L$ is the set of self-avoiding paths
starting at $(1/2,0)$  and ending at $(L-1/2,H)$ for \emph{some} $H\in \bbZ$ and
$\cP_L(\g)=e^{-\b|\g|}/\Xi_L$ with 
$\Xi_L=\sum_{\g\in \cO_L}e^{-\b|\g|}$. Indeed, if we decompose over the number of the first consecutive
vertical edges, we
immediately get that
$\cP_L\left(\text{$f_0$ is
    vertical}\right)\le 2\sum_{n\geq 1} e^{-\b n}$. 
    The result for the canonical ensemble  
$\bbP_{0,L}(\cdot)=\cP_L(\cdot\mid \g \text{ ends at zero height}) $ follows from Lemma
\ref{lem:equiv} in Appendix \ref{B}.
\end{proof}

From Lemma \ref{prop:oriz} and Proposition \ref{prop:1} one has the following
\begin{corollary}\label{cororeg}
Let $\widehat\cZ_{0,L}$ denote the partition function obtained by restricting $\cZ_{0,L}$ to paths $\g\in\O_{0,L}$ that are regular at both $x=1$ and $x=L-1$ and such that the first and last edge of $\g$ is horizontal. Then there exists $u_\b>0$ with $u_\b\to 0$ as $\b\to\infty$ such that $\cZ_{0,L}\leq (1+u_\b)\widehat  \cZ_{0,L}$ for any $L\geq 1$. 
\end{corollary}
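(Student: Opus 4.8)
The plan is to show that, uniformly in $L$, the measure $\bbP_{0,L}$ assigns vanishingly small mass (as $\b\to\infty$) to the set $\cB_L\subset\O_{0,L}$ of paths violating at least one of the four constraints defining $\widehat\cZ_{0,L}$, namely: non-regularity at $x=1$, non-regularity at $x=L-1$, first edge vertical, last edge vertical (recall that an edge is either horizontal or vertical, so ``not horizontal'' means ``vertical''). Once we have $\sup_L\bbP_{0,L}(\cB_L)\le\tilde u_\b$ with $\tilde u_\b\to0$, we immediately conclude: since $\widehat\cZ_{0,L}=\sum_{\g\in\O_{0,L}\setminus\cB_L}e^{-\b|\g|}=\cZ_{0,L}\bigl(1-\bbP_{0,L}(\cB_L)\bigr)\ge(1-\tilde u_\b)\,\cZ_{0,L}$, we get $\cZ_{0,L}\le(1+u_\b)\,\widehat\cZ_{0,L}$ with $u_\b:=\tilde u_\b(1-\tilde u_\b)^{-1}\to0$. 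By a union bound it therefore suffices to control the four bad events one at a time.

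For the event that the first edge $f_0$ is vertical, Lemma \ref{prop:oriz} gives directly $\sup_L\bbP_{0,L}(f_0\text{ vertical})\to0$ as $\b\to\infty$. The event that the \emph{last} edge is vertical has, for each $L$, exactly the same probability: $\bbP_{0,L}$ is invariant under the reflection $x\mapsto L-x$, which reverses the path and preserves $|\g|$ (both endpoints of paths in $\O_{0,L}$ sit at height $0$) and exchanges the first and last edges. So both ``vertical edge'' events are controlled uniformly in $L$.

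For the events $\cA=\{\g\text{ not regular at }u\}$ with $u\in\{1,L-1\}\subset[0,L]$, I would invoke Proposition \ref{prop:1}, whose statement is however conditional on a wall event $\{\g\ge-j\}$, whereas $\bbP_{0,L}$ carries no wall. To remove the conditioning, fix $L$ and $\b$ (large enough that $\cZ_{0,L}<\infty$) and let $j\to\infty$: the events $\{\g\ge-j\}$ increase to all of $\O_{0,L}$, since every finite path has a lowest vertex, hence $\bbP_{0,L}(\g\ge-j)\uparrow1$ and $\bbP_{0,L}(\cA\cap\{\g\ge-j\})\uparrow\bbP_{0,L}(\cA)$, so $\bbP_{0,L}(\cA)=\lim_{j\to\infty}\bbP_{0,L}(\cA\mid\g\ge-j)$. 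Because the supremum in Proposition \ref{prop:1} runs over $j$ and $L$ as well, each term $\bbP_{0,L}(\cA\mid\g\ge-j)$ (taken with $H=0$) is at most that $L$-independent supremum, which tends to $0$ as $\b\to\infty$; passing to the limit yields a uniform-in-$L$ bound on $\bbP_{0,L}(\g\text{ not regular at }u)$ for $u=1$ and $u=L-1$. Adding the four estimates gives $\sup_L\bbP_{0,L}(\cB_L)=:\tilde u_\b\to0$, and the reduction of the first paragraph finishes the proof. The only genuinely delicate point is this removal of the $\{\g\ge-j\}$ conditioning from Proposition \ref{prop:1} (together with the observation that its supremum is already uniform in $j$ and $L$, which is what makes the final bound uniform in $L$); everything else is a union bound plus the reflection symmetry of $\bbP_{0,L}$.
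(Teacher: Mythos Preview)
Your proof is correct and follows exactly the route the paper intends: the corollary is stated there as an immediate consequence of Lemma~\ref{prop:oriz} and Proposition~\ref{prop:1}, and you have simply filled in the details (union bound, reflection symmetry for the last edge, and the $j\to\infty$ limit to remove the wall conditioning). The only point worth noting is that the $j\to\infty$ argument implicitly uses that $\cZ_{0,L}<\infty$ for $\b$ large, so that $\bbP_{0,L}$ is concentrated on paths of finite length; this is harmless in the regime $\b\geq\b_0$ considered throughout.
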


\section{}
\label{B}
Here, we prove two estimates on moderate deviations for random walks and self-avoiding paths. In the
random walk setting 
the arguments that we use  are rather standard but we decided to detail
them, on one hand in order to pave the way for the self-avoiding paths setting, and
on the other hand to get estimates that hold uniformly on all scales
and for all random walks in our class. 
\begin{proposition} 
\label{prop:2}
There exists $C\geq 1$ such that, for any $\s^2\in (0,1/2]$ and for any
random walk kernel $p\in \cP(\sigma^2)$, the
following holds.\\
(a) For any $L$,
\begin{equation}
  \label{eq:4}
\frac{C^{-1}}{\max\left(1,\sqrt{\sigma^2L}\right)}\leq  Z_{0,L}\le \frac{C}{\sqrt{\sigma^2L}}.
\end{equation}
(b) For all $j\geq 0$ and for all $L\geq C
  (j+1)^2 /\sigma^2$, 
  \begin{equation}
    \label{eq:5}
\bbP_{0,L}(\g(L/2)\geq -j)\le 3/4,
  \end{equation}
  where $\g(L/2)=\g_{\lfloor L/2\rfloor}$.
\end{proposition}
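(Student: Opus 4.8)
The plan is to deduce both parts from the uniform local limit estimate (a) for the return probability $Z_{0,L}=\bbP_0(\g_L=0)$; part (b) then follows by a short symmetry argument. Set $p_n(y):=\bbP_0(\g_n=y)$; all constants below depend only on $c_0$, not on $\si^2$, $L$ or the kernel $p\in\cP(c_0,\si^2)$. \emph{Upper bound in (a), by Fourier inversion.} Since $p$ is symmetric, $\hat p(\theta):=\sum_k p(k)\cos(k\theta)$ is real and $Z_{0,L}=\frac1{2\pi}\int_{-\pi}^{\pi}\hat p(\theta)^L\,\dd\theta$. First, $1+\hat p(\theta)=\sum_k p(k)(1+\cos k\theta)\geq 2p(0)\geq 2(1-\si^2)\geq1$, so $0\leq\hat p(\theta)\leq1$ for all $\theta$. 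Next, $|\cos x-1+x^2/2|\leq|x|^3/6$ together with $\sum_k|k|^3p(k)\leq c_0^{-1}\si^2$ from Definition \ref{def:rw} give
\[
\hat p(\theta)\leq 1-\tfrac{\si^2\theta^2}{2}+\tfrac{c_0^{-1}\si^2|\theta|^3}{6}\leq 1-\tfrac{\si^2\theta^2}{4}\leq e^{-\si^2\theta^2/4}\qquad(|\theta|\leq\theta_0),
\]
for a small $\theta_0=\theta_0(c_0)\in(0,\pi]$; on $\theta_0\leq|\theta|\leq\pi$ the bound $p(1)\geq\tfrac12 c_0\si^2$ gives $1-\hat p(\theta)\geq p(1)(1-\cos\theta_0)\geq c_2\si^2$, hence $\hat p(\theta)^L\leq e^{-c_2\si^2L}$. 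Splitting $\int_{-\pi}^\pi$ at $|\theta|=\theta_0$ and using $\sup_{x\geq1}\sqrt x\,e^{-c_2x}<\infty$, one obtains $Z_{0,L}\leq C/\sqrt{\si^2L}$ when $\si^2L\geq1$, while for $\si^2L\leq1$ one simply uses $Z_{0,L}\leq1\leq(\si^2L)^{-1/2}$.

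\emph{Lower bound in (a), by a second moment argument.} For even $L=2n$, symmetry gives $Z_{0,2n}=p_{2n}(0)=\sum_y p_n(y)^2$. For any integer $M\geq1$, Cauchy--Schwarz gives $\bbP_0(|\g_n|\leq M)\leq\sqrt{2M+1}\,\big(\sum_y p_n(y)^2\big)^{1/2}$, hence $Z_{0,2n}\geq\bbP_0(|\g_n|\leq M)^2/(2M+1)$. Choosing $M=\lceil2\sqrt{\si^2n}\rceil$ and applying Chebyshev with $\var(\g_n)=n\si^2$ makes $\bbP_0(|\g_n|\leq M)\geq3/4$, so $Z_{0,2n}\geq c/(\sqrt{\si^2n}+1)\geq c/(\sqrt{\si^2L}+1)$. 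For odd $L=2n+1$ the inequality $Z_{0,2n+1}\geq p(0)\,Z_{0,2n}\geq\tfrac12 Z_{0,2n}$ propagates the bound, and $L=1$ is trivial since $Z_{0,1}=p(0)\geq\tfrac12$. Altogether $Z_{0,L}\geq c'/(\sqrt{\si^2L}+1)\geq c''/\max(1,\sqrt{\si^2L})$; note this step uses only the variance and $p(0)\geq\tfrac12$.

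\emph{Proof of (b).} The involution $\g\mapsto-\g$ preserves $\bbP_{0,L}$, so $\bbP_{0,L}(\g(L/2)\geq-j)=\bbP_{0,L}(\g(L/2)\leq j)$; summing and using inclusion--exclusion, $2\,\bbP_{0,L}(\g(L/2)\geq-j)=1+\bbP_{0,L}(|\g(L/2)|\leq j)$, so it suffices to show $\bbP_{0,L}(|\g(L/2)|\leq j)\leq\tfrac12$. With $m=\lfloor L/2\rfloor$, the midpoint law of the bridge is $\bbP_{0,L}(\g_m=y)=p_m(y)p_{L-m}(y)/Z_{0,L}$, whence
\[
\bbP_{0,L}(|\g(L/2)|\leq j)=\frac1{Z_{0,L}}\sum_{|y|\leq j}p_m(y)p_{L-m}(y)\leq\frac{(2j+1)\,\|p_m\|_\infty\,\|p_{L-m}\|_\infty}{Z_{0,L}}.
\]
The upper bound of (a) gives $\|p_n\|_\infty\leq C/\sqrt{\si^2 n}$ for every $n\geq1$ (indeed $\|p_{2k}\|_\infty=p_{2k}(0)$ by Cauchy--Schwarz and $\|p_{2k+1}\|_\infty\leq\|p_{2k}\|_\infty$), and since $L\geq C(j+1)^2/\si^2$ forces $m,L-m\geq L/3$ and $\si^2L\geq1$, the lower bound of (a) applies to $Z_{0,L}$. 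Hence $\bbP_{0,L}(|\g(L/2)|\leq j)\leq C_3(j+1)/\sqrt{\si^2L}\leq C_3/\sqrt{C}$, which is $\leq\tfrac12$ once the constant $C$ in (b) is chosen large enough.

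\emph{Main obstacle.} The one genuinely delicate point is the \emph{uniformity} in the upper bound of (a): one must check that $\theta_0$, the gap rate $c_2$, and the final constant $C$ depend on $c_0$ only, and not on $\si^2$, $L$ or $p$. This is exactly where both hypotheses of Definition \ref{def:rw} are needed — the third-moment bound controls the small-$\theta$ Taylor expansion of $\hat p$, while $p(1)\geq\tfrac12 c_0\si^2$ produces the uniform spectral gap $\hat p(\theta)\leq1-c_2\si^2$ away from $\theta=0$. The lower bound in (a) and the whole of (b) are comparatively soft.
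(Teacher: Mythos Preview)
Your proof is correct. For the upper bound in (a) you follow essentially the same Fourier route as the paper (a Gaussian bound on $\hat p$ near the origin from the third-moment hypothesis, and a uniform spectral gap from $p(1)\geq\tfrac12 c_0\si^2$ away from it).

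For the lower bound in (a) and for all of (b), your arguments are genuinely different and more elementary. The paper lower-bounds $Z_{0,L}$ by bounding the Fourier integral $\int|f_n(t)|^2\,dt$ from below via a second-order Taylor expansion of $\varphi$ (again using the third-moment assumption), handling the regime $\si^2 L\leq 1$ separately with $p(0)^L$; your Cauchy--Schwarz/Chebyshev argument uses only the variance and $p(0)\geq\tfrac12$. For (b), the paper writes the bridge probability as a ratio of Fourier integrals, proves $\int|f_n|/\int|f_n|^2\leq \sqrt{2}+\d$, and then invokes Berry--Esseen to get $\bbP(S_n\geq -j)\leq\tfrac12+\d$. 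Your symmetry identity $2\,\bbP_{0,L}(\g(L/2)\geq -j)=1+\bbP_{0,L}(|\g(L/2)|\leq j)$ reduces (b) to a sup-norm bound $\|p_n\|_\infty\leq C/\sqrt{\si^2 n}$, which you already extracted from (a), and avoids Berry--Esseen entirely. What the paper's approach buys is that the same Fourier machinery (the ratio $\int|f_n|/\int|f_n|^2$) is reused verbatim in the self-avoiding path analogue, Proposition~\ref{prop:3}, where the input is the DKS characteristic-function bounds of Lemma~\ref{lemma:DKS}; your symmetry reduction would also transfer, but the sup-norm step would need to be reworked in that setting.
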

\begin{proof}\ \\
(a) Let $S_n=X_1+\dots +X_n$, $S_0=0$, where $\{X_i\}_{i=1}^\infty$ are
i.i.d. random variables with law $p\in \cP(\sigma^2)$. In the sequel
we will write $f_n(t)$ for the characteristic function of
$S_n/\sigma_n$ where $\sigma_n=\sqrt{n} \sigma$. Clearly
$f_n(t)=\varphi(t/\sigma_n)^n$ where $\varphi(\cdot)$ is the
characteristic function of the variable $X_1$. Let $(S_n,S_n')$ be two independent copies of the same
random variable.
Using the identity
\[
\mathds{1}_{\{S_n=S_n'\}}= \frac{1}{2\pi \sigma_n}\int_{-\pi \sigma_n}^{\pi\sigma_n}e^{i\frac
  {t}{\sigma_n} (S_n-S_n')} dt 
\]
we can write
\begin{gather}
\label{eq:20}
\bbP(S_n=S_n') = 
\frac{1}{2\pi \sigma_n}\int_{-\pi \sigma_n}^{\pi\sigma_n} dt\
  |f_n(t)|^2
\end{gather}
Notice that $|\varphi(t/\sigma_n)|\le e^{\frac 12(|\varphi(t/\sigma_n)|^2-1)}$
and that, using the assumption $p\in\cP(\si^2)$, 
\begin{align*}
|\varphi(t/\sigma_n)|^2-1&= -\var\left(\cos(t
    X_1/\sigma_n)\right)-\var\left(\sin(t X_1/\sigma_n)\right)\\
&\le -2p(0)p(1)(1-\cos(t/\sigma_n))\\
&\le -c_0(1-\sigma^2)\frac{t^2}{2 n}\le -\frac{c_0}{4n}t^2
\end{align*}
for all $|t|\le \pi \sigma_n$. Above we used the formula 
$\var(Z)=\frac 12 \bbE((Z-Z')^2)$ where $Z,Z'$ are two independent
copies of the same random variable to get 
\[
\var\left(\cos(t X_1/\sigma_n)\right)+\var\left(\sin(t
  X_1/\sigma_n\right)
\geq p(0)p(1)\left[(1-\cos(t/\sigma_n))^2+\sin(t/\sigma_n)^2\right].
\]
In conclusion $|f_n(t)|\le e^{-c_0
  t^2/8}$ and
\[
\bbP(S_n=S_n')\le \frac{C}{\s_n}
\]
for some constant $C$ depending on $c_0$. 
To prove a lower bound we use 
\[|e^{it}-1-it + \frac{t^2}{2}|\le
\min\left(|t|^2, |t|^3/6\right)],
\] 
to
write
\[
|\varphi(t/\sigma_n)-1+ \frac{t^2}{2n}| \le
\frac{t^2}{\sigma_n^2}\bbE\left(X_1^2\wedge \frac{|t|}{6
    \sigma_n}|X_1|^3\right)\le c \frac{|t|^3}{6 n \sigma_n},
\] 
where we used the assumptions $\bbE(X_1)=0$ and $\bbE(|X_1|^3)\le c \sigma^2$. Thus, if
$\sigma_n\geq 1$ and by choosing $\d$ small enough independent of $\sigma^2$, we
obtain
\begin{gather*}
\bbP(S_n=S_n') = 
\frac{1}{2\pi \sigma_n}\int_{-\pi \sigma_n}^{\pi\sigma_n} dt\
  |f_n(t)|^2\\
\geq \frac{1}{2\pi \sigma_n}\int_{-\d}^{\d} dt\
  |f_n(t)|^2\geq \frac{1}{2\pi
    \sigma_n}\int_{-\d}^{\d} dt\ e^{-t^2/4}\geq \frac{C_\d}{\sigma_n}.
\end{gather*}
If instead $\sigma_n\le 1$ we simply write
\[
\bbP(S_n=S_n')\geq \bbP(S_n=0)^2\geq p(0)^{2 n}\geq (1-\sigma^2)^{2n}\ge
e^{-4}.
\]
Equation \eqref{eq:4} follows if we observe that (without loss of
generality we assume $L$ even)
\[
Z_{0,L}= \bbP(S_{L/2}=S'_{L/2}).
\] 
We turn to  the proof of part (b). With the previous notation we can write ($n=L/2$)
\begin{gather}
\bbP(\g(n)\geq -j\tc \g(2n)=0) =\bbP(S'_n\geq -j\mid S_n=S_n')\nonumber\\ 
=\frac{\int_{-\pi \sigma_n}^{\pi\sigma_n} dt\
  f_n(t)\bbE\left(\mathds{1}_{S'_n\geq -j}e^{-i\frac{t}{\sigma_n}S'_n}\right)}{\int_{-\pi \sigma_n}^{\pi\sigma_n}dt\ |f_n(t)|^2}\nonumber\\
\label{eq:10}
\le \frac{\int_{-\pi \sigma_n}^{\pi\sigma_n} dt\
  |f_n(t)|}{\int_{-\pi \sigma_n}^{\pi\sigma_n}dt\ |f_n(t)|^2}\  \bbP(S_n'\geq -j).
\end{gather}
We now claim that, given $0<\d\le 0.02$, we can choose $C=C(\d)$ so large
that, for $n\geq C/\sigma^2 $,
\begin{equation}
  \label{eq:6}
\frac{\int_{-\pi \sigma_n}^{\pi\sigma_n} dt\
  |f_n(t)|}{\int_{-\pi \sigma_n}^{\pi\sigma_n}dt\ |f_n(t)|^2}\le
\sqrt{2}+\d, 
\end{equation}
and for $n\geq C(j+1)^2/\sigma^2 $
\begin{equation}
  \label{eq:7}
 \bbP(S_n'\geq -j)\le 
\frac 12 + \d.
\end{equation}
These bounds imply \eqref{eq:5}.

We begin by discussing \eqref{eq:6}. 
Consider first $\int_{-\pi \sigma_n}^{\pi\sigma_n} dt\
|f_n(t)|$. For any $A>0$ we write
\begin{gather*}
 \int_{-\pi \sigma_n}^{\pi\sigma_n} dt\ |f_n(t)| \le \int_{-\infty}^\infty dt\
 e^{-t^2/2}+ J_1+J_2+J_3= \sqrt{2\pi}+ J_1+J_2+J_3,
\end{gather*}
where 
\begin{align*}
  J_1= \int_{-A}^{A} dt\ |f_n(t)-e^{-t^2/2}|,\quad J_2= \int_{|t|\geq A} dt\ e^{-t^2/2},\quad
J_3= \int_{A\le |t| \le \pi \sigma_n}dt\ |f_n(t)|.
\end{align*}
Using the bound $|f_n(t)|\le e^{-c_0t^2/4}$ we can always choose $A$
in such a way that $J_2,J_3\le \d/6$. Given $A$, we can use as
before the second order Taylor expansion and choose $C$ so large
(independent of $\sigma^2$) that
$J_1\le \d/6$. In conclusion
\[
 \int_{-\pi \sigma_n}^{\pi\sigma_n} dt\ |f_n(t)| \le \sqrt{2\pi}+
 \d/2.
\]
To lower bound $\int_{-\pi \sigma_n}^{\pi\sigma_n}dt \,|f_n(t)|^2$ we may simply restrict the integral to $|t|\le A$ and get
\begin{align*}
 \int_{-\pi \sigma}^{\pi\sigma}dt\ |f(t)|^2 &\geq \int_{-A}^{A}dt\  e^{-t^2} -
 \int_{-A}^{A} dt\ |\ |f(t)|^2-e^{-t^2}|\\
&= \sqrt{\pi} -\int_{|t|\geq A} dt\  e^{-t^2}-
 \int_{-A}^{A} dt\ |\ |f(t)|^2-e^{-t^2}|
\end{align*}
and choose again $A,C$ large enough to make the two error terms smaller
than $\d/2$. Thus the l.h.s.\ of \eqref{eq:6} is smaller than
$\frac{\sqrt{2\pi}+\d/2}{\sqrt{\pi}-\d/2}\le \sqrt{2}+\d$ for $\d\le 1$. 

Next we prove \eqref{eq:7}. The Berry-Essen theorem (see e.g. \cites{Feller})
gives
\[
\sup_x |\bbP(S_n/\sigma_n\le x)-\Phi(x)| \le 3\frac{\bbE(|X_1|^3)}{\sigma^3\sqrt{n}},\]
where $\Phi(\cdot)$ is the distribution function of
  the standard normal.
Thus, using $\bbE(|X_1|^3)\le c\sigma^2$, 
\[
\bbP(S_n\geq -j)\le \Phi(j/\sigma_n) + \frac{c}{2\sigma_n}\le 1/2+\d
\]
for all $n\geq C (j+1)^2/\sigma^2$ with $C$ large enough depending on $\d$.
\end{proof}

Proposition \ref{prop:2} holds also for the ensemble of  self-avoiding
paths introduced in Section \ref{latticepaths}. We need few additional notation. We will denote by $\g_{\rm max}(L/2)$ the highest intersection of 
$\g\in \O_{0,L}$
with the vertical line through the point $(\lfloor L/2\rfloor,0)$. Recall that
$\cZ_{0,L}=\sum_{\g\in \O_{0,L}}e^{-\b|\g|}$ and let $\Xi_L$ be the partition
function of the
grand canonical ensemble $\{\cO_L,\cP_L\}$ defined in the proof of
Lemma \ref{prop:oriz}. Finally, denote by
$\{\hat \cO_L,\hat \cP_L\}$ the restricted grand canonical ensemble in which the
intersection of the path $\g\in \cO_L$ with the vertical line through
the point $(L,0)$ is empty. 
\begin{proposition}
\label{prop:3}
There exists $C$  and $\b_0$ such that, for all $\b\geq \b_0$, the
following holds. \\
(a) There exists $\hat \s_\b$ with $|\hat\s^2_\b- \frac{1}{\cosh(\b)-1}|\le e^{-2(\b-\b_0)}$ such that, for all $L\geq 1$, 
\begin{equation}
  \label{eq:12}
\frac{C^{-1}}{\max\left(1,\sqrt{\hat \sigma_\b^2L}\right)}\leq  \frac{\cZ_{0,L}}{\Xi_L}\le \frac{C}{\sqrt{\hat\sigma_\b^2L}}.
\end{equation}
(b) For all $j\geq 0$ and all $L\geq C
  (j+1)^2 e^\b$, 
  \begin{equation}
    \label{eq:8}
\bbP_{0,L}(\g_{\rm max}(L/2)\geq -j)\le 3/4.
  \end{equation}
\end{proposition}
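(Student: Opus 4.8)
The plan is to reduce Proposition~\ref{prop:3} to a local central limit theorem for an \emph{effective random walk} attached to the grand canonical ensemble $\{\cO_L,\cP_L\}$, obtained from the Ornstein--Zernike analysis of \cite{DKS}*{Section~4}, and then to rerun the characteristic‑function computation from the proof of Proposition~\ref{prop:2}. For this reason I will only indicate the main steps. In particular part (a) will be the statement ``$\cZ^m_{0,L}/\Xi_L$ obeys a uniform local CLT in $m$'', of which \eqref{eq:12} is the $m=0$ case, and part (b) will follow from it by a reflection and convolution argument exactly as \eqref{eq:5} followed from \eqref{eq:4}.

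First I would set up the renewal structure. Call an integer column $u$ a \emph{cut column} of $\g\in\cO_L$ if $\g$ is regular at $u$ in the sense of Definition~\ref{regular}, i.e.\ $|\g\cap\cL_u|=1$. Cutting $\g$ at its cut columns decomposes it into a concatenation of \emph{irreducible pieces} (paths with no internal cut column), and under $\cP_L$ these pieces are i.i.d.\ with respect to the weight $e^{-\b|\cdot|}$; this is the standard quasi–one–dimensional factorization. Let $(\ell_1,h_1)$ be the horizontal width and the signed vertical displacement of one irreducible piece. By the top–bottom reflection symmetry $\bbE(h_1)=0$. The unique minimal piece is a single horizontal edge, of weight $e^{-\b}$, width $1$ and displacement $0$; every other irreducible piece has excess length at least $2$, hence weight $O(e^{-3\b})$, so a Peierls estimate gives $\bbE(e^{\a\ell_1})<\infty$ for some $\a=\a(\b)>0$ and $\bbE(\ell_1)=1+O(e^{-2\b})$. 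Setting $\hat\s_\b^2:=\var(h_1)/\bbE(\ell_1)$, the pieces contributing to $\var(h_1)$ at order $e^{-2\b}$ are the elementary ``spikes'' producing displacement $\pm1$ with one extra pair of vertical edges; enumerating these — equivalently, comparing with the directed subensemble $\hat\O_{0,L}$, which by \eqref{sosvsp} is exactly the SOS walk at the same $\b$, of variance $(\cosh\b-1)^{-1}$ — yields $|\hat\s_\b^2-(\cosh\b-1)^{-1}|\le e^{-2(\b-\b_0)}$ for $\b\geq\b_0$.

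Next, part (a). Writing $H_L$ for the endpoint height of the grand canonical path we have $\cZ^m_{0,L}/\Xi_L=\cP_L(H_L=m)$, with $\cZ^0_{0,L}=\cZ_{0,L}$. By the previous step $H_L=\sum_{i=1}^{\cN}h_i$ with $\sum_{i\le\cN}\ell_i=L$, i.e.\ $H_L$ is the position of a mean–zero, non-degenerate (since $\cP_L(h_1=0)>0$ the displacements generate $\bbZ$) effective walk with exponential step tails, observed after $\cN\approx L/\bbE(\ell_1)$ renewals. Feeding these facts into the Fourier argument detailed in the proof of Proposition~\ref{prop:2}(a) — which was spelled out precisely to ``pave the way'' here — gives, uniformly in $m$, the upper bound $\cZ^m_{0,L}/\Xi_L\le C/\sqrt{\hat\s_\b^2 L}$ together with the matching lower bound $\cZ^m_{0,L}/\Xi_L\ge C^{-1}/\sqrt{\hat\s_\b^2 L}$ whenever $\hat\s_\b^2 L\geq1$ and $|m|\le\hat\s_\b\sqrt L$; the case $m=0$ is \eqref{eq:12}. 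When $\hat\s_\b^2 L\le1$ the lower bound in \eqref{eq:12} is immediate from $\cZ_{0,L}\ge e^{-\b L}$ and $\Xi_L\le Ce^{-\b L}(1+Ce^{-\b})^{L}$ (again from the renewal structure), since then $\cZ_{0,L}/\Xi_L\ge e^{-O(L\hat\s_\b^2)}\asymp1$.

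Finally, part (b). Put $n=\lfloor L/2\rfloor$ and let $R$ be the event that $\g$ is regular at $n$; Proposition~\ref{prop:1} gives $\bbP_{0,L}(R^c)\le u_\b$ with $u_\b\to0$ as $\b\to\infty$. Conditioning on $R$ is invariant under top–bottom reflection, so under $\bbP_{0,L}(\cdot\mid R)$ the (well-defined) height $H^*:=\g(n)$ has a symmetric law; splitting the path at column $n$ and applying the uniform upper bound of the previous step to the two halves (together with $\Xi_L\asymp e^{-\tau_\b L}$) yields $\bbP_{0,L}(H^*=m\mid R)\le C'/\sqrt{\hat\s_\b^2 L}$ for all $m$. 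By symmetry,
\begin{equation*}
\bbP_{0,L}(H^*\geq-j\mid R)=\tfrac12+\tfrac12\bbP_{0,L}(H^*=0\mid R)+\sum_{m=-j}^{-1}\bbP_{0,L}(H^*=m\mid R)\le\tfrac12+\frac{(j+1)C'}{\sqrt{\hat\s_\b^2 L}},
\end{equation*}
and on $R$ one has $\g_{\rm max}(L/2)=H^*$, hence
\begin{equation*}
\bbP_{0,L}(\g_{\rm max}(L/2)\geq-j)\le u_\b+\tfrac12+\frac{(j+1)C'}{\sqrt{\hat\s_\b^2 L}}.
\end{equation*}
Since $\hat\s_\b^2\geq\tfrac14 e^{-\b}$ for $\b\geq\b_0$, the hypothesis $L\geq C(j+1)^2e^{\b}$ forces $\sqrt{\hat\s_\b^2 L}\geq\tfrac12\sqrt C\,(j+1)$, so the last term is $\le 2C'/\sqrt C$; taking $C$ and then $\b_0$ large makes the right‑hand side $\le 3/4$, which is \eqref{eq:8}. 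The one genuinely non‑routine ingredient is the first step: establishing the i.i.d.\ renewal (Ornstein–Zernike) structure and the exponential tail of $\ell_1$ for the \emph{full}, non‑directed, possibly overhanging self‑avoiding ensemble, and pinning down $\hat\s_\b^2$ to exponential accuracy; this is exactly what \cite{DKS}*{Section~4} supplies, and granted it the remaining steps are a rerun of the estimates in the proof of Proposition~\ref{prop:2}.
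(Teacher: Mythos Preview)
Your proposal is correct and rests on the same foundation as the paper (the Ornstein--Zernike machinery of \cite{DKS}*{Section 4}), but the organization differs in two places worth noting.

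For part (a) the paper does \emph{not} invoke the renewal decomposition explicitly. It packages the DKS input as Lemma~\ref{lemma:DKS} (variance and characteristic-function bounds for the endpoint height under $\cP_L$ and $\hat\cP_L$), then splits the path at the midpoint column $n=\lceil L/2\rceil$: after using Proposition~\ref{prop:1} to restrict to paths regular at $n$, the ratio $\cZ^{\rm reg}_{0,L}/\Xi_n^2$ becomes the \emph{collision probability} $\hat\cP_n\otimes\hat\cP_n(h_n(\g_1)=h_n(\g_2))=\frac{1}{2\pi\s_n}\int_{-\pi\s_n}^{\pi\s_n}|\hat f_n(t)|^2\,dt$, and Lemma~\ref{lemma:DKS}(b) handles the integral. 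Your route writes $\cZ_{0,L}/\Xi_L=\cP_L(H_L=0)$ directly and reproduces the characteristic-function control of $H_L$ from the i.i.d.\ irreducible pieces. Both are valid; the paper's version avoids re-deriving the effective walk at the cost of the midpoint split, while yours is more self-contained but needs the uniform-in-$m$ local CLT (a harmless strengthening). One small wrinkle in your description: under $\cP_L$ the pieces are not literally i.i.d.\ because of the constraint $\sum_i\ell_i=L$ and the boundary pieces; this is exactly what the DKS analysis takes care of, so it is not a gap, but the sentence ``under $\cP_L$ these pieces are i.i.d.'' should be qualified.

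For part (b) the arguments genuinely diverge. The paper again splits at $n$, writes $\bbP^{\rm reg}_{0,L}(\g_{\rm max}(L/2)\ge -j)=\hat\cP_n\otimes\hat\cP_n(h_n(\g_1)\ge -j\mid h_n(\g_1)=h_n(\g_2))$, and then repeats the Proposition~\ref{prop:2}(b) scheme: the Fourier ratio $\int|\hat f_n|/\int|\hat f_n|^2\le\sqrt2+\d$ together with a Berry--Esseen type bound $\hat\cP_n(h_n\ge -j)\le\tfrac12+\d$. Your argument is shorter: symmetry of $H^*$ under $\bbP_{0,L}(\cdot\mid R)$ plus the pointwise bound $\bbP_{0,L}(H^*=m\mid R)\le C'/\sqrt{\hat\s_\b^2L}$, which you obtain from the two-sided estimates of part~(a) and the near-multiplicativity $\hat\Xi_n\hat\Xi_{L-n}\asymp\Xi_L$. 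This bypasses the Berry--Esseen step entirely and is arguably cleaner; the paper's route, on the other hand, mirrors the random-walk proof of Proposition~\ref{prop:2} line by line, which is the stated pedagogical intent.
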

Before proving the lemma we recall some key results from
\cite{DKS}. For any $\g\in \cO_L$ let $h_L(\g)$ be the height of its final point. Let also $\s^2_L,f_L(t)$ be the
  variance of $h_L(\g)$ and the characteristic function of $h_L(\g)/\s_L$
  respectively in the
  ensemble $\cP_L$. 
\begin{lemma}[\cite{DKS}*{Sections 4.9,4.10,4.10.20,4.10.29}]
\label{lemma:DKS}
There exists $\b_0>0$ such that, for all $\b\geq \b_0$, the following
holds.
\begin{enumerate}[(a)]
\item There exists
$\hat\s(\b)$ with $|\hat \s^2(\b)-\frac{1}{\cosh(\b)-1}|\le
e^{-2(\b-\b_0)}$ and such that 
\[|\s_L^2 -\hat \s(\b)^2 L|\le
e^{-2(\b-\b_0)}\quad \forall L.
\]
\vskip 0.3cm
\item There exist two constants $\l$ and $\a\le \pi$ independent of $\b,L$ such that
\begin{enumerate}[(i)]
\item $\log(f_L(t))= -\frac{t^2}{2} + \frac{t^3}{6}\frac{R_L(t)}{\s_L^3}$
  for all $t\in \bbR$ 
  with $\sup_t |R_L(t)|\le \l \s_L^2\,$.
\item $|f_L(t)|\le e^{-t^2/4}$ for all $|t|\le \a\, \s_L$.
\item $|f_L(t)|\le (1-\frac{\a^2}{4e^\b})^{L}\le e^{-c(\a)\s_L^2}$ for
  all $|t|\in (\a\s_L,\pi\s_L]$.
\end{enumerate}
\vskip 0.3cm
\item $1/2\le\Xi_L/(e^{-\b}\Xi_{L/2}^2)\le 1$ for $L$ even and
  $1/2\le \Xi_L/\Xi_{\lceil L/2\rceil}^2\le 1$ for $L$ odd.
\end{enumerate}
Similar bounds hold for the quantities $\hat \s^2_L,\hat f_L(t)$ computed in the restricted ensemble
$\hat\cP_L$.\end{lemma}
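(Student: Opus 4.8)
The statement is, as indicated by the attribution, a condensed form of the Ornstein--Zernike analysis of the directed self-avoiding path ensemble carried out in \cite{DKS}*{Sec.~4.9--4.10}; the plan is to point to the mechanism that produces each of (a), (b), (c) and, for $\b\geq\b_0$, the quantitative (exponentially small in $\b$) control. The one substantive ingredient is the \emph{cone-point} (break-point) decomposition: for $\b$ large enough a convergent low-temperature cluster expansion shows that a path $\g\in\cO_L$ carries, with overwhelming probability, a positive density of points at which it splits into mutually compatible sub-paths, and that under $\cP_L$ these sub-paths form a renewal-type sequence whose inter-point spacings and transverse increments have exponential tails, with rate diverging as $\b\to\infty$. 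This is the only place where $\b\geq\b_0$ is genuinely needed; downstream of it the lemma is a transcription of the classical local central limit theorem for i.i.d.\ sums with finite exponential moments, plus an elementary (sub/super)multiplicativity estimate.

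For (a): writing $h_L(\g)=\sum_i\x_i+(\text{two incomplete end pieces})$ with $\x_i$ i.i.d.\ of exponential tail, the renewal central limit theorem gives $\s_L^2=\hat\s(\b)^2 L+b_L(\b)$ with $b_L(\b)$ bounded uniformly in $L$; and since the path ensemble coincides with the SOS walk of parameter $\b$ --- whose step variance is $\tfrac{2e^{-\b}}{(1-e^{-\b})^2}=\tfrac1{\cosh\b-1}$, cf.\ \eqref{sigmabeta} --- up to cluster-expansion corrections of order $e^{-2\b}$, both $b_L(\b)$ and $\hat\s(\b)^2-\tfrac1{\cosh\b-1}$ are $O(e^{-2\b})$, hence $\leq e^{-2(\b-\b_0)}$ after adjusting $\b_0$. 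For (b): factorising the characteristic function over the renewal pieces gives, up to a boundary factor, $f_L(t)=\hat\varphi(t/\s_L)^{N_L}$ with $\hat\varphi$ the characteristic function of one tilted increment and $N_L\approx L/\mu$ ($\mu=$ mean spacing); expanding $\log f_L$ around $-t^2/2$, with the remainder controlled by the higher cumulants of $h_L$ --- which are $O(L)=O(\s_L^2)$ --- yields (b)(i) with $\sup_t|R_L(t)|\leq\l\s_L^2$. Bound (b)(ii) on $|t|\leq\a\s_L$ is the quadratic control $|\hat\varphi(s)|\leq e^{-cs^2}$ for $|s|$ small, i.e.\ strict convexity of the increment log-moment-generating function; bound (b)(iii) on $\a\s_L<|t|\leq\pi\s_L$ is aperiodicity of the increment law, estimated exactly as in Proposition \ref{prop:2}(a): the single-vertical-edge move carries relative weight at least $e^{-\b}$, forcing $|\hat\varphi(s)|\leq 1-\tfrac{\a^2}{4e^\b}$ on that range and hence the stated $\bigl(1-\tfrac{\a^2}{4e^\b}\bigr)^L$ bound.

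For (c) one cuts $\g\in\cO_L$ along the central column: summing over the ways the path can cross it --- a convergent sum by the exponential tail of the transverse fluctuation there --- gives $\Xi_L\leq e^{-\b}\Xi_{L/2}^2$ for $L$ even (the factor $e^{-\b}$ being the weight of the edge shared by the two halves in the conventions of Definition \ref{contourdef}), and the analogue (without that factor, by the parity of the vertex lattice) with $\lceil L/2\rceil$ for $L$ odd; the matching lower bound comes from keeping only the paths that cross that column exactly once, a fraction bounded below by $1/2$ for $\b$ large by a Peierls estimate on the cost of extra crossings. Finally, the restricted ensemble $\hat\cP_L$ differs from $\cP_L$ only by forbidding intersections of $\g$ with the line $\cL_L$, which is a $\bigl(1+o_\b(1)\bigr)$-perturbation --- the Peierls argument behind Proposition \ref{prop:1} --- so the cone-point decomposition and all of (a)--(c) carry over to $\hat\s_L$, $\hat f_L$ with corrections again exponentially small in $\b$.

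The main obstacle is thus entirely in the first paragraph --- constructing the Ornstein--Zernike renewal decomposition of the path ensemble with its exponential tail bounds, uniformly in $L$ and with the $\b$-quantitative estimates --- which is precisely what \cite{DKS}*{Sec.~4.9--4.10} carries out; everything after that is routine.
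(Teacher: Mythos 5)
The paper offers no proof of Lemma \ref{lemma:DKS} at all: it is imported verbatim from \cite{DKS}*{Sections 4.9--4.10} as a citation, exactly as your final paragraph acknowledges, so your proposal — deferring the substantive cone-point/renewal construction to \cite{DKS} and sketching how (a) (via the identity $\tfrac{2e^{-\b}}{(1-e^{-\b})^2}=\tfrac1{\cosh\b-1}$), (b) (local-CLT-type bounds on $f_L$), and (c) (cutting/concatenating at the central column) follow from it — is consistent with the paper's treatment and with the cited source. In short, you take essentially the same route the paper does, namely relying on \cite{DKS} for this lemma.
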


\begin{proof}[Proof of Proposition \ref{prop:3}] Without loss of generality
  we assume that $L$ is odd and we let $n=\lceil L/2\rceil$.
 
Using (c) of  Lemma \ref{lemma:DKS} it is enough to prove
\eqref{eq:12} for the ratio $\cZ_{0,L}/\Xi_{n}^2$. Using
Proposition \ref{prop:1}, for any $\b$ large enough uniformly in $L$, $\cZ_{0,L}\le 2\cZ^{{\rm reg}}_{0,L}$ where 
$\cZ^{{\rm reg}}_{0,L}:=\sum_{\g\in \O^{{\rm reg}}_{0,L}}e^{-\b|\g|}$ and 
$\O^{{\rm reg}}_{0,L}\subset \O_{0,L}$ consists of those paths that are regular at $n$.
In turn the ratio $\cZ^{{\rm reg}}_{0,L}/\Xi_{n}^2$ is at most
the probability (in the $\hat\cP_{n }$ ensemble) that
two independent copies of $\g\in \hat\cO_{n}$ have the same final
height, a quantity which can be written (cf. \eqref{eq:20}) 
\begin{equation}\label{eqxx}
\hat\cP_{n}\otimes
\hat\cP_{n} (h_n(\g_1)=h_n(\g_2))= \frac{1}{2\pi\s_{n}}\int_{-\pi \s_{n}}^{\pi \s_{n}}dt\, |\hat f_{n}(t)|^2.
\end{equation}
The desired upper bound now follows at once from (a) and (b) of Lemma \ref{lemma:DKS}. 

Next we lower bound
$\cZ_{0,L}$ by $\cZ^{{\rm reg}}_{0,L}$. Any $\g\in \O^{{\rm reg}}_{0,L}$ can be seen as formed by 
two paths $\g_i$, $i=1,2$, the first one starting from the origin and running
forward and the second one starting from $(L,0)$ and
running backward, until they hit $\cL_n$ at the same
height. These two paths are i.i.d with law $\hat \cP_{n}(\g)$. Thus 
\[
\frac{ \cZ^{\rm reg}_{0,L}}{\Xi_{n}^2}= \hat \cP_{n}\otimes
\hat\cP_{n} (h_n(\g_1)=h_n(\g_2))\left(\frac{\hat\Xi_{n}}{\Xi_{n}}\right)^2.
\]
Using (b) of Lemma \ref{lemma:DKS}, \eqref{eqxx} is bounded from below by
$1/C_1\max\left(1,\sqrt{\sigma_{n}^2}\right)$ for some large
$C_1$ independent of $\b$ as in Proposition \ref{prop:2}. The ratio $\hat\Xi_{n}/\Xi_{n}$ is
greater than e.g. $1/2$ using \cite{DKS}*{Equations 4.8.5, 4.8.6}. The
proof of part (a) is complete.

As for part (b), 
using Proposition \ref{prop:1}, $\lim_{\b\to \infty}\bbP_{0,L}(\O^{{\rm reg}}_{0,L})=1$ uniformly in $L$.  Hence it is enough to prove the statement of the lemma for
$\bbP^{{\rm reg}}_{0,L}(\cdot):=\bbP_{0,L}(\cdot\mid \O_{0,L}^{\rm reg})$, with $3/4$
replaced by e.g. $3/5$. By writing as before $\g\in \O^{{\rm reg}}_{0,L}$
as the concatenation of two
independent paths $\g_1,\g_2$ we get that 
\[
\bbP^{{\rm reg}}_{0,L}(\g_{\rm max}(L/2)\geq -j)= \hat\cP_{n}\otimes
\hat\cP_{n} (h_n(\g_1)\geq -j \tc
h_n(\g_1)=h_n(\g_2)).
\]
At this stage we proceed exactly as in part (b) of Proposition
\ref{prop:2} (cf. \eqref{eq:10}). Using again Lemma \ref{lemma:DKS}, we get that,
for any small $\d$ and any $L\geq Ce^{\b}$ with $C$ large enough depending on $\d$,
\[
 \frac{\int_{-\pi \s_{n}}^{\pi\s_{n}} dt\  |\hat f_{n}(t)|}{\int_{-\pi \s_{n}}^{\pi\s_{n}}dt\
  |\hat f_{n}(t)|^2}\leq  \sqrt{2}+\d.
\]
As far as the quantity $\hat \cP_{n}(h(\g_1)\geq -j)$ is
concerned we can appeal to the following
formula (cf. e.g. \cite{Feller}*{Ch. XVI formula 3.13}):
\[
\max_{h}|\hat \cP_{n }(h(\g)\geq h\s_{n})-\Phi(-h)|\le \frac 1\pi \int_{-T}^T dt\
\frac 1t |\hat f_{n }(t)-e^{-t^2/2}| +\frac{24}{\sqrt{2\pi^3}T},\quad
\forall T>0,
\]
valid for all $h$. Choose now $T=\frac{48}{\sqrt{2\pi^3}\d}$ and a constant $C$ large
enough depending on $\d$ in a such a way that  $\int_{-T}^T dt\
\frac 1t |\hat f_{n }(t)-e^{-t^2/2}|\le \d/2$ for all $L\geq C e^{\b}$. Thus the r.h.s. above
is smaller than $\d$. Taking $h=-j/\si_n$ concludes the proof.
\end{proof}
We end with a simple lemma that allows one to bound canonical probabilities with their
grand canonical counterpart. We use the notation introduced so far.
\begin{lemma}
\label{lem:equiv}
Fix $k\in \bbN,\ L\geq k$ and let $A\subset \O_{0,L}$ depend only on the first
$k$ edges of the
path $\g$. If $\lim_{\b\to \infty}\sup_{L\geq k}\cP_L(A)=0$ then the
same holds for the canonical measure $\bbP_{0,L}$.
\end{lemma}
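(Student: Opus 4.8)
The plan is to compare $\bbP_{0,L}$ with $\cP_L$ by factorizing a path at its (a.s.\ unique, for $\b$ large) crossing of a median vertical line, in the spirit of the proof of Proposition~\ref{prop:3}(a). Recall $\bbP_{0,L}(\cdot)=\cP_L(\cdot\mid h_L(\g)=0)$, where $h_L(\g)$ is the height of the final vertex of $\g$. I would first dispose of small $L$: if $k\le L\le 2k$, then by Proposition~\ref{prop:3}(a) and the boundedness of $\hat\s^2_\b$ for $\b\ge\b_0$,
\[
\bbP_{0,L}(A)\le\frac{\Xi_L}{\cZ_{0,L}}\,\cP_L(A)\le C\max\big(1,\sqrt{\hat\s^2_\b L}\big)\,\cP_L(A)\le C_k\,\cP_L(A),
\]
with $C_k$ depending only on $k$. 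For $L>2k$ I would set $n=\lceil L/2\rceil\ge k+1$ and condition on $\g$ being regular at $\cL_n$, denoting by $\bbP^{\rm reg}_{0,L}$ and $\cP^{\rm reg}_L$ the resulting conditioned measures (of $\bbP_{0,L}$ and of $\cP_L$). As in the proof of Proposition~\ref{prop:3}(a), a regular path splits as $\g=\g_1\circ\g_2$, a forward half from the origin and a backward half from the right endpoint, glued at the crossing of $\cL_n$; under $\cP^{\rm reg}_L$ the two halves are i.i.d.\ with law $\hat\cP_n$, while under $\bbP^{\rm reg}_{0,L}$ they are i.i.d.\ with law $\hat\cP_n$ conditioned on $\{h_n(\g_1)=h_n(\g_2)\}$, $h_n(\cdot)$ denoting the height of a half at $\cL_n$. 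Since $n\ge k+1$, the half $\g_1$ has at least $k$ edges, so $A$ — depending only on the first $k$ edges of $\g$ — is a function of $\g_1$ alone. With $q_n(h):=\hat\cP_n(h_n(\g)=h)$ this gives
\[
\cP^{\rm reg}_L(A)=\hat\cP_n(A),\qquad
\bbP^{\rm reg}_{0,L}(A)=\frac{\sum_h\hat\cP_n\big(A\cap\{h_n(\g)=h\}\big)\,q_n(h)}{\sum_h q_n(h)^2}\le\frac{\big(\max_h q_n(h)\big)\,\hat\cP_n(A)}{\sum_h q_n(h)^2}.
\]

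The crux is the estimate
\begin{equation}\label{eq:Bmain}
\max_h q_n(h)\le C\sum_h q_n(h)^2,
\end{equation}
with $C$ independent of $\b\ge\b_0$ and $n$. I would obtain it by Fourier inversion and Parseval, as in \eqref{eq:20}: $\max_h q_n(h)\le\tfrac1{2\pi\hat\s_n}\int_{-\pi\hat\s_n}^{\pi\hat\s_n}|\hat f_n(s)|\,ds$ and $\sum_h q_n(h)^2=\tfrac1{2\pi\hat\s_n}\int_{-\pi\hat\s_n}^{\pi\hat\s_n}|\hat f_n(s)|^2\,ds$, where $\hat f_n$ and $\hat\s_n$ are the characteristic function and the standard deviation of $h_n(\g)$ in the $\hat\cP_n$ ensemble; both integrals are of order $(\hat\s_n\vee1)^{-1}$ by the bounds of Lemma~\ref{lemma:DKS}(a)--(b), treated exactly as in the proof of Proposition~\ref{prop:2}(a) (distinguishing $\hat\s_n$ large from $\hat\s_n$ small). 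Granting \eqref{eq:Bmain}, $\bbP^{\rm reg}_{0,L}(A)\le C\,\hat\cP_n(A)=C\,\cP^{\rm reg}_L(A)$. To remove the conditioning, I would use
\[
\bbP_{0,L}(A)\le\bbP^{\rm reg}_{0,L}(A)+\bbP_{0,L}\big(\g\text{ not regular at }\cL_n\big)\le C\,\cP^{\rm reg}_L(A)+\e_\b,
\]
where $\e_\b:=\sup_{L,u}\bbP_{0,L}(\g\text{ not regular at }\cL_u)\to0$ as $\b\to\infty$ by Proposition~\ref{prop:1}; and $\cP^{\rm reg}_L(A)=\cP_L(A\mid\g\text{ regular at }\cL_n)\le\cP_L(A)/\cP_L(\g\text{ regular at }\cL_n)\le2\,\cP_L(A)$ for $\b$ large, uniformly in $L$, by the grand-canonical analogue of Proposition~\ref{prop:1} (which follows from it on summing over the terminal height of $\g$, whose large values are suppressed by a Peierls bound). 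Combining with the small-$L$ bound, one gets, for $\b$ large, $\sup_{L\ge k}\bbP_{0,L}(A)\le C'\sup_{L\ge k}\cP_L(A)+\e_\b$, whose limit as $\b\to\infty$ is $0$ by hypothesis.

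I expect \eqref{eq:Bmain} — that the supremum of the height law of $\hat\cP_n$ at $\cL_n$ is comparable to the square of its $\ell^2$-norm — to be the only point needing real care, but it should fall directly out of the local-CLT character-function estimates already recorded in Lemma~\ref{lemma:DKS} (both sides being $\Theta((\hat\s_n\vee1)^{-1})$). Everything else is routine: the i.i.d.\ factorization of $\cP^{\rm reg}_L$ and $\bbP^{\rm reg}_{0,L}$ at $\cL_n$ is exactly the one in the proof of Proposition~\ref{prop:3}(a); the non-regular event is controlled by Proposition~\ref{prop:1}; and the small-$L$ case is covered by the crude bound from Proposition~\ref{prop:3}(a). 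The one bookkeeping point to verify is that the factorization really yields i.i.d.\ halves under $\cP^{\rm reg}_L$ and i.i.d.\ halves conditioned to match heights at $\cL_n$ under $\bbP^{\rm reg}_{0,L}$, which is immediate from the definitions of the two ensembles.
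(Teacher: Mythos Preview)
Your proof is correct and follows essentially the same line as the paper's: both factor a regular path at the median line $\cL_n$ into two independent $\hat\cP_n$-distributed halves and bound $\bbP^{\rm reg}_{0,L}(A)$ by a constant times $\hat\cP_n(A)$ via the Fourier ratio $\int|\hat f_n|\big/\int|\hat f_n|^2$ (your inequality $\max_h q_n(h)\le C\sum_h q_n(h)^2$ is exactly this ratio). The only cosmetic difference is the small-$L$ cutoff: the paper takes $L\le Ce^{\b}$ and argues directly that both measures concentrate on paths that are flat for the first $k$ steps, whereas you take $L\le 2k$ and use the partition-function ratio from Proposition~\ref{prop:3}(a), which in turn forces you to verify the Fourier-ratio bound uniformly over all $n$ (including small $\hat\s_n$) rather than only for $n\ge Ce^{\b}$.
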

\begin{proof}
Fix a large constant $C$. If $L\le Ce^{\b}$ then it is immediate to
check that, as $\b\to \infty$, the measure $\bbP_{0,L}$ is
concentrated over paths which are flat for the first $k$ steps as is
the case for the grand canonical measure, so the two marginals on the
first $k$ steps are essentially identical. If
instead $L\geq Ce^{\b}$ we argue as in part (b) of Proposition
\ref{prop:3} 
to write (with $n=\lfloor L\rfloor$)
\begin{align*}
\lim_{\b\to \infty}\sup_{L\geq k}\bbP_{0,L}(A)&=\lim_{\b\to
  \infty}\sup_{L\geq k}\bbP^{\rm
  reg}_{0,L}(A)=\lim_{\b\to \infty}\sup_{2n\geq k}\hat\cP_n\otimes\hat\cP_n\left(A\mid
  h_n(\g_1)=h_n(\g_2)\right)\\
&\leq  \lim_{\b\to \infty}\sup_{2n\geq k}\frac{\int_{-\pi \s_{n}}^{\pi\s_{n}} dt\  |\hat f_{n}(t)|}{\int_{-\pi \s_{n}}^{\pi\s_{n}}dt\
  |\hat f_{n}(t)|^2}\,\hat\cP_n(A)\le (\sqrt{2}+\d) \lim_{\b\to \infty}\sup_{L\geq k}\,\hat \cP_L(A),
  \end{align*}
where $\d=\d(C)$ tends to zero as $C\to +\infty$. Finally we observe
that $$\lim_{\b\to \infty}\sup_{L\geq k}\hat \cP_L(A)=\lim_{\b\to \infty}\sup_{L\geq k}\cP_L(A)=0.$$

\end{proof}
\begin{bibdiv}
\begin{biblist}

%
%

\bib{Bargman}{article}{
    AUTHOR = {Bargmann, Valentine},
     TITLE = {On the number of bound states in a central field
of force},
   JOURNAL = {Proc Natl Acad Sci U S A.},
    VOLUME = {38},
      YEAR = {1952},
    NUMBER = {11},
     PAGES = {961--966},
}

\bib{CV}{article}
{
    AUTHOR = {Caputo, P.},
    AUTHOR={Velenik, Y.},
     TITLE = {A note on wetting transition for gradient fields},
   JOURNAL = {Stochastic Process. Appl.},
      VOLUME = {87},
      date = {2000},
    NUMBER = {1},
     PAGES = {107--113},
}

\bib{CLMST}{article}
 {
  author = {Caputo, Pietro},
  author = { Lubetzky, Eyal},
  author={ Martinelli, Fabio},
  author={ Sly, Allan},
  author={Toninelli, Fabio Lucio},
  title =		 {Scaling limit and cube-root fluctuations in SOS surfaces above a wall},
  year =		 {2013},
  note =		 {To appear on J. Eur. Math. Soc., preprint \texttt{arXiv:1302.6941}},
}

\bib{CMT}{article}
 { 
  author =		 {Caputo, Pietro},
  author={ Martinelli, Fabio},
  author={Toninelli, Fabio Lucio},
 title =		 {On the probability of staying above a wall for the (2+1)-dimensional SOS model at low temperature},
  year =		 {2014},
  note =		 {preprint \texttt{arXiv:1406.1206}},
}

\bib{Caravennaet}{article}{
AUTHOR = {Caravenna, F.},
AUTHOR={P\'etr\'elis N.},
TITLE={A polymer in a multi-interface medium},
JOURNAL={Ann. Appl. Probab.},
VOLUME={19},
YEAR={2009}, 
PAGES={1803-1839},
}

\bib{CGZ}{article}{
    AUTHOR = {Caravenna, F.},
    AUTHOR={Giacomin, G.},
    AUTHOR={Zambotti, Lorenzo},
     TITLE = {Sharp asymptotic behavior for wetting models in
              {$(1+1)$}-dimension},
   JOURNAL = {Electron. J. Probab.},
    VOLUME = {11},
      YEAR = {2006},
     PAGES = {no. 14, 345--362 (electronic)},
}

\bib{DGZ}{article}{
    AUTHOR = {Deuschel, Jean-Dominique},
    AUTHOR={Giacomin, G.},
    AUTHOR={Zambotti, Lorenzo}, 
         TITLE = {Scaling limits of equilibrium wetting models in
              {$(1+1)$}-dimension},
   JOURNAL = {Probab. Theory Related Fields},
    VOLUME = {132},
      YEAR = {2005},
    NUMBER = {4},
     PAGES = {471--500},
}

\bib{DKS}{book}{
   author={Dobrushin, R.},
   author={Koteck{\'y}, R.},
   author={Shlosman, S.},
   title={Wulff construction. A global shape from local interaction},
   series={Translations of Mathematical Monographs},
   volume={104},
   publisher={American Mathematical Society},
   place={Providence, RI},
   date={1992},
   pages={x+204},
}

\bib{Doney}{article}{
   AUTHOR = {Doney, R. A.},
     TITLE = {Local behaviour of first passage probabilities},
   JOURNAL = {Probab. Theory Related Fields},
     VOLUME = {152},
      YEAR = {2012},
    NUMBER = {3-4},
     PAGES = {559--588},
 }
 
 \bib{Feller}{book}{
   author={Feller, W.},
   title={An introduction to probability theory and its applications, Vol. II},
 publisher={Second edition. John Wiley \& Sons Inc.},
   place={New York},
   date={1971},
}

\bib{Fisher}{article}{
   author={Fisher, Michael E.},
   title={Walks, walls, wetting, and melting},
   journal={J. Statist. Phys.},
   volume={34},
   date={1984},
   number={5-6},
   pages={667--729},
}

\bib{Giacomin}{book}{
   author={Giacomin, Giambattista},
   title={Random polymer models},
   publisher={Imperial College Press},
   place={London},
   date={2007},
   pages={xvi+242},
}

\bib{IST}{article}
 { 
  author =		 {Ioffe, Dima},
  author={Shlosman, Senya},
  author={Toninelli, Fabio Lucio},
 title =		 {Interaction versus entropic repulsion for low temperature Ising polymers},
  year =		 {2014},
  note =		 {preprint \texttt{arXiv:1407.3592}},
}

\bib{IsoYos}{article}{
    AUTHOR = {Isozaki, Yasuki},
    AUTHOR={ Yoshida, Nobuo},
     TITLE = {Weakly pinned random walk on the wall: pathwise descriptions
              of the phase transition},
   JOURNAL = {Stochastic Process. Appl.},
    VOLUME = {96},
      YEAR = {2001},
    NUMBER = {2},
     PAGES = {261--284},
}

\bib{Jost}{article}{
    AUTHOR = {Jost, Res},
AUTHOR={Pais, Abraham},
     TITLE = {On the Scattering of a Particle by a Static Potential},
   JOURNAL = {Phys. Rev.},
    VOLUME = {82},
      YEAR = {1951},
     PAGES = {840},
}

\bib{Sohier}{article}{
    AUTHOR = {Sohier, Julien},
     TITLE = {The scaling limits of a heavy tailed {M}arkov renewal process},
   JOURNAL = {Ann. Inst. Henri Poincar\'e Probab. Stat.},
    VOLUME = {49},
      YEAR = {2013},
    NUMBER = {2},
     PAGES = {483--505},
    }
\bib{Solomyak}{article}{
    AUTHOR = {Solomyak, Michael},
     TITLE = {On a class of spectral problems on the half-line and their
              applications to multi-dimensional problems},
   JOURNAL = {J. Spectr. Theory},
    VOLUME = {3},
      YEAR = {2013},
    NUMBER = {2},
     PAGES = {215--235},
}

\end{biblist}
\end{bibdiv}

\end{document}